 \newcommand{\be}{\begin{equation}}
\newcommand{\ee}{\end{equation}}
\newcommand{\bea}{\begin{eqnarray}}
\newcommand{\eea}{\end{eqnarray}}
\newcommand{\bean}{\begin{eqnarray*}}
\newcommand{\eean}{\end{eqnarray*}}
\newtheorem{theorem}{Theorem}[section]
\newtheorem{corollary}[theorem]{Corollary}
\newtheorem{lemma}[theorem]{Lemma}
\newtheorem{proposition}[theorem]{Proposition}
\newtheorem{claim}[theorem]{Claim}
\theoremstyle{definition}
\newtheorem{prop}[theorem]{Proposition}
\newcommand\remove[1]{{}}
\def\B{{\mathcal B}}
\def\G{{\mathcal G}}
\def\vG{{\vec{\mathcal G}}}
\def\O{{\mathcal O}}
\def\vO{{\vec{\mathcal O}}}
\def\vE{\vec E}
\def\po{\operatorname{\mathbf Po}}
\def\ex{\operatorname{\mathbb E}}
\def\Pr{\operatorname{\mathbb P}}
\def\Bin{\operatorname{\bf Bin}}   
\def\bfd{{\dvec}}
\def\bft{{\bf t}}
\def\bfs{{\bf s}}
\def\bfg{{\bf g}}
\def\bfx{{\bf x}}
\def\pr{{\mathbb P}}
\def\eps{\varepsilon}
\def\dfrac#1#2{\lower0.15ex\hbox{\large$\textstyle\frac{#1}{#2}$}}
\def\({\bigl(}
\def\){\bigr)}
\def\st{\mathrel{:}}
\def\P{\mathcal{P}}
\def\M{\mathcal{M}}
\def\calH{{\mathcal H}}
\def\Z{\mathcal{Z}}
\def\calH{\mathcal{H}}
\def\calB{\mathcal{B}}
\def\calB{\mathcal{B}}
\def\dvec{\boldsymbol{d}}
\def\Cov{\operatorname{Cov}}
\def\plus{+}
\def\var{{\bf Var}}
\newcommand{\ind}[1]{{\boldsymbol 1}_{\{{#1}\}}}
\newcommand{\eqn}[1]{\eqref{#1}}
\let\le=\leqslant
\let\leq=\leqslant
\let\ge=\geqslant
\numberwithin{equation}{section}
\title{Sandwiching between random regular graphs and Erd\H{o}s-R\'{e}nyi graphs: configuration model and unions of perfect matchings}
\author{Pu Gao \\ University of Waterloo \\ pu.gao@uwaterloo.ca \and Mikhail Isaev \\ University of New South Wales\\ m.isaev@unsw.edu.au \and Xavier Perez-Gimenez\thanks{Research supported in part by NSF grant DMS2201590.}  \\ University of Nebraska–Lincoln
\\ xperez@unl.edu}
\date{}
\begin{document}

\maketitle

\begin{abstract}
    We establish new couplings among several random graph and multigraph models related to the random regular graph $\G(n,d)$, including the configuration model and unions of random perfect matchings. As a main result, we verify the Kim–Vu sandwich conjecture for all large degrees $d=n-O(\log^4 n)$ and prove a weakened version for $d=O(\log^4 n)$, which are the only remaining open cases. Our approach introduces a  coupling framework that links $\G(n,d)$ and $\G(n,p)$ through a chain of intermediate models.
\end{abstract}

\section{Introduction}

The random regular graph model $\G(n,d)$ is obtained by sampling a $d$-regular graph on the vertex set $[n]$ uniformly at random. (We assume throughout the paper that $dn$ is always even.) In contrast to the Erd\H{os}-R\'{e}nyi graph $\G(n,p)$, the analysis of $\G(n,d)$ is considerably more challenging and relies on combinatorial enumeration techniques developed for graphs with a specified degree sequence. In the study of $\G(n,d)$, several random graph models have been proposed to approximate or simulate its properties. Among these, the most prominent is the configuration model introduced by Bollob\'{a}s~\cite{bollobas1980probabilistic}. In this model,
each vertex $i\in [n]$ is represented by a bin containing $d$ points. The configuration model $\P(n,d)$ is obtained by taking a uniform random perfect matching over the total $nd$ points, producing what is called a pairing, and two points that are matched to each other are called a pair in the pairing. Each pairing corresponds to a $d$-regular multigraph, obtained by contracting each bin into a single vertex: every matched pair of points $p,q$ gives rise to an edge $uv$ whenever $p$ belongs to vertex $u$ and $q$ belongs to vertex $v$. Let $G[P]$  denote the $d$-regular multigraph produced by pairing $P$. Note that $G[P]$ may contain both loops and multiple edges. In general, each $d$-regular multigraph may be obtained from a different number of pairings. However, an easy counting argument shows that every simple $d$-regular graph (i.e.~with no loops or multiple edges) corresponds to the same number of pairings. As a result, we obtain the following well-known result (see~e.g.~\cite{wormald1999models}) which we state below as Proposition~\ref{prop:1} after introducing some notation.

Let $\G_n$ and $\calH_n$ be sequences of probability measures on the same sequence of sample spaces $\Omega_n$. We say that $\G_n$ is contiguous with respect to $\calH_n$ (denoted by $\G_n \vartriangleleft \calH_n$) if for every sequence of events $A_n$, $\pr_{\calH_n}(A_n)\to 0$ implies that $\pr_{\G_n}(A_n)\to 0$. We say that $\G_n$ and $\calH_n$ are mutually contiguous if $\G_n \vartriangleleft \calH_n$ and $\calH_n \vartriangleleft \G_n$. We say $\G_n$ is equivalent to $\calH_n$ if $\pr_{\G_n}(A_n)=\pr_{\calH_n}(A_n)$ for all $A_n$, and we say $\G_n$ is asymptotically equivalent to $\calH_n$ if $\pr_{\G_n}(A_n)=\pr_{\calH_n}(A_n)+o(1)$ for all $A_n$. All asymptotics here and throughout the paper are taken with respect to $n$.

\begin{prop}[\cite{wormald1999models}]\label{prop:1}
$\G(n,d)$ is equivalent to $G[P]$ for $P\sim \P(n,d)$ conditional on the event that $G[P]$ is a simple graph.
\end{prop}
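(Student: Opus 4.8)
The plan is to reduce the statement to a single counting fact: for every simple $d$-regular graph $G$ on the vertex set $[n]$, the number of pairings $P$ on the $nd$ points with $G[P]=G$ is exactly $(d!)^n$, and in particular does not depend on $G$. Granting this, since each of the $(nd-1)!!$ pairings is equally likely, for $P\sim\P(n,d)$ and any simple $d$-regular $G$ on $[n]$ we get
\[
\pr\big(G[P]=G \mid G[P]\text{ is simple}\big)=\frac{(d!)^n}{\sum_{G'}(d!)^n}=\frac{1}{\#\{\text{simple $d$-regular graphs on }[n]\}},
\]
the sum ranging over all simple $d$-regular graphs $G'$ on $[n]$; the right-hand side is precisely $\pr_{\G(n,d)}(\{G\})$, so the two measures coincide on singletons and hence on all events, which is the claimed equivalence.

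To establish the counting fact I would exhibit an explicit bijection. Fix a simple $d$-regular graph $G$, write $B_u$ for the bin of $d$ points attached to vertex $u$ and $N(u)$ for the neighbourhood of $u$ in $G$, so $|N(u)|=d$. From a pairing $P$ with $G[P]=G$, define maps $\sigma_u\colon B_u\to N(u)$ by letting $\sigma_u(p)$ be the vertex whose bin contains the point matched with $p$; because $G$ is simple and $d$-regular, the $d$ points of $B_u$ are matched into $d$ \emph{distinct} bins, which are exactly the bins of $N(u)$, so each $\sigma_u$ is a bijection. Conversely, from any family $(\sigma_u)_{u\in[n]}$ of bijections $\sigma_u\colon B_u\to N(u)$, build a matching by pairing, for each edge $uv\in E(G)$, the unique $p\in B_u$ with $\sigma_u(p)=v$ with the unique $q\in B_v$ with $\sigma_v(q)=u$; every point gets matched exactly once (each $\sigma_u$ is onto $N(u)$ and $G$ has no loops), so this is a genuine pairing $P$, and $G[P]=G$ by construction. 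These two maps are mutually inverse, so the pairings realising $G$ are in bijection with the families $(\sigma_u)_u$, of which there are $\prod_{u\in[n]}|B_u|!=(d!)^n$.

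This is essentially a bookkeeping argument, so I do not expect a real obstacle. The one place that needs care is checking that the correspondence $P\leftrightarrow(\sigma_u)_u$ is well defined in both directions, and in particular that it is \emph{simplicity} of $G$ that forces each $\sigma_u$ to be a bijection rather than merely a function: if $G$ had a loop at $u$ or a multiple edge incident to $u$, the points of $B_u$ would fail to be matched into $d$ distinct neighbouring bins and the count $(d!)^n$ would break down. Indeed, the number of pairings realising a given $d$-regular multigraph depends on its loop and multi-edge structure, which is exactly the reason the conditioning on $G[P]$ being simple is imposed.
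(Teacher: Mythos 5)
Your proposal is correct and is exactly the standard "easy counting argument" the paper invokes (via Wormald's survey): every simple $d$-regular graph arises from precisely $(d!)^n$ pairings, so conditioning the uniform pairing on simplicity yields the uniform distribution on simple $d$-regular graphs. The bijection with families of bijections $\sigma_u\colon B_u\to N(u)$ is the usual way to see this count, so there is nothing to add.
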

When $d=d(n)\to\infty$, the event that $G[P]$ is a simple graph has probability $o(1)$. However, for constant $d$, it follows from the analysis of the limiting distribution of the number of loops and double edges that $G[P]$ is simple with probability $\Omega(1)$. As a result, we obtain the following well-known fact as an immediate corollary of Proposition~\ref{prop:1}.
\begin{prop}[\cite{wormald1999models}] \label{p:configuration}
For every fixed $d\ge 1$, $\G(n,d) $ is contiguous with respect to $G[P]$ for $P\sim \P(n,d)$. 
\end{prop}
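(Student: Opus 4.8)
The plan is to deduce Proposition~\ref{p:configuration} from Proposition~\ref{prop:1} by showing that, for fixed $d\ge 1$, the conditioning event $\mathrm{Simple}$ that $G[P]$ has no loop and no multiple edge satisfies $\Pr_{\P(n,d)}(\mathrm{Simple}) = \Omega(1)$, i.e.\ it is bounded away from $0$ as $n\to\infty$. Once this is established, contiguity is immediate: if $A_n$ is a sequence of events with $\Pr_{G[P]}(A_n)\to 0$, then
\[
\Pr_{\G(n,d)}(A_n) = \Pr_{G[P]}\big(A_n \mid \mathrm{Simple}\big) = \frac{\Pr_{G[P]}(A_n\cap \mathrm{Simple})}{\Pr_{G[P]}(\mathrm{Simple})} \le \frac{\Pr_{G[P]}(A_n)}{\Pr_{G[P]}(\mathrm{Simple})} \to 0,
\]
using Proposition~\ref{prop:1} for the first equality and the uniform lower bound $\Pr_{G[P]}(\mathrm{Simple})=\Omega(1)$ for the last step. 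So the entire content reduces to a lower bound on the probability of simplicity in the configuration model.

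To prove $\Pr(\mathrm{Simple})=\Omega(1)$, I would use the standard method of moments argument for the joint distribution of the number of loops $L$ and the number of double edges $D$ in $G[P]$ for $P\sim\P(n,d)$. The classical result (due to Bollob\'as, and described in Wormald's survey~\cite{wormald1999models}) is that as $n\to\infty$, $(L,D)$ converges jointly in distribution to a pair of independent Poisson random variables with means $\lambda_1 = (d-1)/2$ and $\lambda_2 = (d-1)^2/4$ respectively. The key steps are: (i) compute the falling-factorial moments $\E\big[(L)_a (D)_b\big]$ by counting ordered $a$-tuples of distinct point-pairs forming loops and ordered $b$-tuples of distinct pairs-of-pairs forming double edges, dividing by the total number $M(nd) = (nd-1)!! $ of perfect matchings on $nd$ points, and checking these converge to $\lambda_1^a \lambda_2^b$; (ii) invoke the standard moment-convergence criterion for joint convergence to independent Poisson variables; and (iii) conclude $\Pr(\mathrm{Simple}) = \Pr(L=0, D=0) \to e^{-\lambda_1 - \lambda_2} = e^{-(d-1)/2 - (d-1)^2/4} > 0$. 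Since this limit is a positive constant depending only on $d$, we get $\Pr(\mathrm{Simple})=\Omega(1)$ as required.

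The step I expect to be the main technical obstacle is the moment computation in (i): one must carefully enumerate configurations of $a$ loops together with $b$ double edges while tracking that the points involved are all distinct, and show that the leading-order contribution factorizes asymptotically as $\lambda_1^a\lambda_2^b$ with lower-order terms being $O(1/n)$. This is entirely routine and well-documented, so in the actual write-up I would simply cite~\cite{wormald1999models} (or Bollob\'as's original work) for the limiting joint Poisson distribution of $(L,D)$ and immediately extract the constant lower bound on $\Pr(\mathrm{Simple})$, then combine it with Proposition~\ref{prop:1} via the conditional-probability inequality displayed above to conclude contiguity.
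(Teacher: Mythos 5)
Your argument is correct and is essentially the paper's own: the paper derives Proposition~\ref{p:configuration} as an immediate corollary of Proposition~\ref{prop:1} together with the fact that, for fixed $d$, the joint Poisson limit for loops and double edges gives $\Pr_{\P(n,d)}(G[P]\ \text{simple})=\Omega(1)$, which is exactly the conditional-probability argument you spell out.
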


We can analogously define the loopless configuration model $\P_*(n,d)$ as $\P(n,d)$ conditional on the event that no two points belonging to the same vertex are matched (i.e.~$G[P]$ has no loops for $P\sim \P(n,d)$). Obviously, Propositions~\ref{prop:1} and~\ref{p:configuration} hold as well if we replace $\P(n,d)$ by $\P_*(n,p)$.

We can yet define another random multigraph model from the superposition of perfect matchings. Given two (multi)graphs $G$ and $H$ defined on the same set of vertices, let $G+H$ denote the superposition of $G$ and $H$, i.e., for every pair of vertices $u,v$, the multiplicity of $uv$ in $G+H$ is equal to the sum of its multiplicities in $G$ and $H$ (the multiplicity of $uv$ is defined to be 0 if $u$ and $v$ are not adjacent). If $G$ and $H$ are simple graphs, let $G\cup H$ denote the union of $G$ and $H$ (that is, the union of their edge sets), and hence $G\cup H$ is again a simple graph.
Suppose that $n$ is even. Let $\M_d^\plus$ denote the random multigraph obtained by taking the superposition of $d$ independent uniformly random perfect matchings on $[n]$, and let $\M_d^\oplus$ denote $\M_d^\plus$ conditional on being simple. Let $\M_d^{\cup}$ denote the simple graph obtained by taking the union of $d$ independent uniformly random perfect matchings on $[n]$. In other words, $\M_d^{\cup}$ is obtained from $\M_d^{\plus}$ after replacing all the multiple edges by simple edges. The following proposition shows the relationship between $\G(n,d)$ and $\M_d^\oplus$ when $d$ is constant.
\begin{prop}[\cite{wormald1999models}]\label{p:matching}
 For every fixed $d\ge 3$, $\G(n,d) $ and $\M_d^\oplus$ are mutually contiguous. 
\end{prop}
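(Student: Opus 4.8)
\medskip\noindent\emph{Proof idea.} The plan is to realize $\M_d^\oplus$ as a size-biased copy of $\G(n,d)$ and then run the small subgraph conditioning method of Robinson and Wormald, in the general form due to Janson. For a simple $d$-regular graph $G$ on $[n]$, let $\phi(G)$ denote the number of ordered $d$-tuples $(M_1,\dots,M_d)$ of perfect matchings of $[n]$ whose edge sets partition $E(G)$ --- equivalently, the number of proper $d$-edge-colourings of $G$. Since the $M_i$ are independent and uniform, $\Pr(\M_d^\plus=G)=\phi(G)/((n-1)!!)^{d}$ for every such $G$; summing over $G$ gives $\Pr(\M_d^\plus\text{ is simple})=N\,\E_{\G(n,d)}[\phi]/((n-1)!!)^{d}$, where $N$ is the number of simple $d$-regular graphs on $[n]$ (and this probability tends to $e^{-d(d-1)/4}=\Omega(1)$ by the usual Poisson approximation for repeated edges, so $\E_{\G(n,d)}[\phi]>0$). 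Dividing the two identities,
\[
\Pr\!\left(\M_d^\oplus=G\right)=\frac{\phi(G)}{\E_{\G(n,d)}[\phi]}\,\Pr\!\left(\G(n,d)=G\right),
\]
so $\M_d^\oplus$ is exactly $\G(n,d)$ reweighted by $Y_n:=\phi(G_n)$ with $G_n\sim\G(n,d)$.

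I would then apply the small subgraph conditioning method with base measure $\G(n,d)$, weight $Y_n$, and, for each $k\ge3$, the statistic $X_{k,n}$ equal to the number of $k$-cycles of $G_n$. Four conditions must be verified. (i) $(X_{k,n})_{k\ge3}$ converges jointly to independent Poissons $Z_k\sim\po(\lambda_k)$ with $\lambda_k=(d-1)^k/(2k)$ --- Bollob\'as's classical result. (ii) For all finite $(j_k)_{k\ge3}$, $\E[Y_n\prod_k(X_{k,n})_{j_k}]/\E[Y_n]\to\prod_k(\lambda_k(1+\delta_k))^{j_k}$ with $\delta_k=(-1)^k(d-1)^{1-k}$. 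By the reweighting identity the left-hand side equals $\E_{\M_d^\oplus}[\prod_k(X_{k,n})_{j_k}]$, so (ii) is the statement that the $k$-cycle counts of a union of $d$ random perfect matchings are asymptotically independent Poisson with means $\mu_k=\lambda_k(1+\delta_k)=\big((d-1)^k+(-1)^k(d-1)\big)/(2k)$; the numerator is the chromatic polynomial of $C_k$ evaluated at $d$, which appears because requiring that the two cycle-edges at each vertex lie in distinct matchings is precisely a proper colouring of $C_k$ by the $d$ matchings, and this is what produces the correction $\delta_k$. (iii) $\sum_{k\ge3}\lambda_k\delta_k^2<\infty$: here $\lambda_k\delta_k^2=\big(2k(d-1)^{k-2}\big)^{-1}$, whose sum over $k\ge3$ converges precisely when $d\ge3$ (for $d=2$ it is $\sum_{k\ge3}1/(2k)=\infty$); likewise $1+\delta_k>0$ for every $k\ge3$ precisely when $d\ge3$. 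This is exactly where the hypothesis $d\ge3$ enters. (iv) $\E[Y_n^2]/\E[Y_n]^2\to\exp\big(\sum_{k\ge3}\lambda_k\delta_k^2\big)$.

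Granting (i)--(iv), the method outputs $Y_n/\E[Y_n]\xrightarrow{d}W:=\prod_{k\ge3}(1+\delta_k)^{Z_k}e^{-\lambda_k\delta_k}$, a random variable with $\E W=1$ that is strictly positive almost surely (as every $1+\delta_k>0$), while (iv) also gives $\E[Y_n^2]=O(\E[Y_n]^2)$. Two short standard steps then conclude. First, Cauchy--Schwarz gives $\Pr_{\M_d^\oplus}(A_n)=\E_{\G(n,d)}\!\big[(Y_n/\E Y_n)\,\mathbf 1_{A_n}\big]=O\big(\Pr_{\G(n,d)}(A_n)^{1/2}\big)$, so $\M_d^\oplus\vartriangleleft\G(n,d)$. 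Second, suppose $\Pr_{\M_d^\oplus}(A_n)\to0$ but $\Pr_{\G(n,d)}(A_n)\ge\varepsilon>0$ along a subsequence; then for any continuity point $t$ of the law of $W$, $\Pr_{\M_d^\oplus}(A_n)\ge t\big(\Pr_{\G(n,d)}(A_n)-\Pr_{\G(n,d)}(Y_n/\E Y_n<t)\big)\to t(\varepsilon-\Pr(W<t))$, which is positive for small $t$ (using $W>0$ a.s.), a contradiction; hence $\G(n,d)\vartriangleleft\M_d^\oplus$. Thus $\G(n,d)$ and $\M_d^\oplus$ are mutually contiguous.

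All of the above is soft except the enumeration behind (ii) and especially (iv): the asymptotics of $\E_{\G(n,d)}[\phi]$ and $\E_{\G(n,d)}[\phi^2]$ --- counting single, respectively pairs of, ordered $d$-tuples of perfect matchings with a prescribed simple union --- together with their joint factorial moments with short cycles. I would carry this out in the configuration model $\P(n,d)$, where tuples of matchings are tractable and where passing from pairings to simple outcomes costs only a $\Theta(1)$ factor for fixed $d$. The expected main obstacle is the second-moment calculation in (iv): showing that $\E[Y_n^2]/\E[Y_n]^2$ converges to exactly $\exp\big(\sum_{k\ge3}(2k(d-1)^{k-2})^{-1}\big)$ rather than to a strictly larger constant. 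This delicate computation is essentially the one already carried out in the literature on $1$-factorizations of random regular graphs (Molloy, Robalewska, Robinson and Wormald; Janson).
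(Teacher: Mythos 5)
Your proposal is correct and follows essentially the same route as the paper, which simply cites this result from Wormald's survey, where it is proved exactly by the small subgraph conditioning method you describe (size-biasing $\G(n,d)$ by the number of ordered $1$-factorizations, short-cycle statistics with $\lambda_k=(d-1)^k/2k$ and $\delta_k=(-1)^k(d-1)^{1-k}$, and the second-moment asymptotics of Molloy--Robalewska--Robinson--Wormald and Janson). The identification of where $d\ge 3$ enters and the two soft contiguity deductions at the end are also the standard ones, so nothing further is needed beyond the deferred second-moment computation you correctly attribute to the literature.
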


Finally, Kim and Vu proposed to approximate $\G(n,d)$ by the Erd\H{o}s-R\'{e}nyi model $\G(n,p)$ for $p\approx d/n$. Of course $\G(n,p)$ and $\G(n,d)$ are supported on nearly disjoint set of graphs and thus it is not possible to expect any strong relation like asymptotic equivalence or contiguity. Kim and Vu conjectured that if $d\gg \log n$, then $\G(n,d)$ can be sandwiched between two copies of $\G(n,p)$ whose densities are both asymptotically $d/n$. This conjecture has recently been confirmed true for all $d$ such that $\min\{d,n-d\}\gg \log^4 n$.

\begin{prop}[\cite{gao2020kim}]\label{p:Kim-Vu}
    Suppose that $\min\{d,n-d\}\gg \log^4 n$. Then there exist $p_*, p^*=(1+o(1))d/n$, and a coupling $(G_*,G,G^*)$ such that
    \[
    G_*\sim \G(n,p_*),\quad G\sim \G(n,d),\quad G^*\sim \G(n,p^*),\quad \text{and}\ \pr(G_*\subseteq G\subseteq G^*)=1-o(1).
    \]
\end{prop}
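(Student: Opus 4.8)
The plan is to realize the sandwich as a composition of containment couplings, building $\G(n,d)$ from the sparser models of this paper---the (loopless) configuration model and unions of random perfect matchings---by a switching-based sequential construction whose increments can be compared edge-by-edge with those of $\G(n,p)$. By the complementation symmetries $\G(n,d)\cong\overline{\G(n,n-1-d)}$ and $\G(n,p)\cong\overline{\G(n,1-p)}$, and since complementation interchanges a lower sandwich with an upper one, it suffices to treat $\log^4 n\ll d\le n/2$; I will moreover focus on $d=o(n)$, the dense range $d=\Theta(n)$ being easier (direct second-moment and concentration arguments suffice there). Fix a slowly vanishing cushion $\eps=\eps(n)\to 0$ with $\sqrt{\log n/d}=o(\eps)$---e.g.\ $\eps=1/\log n$, legitimate because $d\gg\log^4 n$---and put $p_*=(1-\eps)d/n$, $p^*=(1+\eps)d/n$.

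I would generate $G\sim\G(n,d)$ by a procedure that builds it in rounds: each round adds a uniformly random (near-)perfect matching on $[n]$ to the multigraph so far and then applies switchings to reroute the parallel edges it creates. The essential input---this is where the bulk of the analysis lives, and it comes from the switching method---is that after the $d$ corrected rounds the resulting simple graph is \emph{asymptotically equivalent} to $\G(n,d)$, with total accumulated error $O(\mathrm{poly}(\log n)/d)=o(1)$; quantitatively this reduces to near-uniform enumeration of the $d$-regular graphs (or pairings) that contain a prescribed sparse subgraph, uniformly over $d=o(n)$. For the lower sandwich $G_*\subseteq G$ I would run this construction while simultaneously maintaining an embedded copy of $\G(n,p_*)$: at each round I expose, besides the new matching, the targeted edges of $G_*$ incident to it, coupling the matching increments with a Bernoulli-like edge-revealing process so that every wanted edge of $G_*$ is realized; the role of the cushion $\eps$ is precisely that the maximum degree of $\G(n,p_*)$, namely $d+O(\sqrt{d\log n})$, stays safely below $d$, so there is room to embed, and $\G(n,p_*)$ sits below the union $\M_d^{\cup}$ of the matchings, which (before the small rerouting) is the skeleton of $G$. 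The delicate point is that the switching corrections must be performed without destroying the edges reserved for $G_*$; since the expected number of corrections over all rounds is only $O(d^2)=o(dn)$ and each touches $O(1)$ edges, the $dn/2$ edges of $G_*$ can be declared off-limits to the corrections, which are then rerouted elsewhere, so that $G_*\sim\G(n,p_*)$ lies in $G\sim\G(n,d)$ up to $o(1)$ in total variation.

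For the upper sandwich $G\subseteq G^*$ with $G^*\sim\G(n,p^*)$ I would run the comparison in the opposite direction along the same construction: now I couple each matching increment with a \emph{super}-Bernoulli layer, i.e.\ reveal $G^*\sim\G(n,p^*)$ so that every edge ever produced in the build (including the $O(d^2)$ rerouted ones, which I absorb into an extra batch of $\lceil\eps d/2\rceil$ random matchings) lands in $G^*$. Quantitatively this rests on the fact that the per-edge inclusion probability of a union of $d'=(1+\eps/2)d$ random perfect matchings is at most $d'/(n-1)=(1+\eps/2+o(\eps))d/n$, strictly below $p^*=(1+\eps)d/n$, the $\Theta(\eps)$ gap absorbing the positive dependence among disjoint edges of a single matching; so $G\subseteq\M_{d'}^{\cup}\subseteq G^*$ in the coupling. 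An equivalent self-contained route verifies that $\G(n,p^*)$ a.a.s.\ has a $d$-factor (immediate once its minimum degree is at least $d$, which holds since $\eps d\gg\sqrt{d\log n}$) and that a uniformly random $d$-factor of $\G(n,p^*)$, under the extra randomness of $G^*$, is distributed within $o(1)$ of $\G(n,d)$---again a switching count, now for the number of appearances of a fixed $d$-regular graph as a spanning subgraph of $\G(n,p^*)$.

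The step I expect to be the main obstacle is reconciling two requirements that pull against each other: the sequentially built graph must be within $o(1)$ of the \emph{uniform} $d$-regular graph, which forces the use of switchings and a round-by-round bound on the local configurations that obstruct them, so that the errors from all $d$ rounds add up to $o(1)$; while the sandwich forces the matching increments to behave like honest Bernoulli layers and the switching corrections to be quantitatively negligible and disjoint from the embedded (or ambient) copy of $\G(n,p)$. Every one of these error terms---the switching biases, the tail bounds for degree and codegree concentration, and the union bound over the $\binom n2$ vertex pairs and the $d$ rounds---must be $o(1)$ at once, and $d\gg\log^4 n$ is exactly the threshold that makes this possible; the remaining cases $\min\{d,n-d\}=O(\log^4 n)$ are what the rest of the paper addresses, there only with a weakened conclusion.
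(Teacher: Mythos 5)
There is a genuine gap, and it is worth noting first that this paper does not prove Proposition~\ref{p:Kim-Vu} at all: it is quoted from~\cite{gao2020kim}, where the argument is a sequential, edge-by-edge coupling driven by sharp estimates of conditional edge probabilities in random graphs with given degrees (estimates of the type in Theorem~\ref{thm:bipartiteEdgeProb}), not a construction from random perfect matchings. Your sketch instead routes everything through ``$d$ rounds of random matchings plus switching corrections,'' and the two steps on which the whole plan rests are asserted rather than proved; each of them is essentially as hard as the theorem itself. (i) The claim that the corrected matching process is within $o(1)$ total variation of the uniform $\G(n,d)$, uniformly for $\log^4 n\ll d\le n/2$, is exactly the kind of statement the present paper shows to be delicate: the decomposition~\eqref{eq:decomposition} needs a residual $\G(n,t)$ with $t=\omega(1)$, Theorem~\ref{thm:G-M} needs $(2+o(1))d$ matchings merely to \emph{contain} $\G(n,d)$, and the one-matching-at-a-time coupling of Theorem~\ref{thm:P-P} loses $O(\log d/d)$ per step, which is not summable over $d$ steps; your plan needs per-round error $o(1/d)$, and nothing in the sketch indicates where that would come from. (ii) The upper sandwich is justified only by the marginal bound ``each pair lies in a union of $d'$ matchings with probability at most $d'/(n-1)$''; marginal edge probabilities do not yield monotone couplings, and the paper's own Theorem~\ref{thm:Gnp-M} achieves an embedding $\M_{d'}^{\cup}\subseteq\G(n,p)$ only when $p\ge (4+o(1))d'/n$, which is precisely why Theorem~\ref{thm:KM}(b) ends up with $p^*=(8+o(1))d/n$ rather than $(1+o(1))d/n$. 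A coupling $\M_{d'}^{\cup}\subseteq\G(n,(1+\eps)d/n)$ at density ratio $1+o(1)$ would be a new result needing a real proof, not an $\eps$-cushion remark.

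The lower sandwich has the same problem in reverse: since $e(G_*)\sim(1-\eps)dn/2$ while the union of the $d$ matchings has at most $dn/2$ edges, your ``Bernoulli-like edge-revealing'' coupling must force each matching to place essentially all $n/2$ of its edges on prescribed pairs, and no mechanism is given that does this while preserving the uniform law of each matching (or of the final $d$-regular graph); declaring the reserved edges ``off-limits to the corrections'' biases the conditional distribution and that bias is never controlled. Two smaller points: the statement that the maximum degree of $\G(n,p_*)$ is ``$d+O(\sqrt{d\log n})$'' and ``stays safely below $d$'' is self-contradictory as written (you mean $(1-\eps)d+O(\sqrt{d\log n})<d$, which does require $\eps^2 d\gg\log n$), and the dismissal of the range $d=\Theta(n)$ as ``easier, second moment suffices'' is unsupported. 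In short, the proposal is a plausible-sounding outline whose load-bearing claims are unproven and, in light of Theorems~\ref{thm:Gnp-M} and~\ref{thm:G-M}, the matching-union route provably cannot reach the $(1+o(1))$ constants of Proposition~\ref{p:Kim-Vu} without fundamentally new ingredients.
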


The main objective of this paper is to establish connections between the aforementioned random graph and multigraph models. Our first result concerns the remaining open case of the Kim-Vu conjecture \footnote{During the preparation of this manusript, Behague, Il'kovi\v{c} and Montgomery announced that they have proved the conjecture in this open case.}. 

\begin{theorem}\label{thm:KM}
\begin{enumerate}[(a)]
    \item Proposition~\ref{p:Kim-Vu} holds for all $d=n-O(\log^4 n)$.
    \item Suppose that $d\gg \log n$ and $d=O(\log^4 n)$. There exist $p_*=(1-o(1))d/n$, $p^*=(8+o(1))d/n$ and a coupling $(G_*,G,G^*)$ such that
    \[
      G_*\sim \G(n,p_*),\quad G\sim \G(n,d),\quad G^*\sim \G(n,p^*),\quad \text{and}\ \pr(G_*\subseteq G\subseteq G^*)=1-o(1).
    \]
    \item If $d=o(\log n)$, then for every fixed $\eps>0$ there exists a coupling $(G,G^*)$ such that
    \[
      G\sim \G(n,d),\quad G^*\sim \G(n,(1+\eps)\log n/n),\quad \text{and}\ \pr (G\subseteq G^*)=1-o(1),\quad \text{if $n$ is even},
    \]
    and
    \[
      G\sim \G(n,d),\quad G^*\sim \G(n,(2+\eps)\log n/n),\quad \text{and}\ \pr (G\subseteq G^*)=1-o(1),\quad \text{if $n$ is odd}.
    \]
    \item If $d\le c\log n$ for some fixed $c>0$ then there exists $C(c)>0$ depending on $c$ only, such that the coupling in part (c) exists for $p^*=C(c)\log n/n$.
\end{enumerate}

\end{theorem}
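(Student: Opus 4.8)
The plan is to build the required couplings by passing through the chain of intermediate models advertised in the abstract, exploiting in particular the configuration model $\P(n,d)$ and the union-of-matchings model $\M_d^{\cup}$, together with the already-known case of Kim--Vu (Proposition~\ref{p:Kim-Vu}). For part~(a), the key observation is the complementation trick: if $G\sim\G(n,d)$ then its complement $\bar G$ is distributed as $\G(n,n-1-d)$, and $n-1-d=O(\log^4 n)$ when $d=n-O(\log^4 n)$. So I would first establish a version of the sandwich for small degree, then translate it through complements. Concretely, from a lower sandwich $G_*\subseteq G$ with $G_*\sim\G(n,p_*)$ for $\G(n,n-1-d)$ one gets $\bar G_*\supseteq \bar G\sim\G(n,d)$ with $\bar G_*\sim\G(n,1-p_*)$, which supplies the \emph{upper} bound $G\subseteq G^*$ with $p^*=1-p_* = (1+o(1))d/n$ in the regime $d=n-O(\log^4n)$ (note $1-p_* = 1-(1+o(1))(n-1-d)/n = (1+o(1))d/n$); symmetrically the upper sandwich for the complement yields the lower bound. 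Thus part~(a) reduces to proving, for $d'=n-1-d = O(\log^4 n)$, both a lower sandwich $\G(n,q_*)\subseteq\G(n,d')$ with $q_* = (1+o(1))d'/n$ and an upper sandwich $\G(n,d')\subseteq\G(n,q^*)$ with $q^* = (1+o(1))d'/n$. When additionally $d'\gg\log^4 n$ this is exactly Proposition~\ref{p:Kim-Vu}; when $\log n \ll d' = O(\log^4 n)$ the upper side is part~(b) (which only gives $q^* = (8+o(1))d'/n$, hence a weakened conclusion — but part~(a) as stated asks only that ``Proposition~\ref{p:Kim-Vu} holds'', so one must be careful: the constant~$8$ would spoil it, so part~(a) is really the claim for $d = n - O(\log^4 n)$ with $n-1-d$ \emph{also} $\gg\log^4 n$, i.e.\ $\log^4 n \ll n-d$, in which the genuine Proposition~\ref{p:Kim-Vu} applies to the complement directly, and the lower sandwich for $d'$ in the full range $\log n \ll d' = O(\log^4 n)$ would need to be supplied separately). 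I would isolate the lower sandwich for $d'$ in the range $d' = O(\log^4 n)$ as a lemma proved via the configuration model: couple $\G(n,d')$ with $\P_*(n,d')$ (Proposition~\ref{prop:1} and its loopless variant), then couple a sparse $\G(n,q_*)$ into $\P_*(n,d')$ by revealing pairs one bin at a time and using that with $q_* = (1-o(1))d'/n$ the sparse graph has max degree $\le d'$ and no multiplicities w.h.p., so it embeds.

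For part~(b), with $\log n \ll d = O(\log^4 n)$, the natural route is $\G(n,d) \;\longleftrightarrow\; \P_*(n,d) \;\longleftrightarrow\; \M_d^{\cup}$ (or $\M_d^{\plus}$) and finally into $\G(n,p^*)$. First couple $G[P]$ for $P\sim\P(n,d)$ with $\G(n,d)$: since $d\to\infty$ the simple-graph event has probability $o(1)$, so I cannot condition directly, but I can couple so that $G[P]$ \emph{contains} a graph distributed as $\G(n,d)$ after deleting the $O(d^2/n) = o(d)$ loops and excess multi-edge copies (a switching/deletion argument), which is enough for a \emph{one-sided} containment $\G(n,d)\subseteq \M^{\oplus}_d$-type statement. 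Next, a union of $d$ independent perfect matchings $\M_d^{\cup}$ can be coupled so that $\P_*(n,d)\subseteq\M^{\plus}_d$: decompose the loopless pairing greedily into $d$ matchings of the point-set, each inducing a perfect matching on $[n]$ (this needs $d$-edge-colouring of a $d$-regular bipartite-like structure, essentially König). Then comes the main step: couple the union of $d$ perfect matchings with $\G(n,p^*)$ from above. A single uniform perfect matching on $[n]$ is contained in $\G(n,c\log n/n)$ w.h.p.\ for suitable constant $c$ — in fact, one can realise a uniform perfect matching as the set of edges picked by a simple randomized greedy process, and by the standard argument each vertex gets few candidate partners; taking a union over $d$ independent matchings and a union bound over the $d\cdot n$ insertion steps, the total graph stays within $\G(n,p^*)$ with $p^* = (8+o(1))d/n$ — the constant $8$ is where one pays for (i) the factor coming from bounding a single random matching by an Erd\H{o}s--R\'enyi graph, (ii) the loss in the pairing-to-matching decomposition, and (iii) a union-bound slack. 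I would make the constant explicit by analysing the following embedding: reveal the $d$ matchings sequentially; when inserting matching $j$, expose its edges via the pairing process, and maintain the invariant that the partial union is dominated by $\G(n, p_j)$ with $p_j$ growing additively; the Poisson/Chernoff control on co-degrees gives each step costs $(1+o(1))\cdot(\text{a small constant})\cdot d/n$, and optimizing gives $8$. For the lower bound $G_*\subseteq G$ with $p_* = (1-o(1))d/n$, I reuse the configuration-model embedding from part~(a): $\G(n,p_*)$ with $p_* = (1-o(1))d/n$ embeds into $G[P]$, hence into $\G(n,d)$ after undoing the simple-conditioning coupling.

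For part~(c), $d = o(\log n)$, the complement trick is irrelevant; I want only the upper containment $\G(n,d)\subseteq\G(n,p^*)$ with $p^*\sim \log n/n$ (even $n$) or $2\log n/n$ (odd $n$). Here I would use the matching decomposition: $\G(n,d)$ is contiguous to (for fixed $d$) or at least dominated by a union of $d$ perfect matchings (even $n$) or near-perfect matchings (odd $n$). The crucial input is that a \emph{single} uniform perfect matching $M$ on $[n]$ satisfies $M\subseteq\G(n,(1+\eps)\log n/n)$ w.h.p.\ for even $n$ — this is essentially a coupon/coupling statement: realise $\G(n,(1+\eps)\log n/n)$ and show it contains a perfect matching that can be coupled to be uniform, using that $\G(n,(1+\eps)\log n/n)$ has a perfect matching w.h.p.\ (the classical threshold is $\log n/n$) and a symmetrisation argument to make it uniform; for odd $n$ the relevant threshold for a near-perfect matching / Hamilton-ish structure covering all vertices is $2\log n/n$, explaining the factor $2$. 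Then for $d = o(\log n)$ take $d$ independent copies; since $d = o(\log n)$, a union bound over the $d$ matchings still leaves $p^* = (1+\eps)\log n/n$ (resp.\ $(2+\eps)\log n/n$) as the dominating density — the slack $\eps$ absorbs the union over $d = o(\log n)$ matchings. For part~(d), $d \le c\log n$, the same union-of-matchings embedding works but now $d$ is a constant multiple of $\log n$, so the union bound over $d$ matchings multiplies the per-matching density by $\Theta(c)$, giving $p^* = C(c)\log n/n$ for some $C(c)$ depending only on $c$; no new idea is needed beyond tracking constants.

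The main obstacle, and the technically heaviest part, is the embedding of a union of (near-)perfect matchings into an Erd\H{o}s--R\'enyi graph with the claimed densities — both establishing the single-matching statements $M\subseteq\G(n,(1+\eps)\log n/n)$ (even) and $\subseteq\G(n,(2+\eps)\log n/n)$ (odd) with the \emph{uniformity} of $M$ preserved under the coupling, and controlling the accumulated density when taking a union of $d$ of them so that the constant comes out as $8$ in part~(b) and as the stated thresholds in parts~(c)--(d). A secondary obstacle is the passage from $\P(n,d)$ to $\G(n,d)$ when $d\to\infty$: since the simple-graph event is rare one must run a deletion/switching coupling that removes the $o(d n)$ bad pairs while preserving the distribution of the resulting simple graph, and argue this does not disturb the containments in either direction.
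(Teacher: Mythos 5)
Your high-level architecture matches the paper's (complementation for part (a); bridging $\G(n,d)$ through the configuration model and unions of matchings into $\G(n,p)$), but the two steps you treat as routine are exactly where the real work lies, and your shortcuts fail. First, you claim $\P_*(n,d)\subseteq\M_d^\plus$ by greedily/K\H{o}nig-decomposing the pairing into $d$ perfect matchings. Even setting aside that a non-bipartite $d$-regular multigraph need not decompose into perfect matchings, the fatal issue is distributional: $\M_d^\plus$ is the superposition of $d$ \emph{independent uniform} perfect matchings, and the matchings produced by any decomposition of a given pairing are nothing like independent uniform matchings. This is precisely the obstacle the paper flags after~\eqref{eq:decomposition} and overcomes with Theorems~\ref{thm:P-P}, \ref{thm:P-P2} and~\ref{thm:P-M}, at the price of using $\tau(d)=(2+o(1))d$ matchings rather than $d$ --- which is also where half of the constant $8$ in part (b) comes from (the other factor $4$ is the limit $g(x)\to 1/4$ in Theorem~\ref{thm:Gnp-M}). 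Second, your passage from $\G(n,d)$ to the pairing model by ``deleting loops and excess multi-edge copies'' of $G[P]$, $P\sim\P(n,d)$, cannot work: a $d$-regular simple graph contained in a $d$-regular multigraph must equal it, and $G[P]$ is simple with probability $o(1)$ for $d\to\infty$; deleting edges changes the degree sequence, so the deleted graph is not distributed as $\G(n,d)$. The paper instead couples $\G(n,d)$ inside $G[P^*]$ with $P^*\sim\P_*(n,d+1)$ (Theorem~\ref{thm:P-G}), using one extra degree and concentration of the number of perfect matchings covering the double edges. Third, the step you yourself call the main obstacle --- fitting $\Theta(d)$ i.i.d.\ uniform matchings inside a single $\G(n,p)$ --- is never supplied, and your sketch for part (c) (take $d$ independent copies of $\G(n,(1+\eps)\log n/n)$ and union-bound) produces a union of density $\Theta(d\log n/n)$, not $(1+\eps)\log n/n$. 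The paper proves this step (Theorem~\ref{thm:Gnp-M}) via the $d$-out model: couple $\O(n,d)\subseteq\G(n,p)$, split it into $\lfloor d/2\rfloor$ independent $2$-out graphs, and extract one matching from each by Frieze's theorem, uniformity following from exchangeability.

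Two further points. Your worry that the constant $8$ ``spoils'' part (a) is a misreading of the complementation: since $d/n\to 1$ when $d=n-O(\log^4 n)$, complementing \emph{any} upper coupling $\G(n,d')\subseteq\G(n,q^*)$ with $q^*=O(\log^4 n/n)$ (the constants $8$ or $C(c)$ are immaterial) gives $G_*\subseteq G$ with $p_*=1-q^*=(1+o(1))d/n$, and the upper containment in part (a) is essentially trivial (even $p^*=1$ is $(1+o(1))d/n$ in this range). Hence the extra ``lower sandwich for $d'$'' you introduce is both unnecessary and unproved --- a maximum-degree observation does not embed $\G(n,q_*)$ into a random pairing; that is the hard direction of Kim--Vu, which the paper simply cites from prior work for part (b). Finally, for odd $n$ your factor-$2$ explanation (threshold for near-perfect matchings) is wrong --- that threshold is still $(1+o(1))\log n/n$ --- and the perfect-matching machinery genuinely requires even $n$; the paper handles odd $n$ by a separate construction (delete a vertex, apply the even case to $n-1$ with degree $d-1$, then reattach the vertex and add a perfect matching on its $n-1-d$ non-neighbours avoiding the existing graph), and it is this last step that costs the extra $(1+o(1))\log n/n$.
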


We remark that part (a) of Theorem~\ref{thm:KM} settles the Kim–Vu conjecture in the regime of very large $d$, while part (b) treats the case $d=O(\log^4 n)$ and establishes a weaker form of the conjecture, where $p^*$
  differs from the conjectured value by only a constant factor. The coupling in part (c) is optimal when $n$ is even. For odd $n$, the additional $\log n / n$ term in the edge density of $G^*$ arises from a technical limitation in our proof, and we did not pursue an optimal coupling in this case.

Our proof takes a very different approach from all previous works~\cite{kim2004sandwiching,dudek2017embedding,gao2020sandwiching,gao2020kim}, relying on couplings between $\G(n,d)$ and other well-studied models, such as the configuration model and superpositions or unions of perfect matchings. We provide a brief overview of the proof of Theorem~\ref{thm:KM}, highlighting the intermediate coupling results obtained along the way, which are of independent interest and significance.

\subsection{Bridging by unions of perfect matchings}

The main goal is to embed $\G(n,d)$ into $\G(n,p)$ for some $p$ reasonably greater than $d/n$. Motivated by Proposition~\ref{p:matching}, we consider $\M_{d}^\cup$ as an intermediate model to bridge $\G(n,d)$ and $\G(n,p)$.

Frize and the third author~\cite{frieze2025threshold} showed that for every fixed $r\ge 3$, the union of random perfect $r$-matchings can be embedded into a random $r$-uniform hypergaph with independent hyperedges.  
Extending their proof, we show that such a coupling exists for graphs ($r=2$) as well.
\begin{theorem}\label{thm:Gnp-M}
There is a continuous and strictly increasing function $f:[1,\infty)\to[0,\infty)$ with $g(1)=0$ and $\lim_{x\to\infty}g(x)=1/4$ such that the following holds.
Let $n$ be even.
    Let $\eps>0$ be fixed, and $p=x\log n/n$ for $x=x(n)\ge 1+\eps$. Then, for any $d\le (g(x)+o(1))pn$ there exists a coupling $(H_*, G^*)$ such that
    \[
    H_*\sim \M_d^\cup, \quad G^*\sim \G(n,p),\quad \pr(H_*\subseteq G^*)=1-o(1).
    \]
\end{theorem}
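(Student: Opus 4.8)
The plan is to construct the coupling \emph{online}, building the $d$ perfect matchings one at a time while revealing $\G(n,p)$ in synchronized phases, following and extending to the graph case $r=2$ the strategy of Frieze and the third author~\cite{frieze2025threshold} for $r$-uniform hypergraphs. With $p=x\log n/n$, I would view $\G(n,p)$ as a union of independent binomial random graphs $G^*=G_0\cup G_1\cup\dots\cup G_d$ satisfying $(1-p_0)(1-q)^d=1-p$, where $G_0\sim\G(n,p_0)$ with $p_0$ slightly above $\log n/n$ is a \emph{reservoir} shared by all rounds, and each $G_i\sim\G(n,q)$ is a fresh slice attached to round $i$. First reveal $G_0$ and condition on the high-probability event that it has the robustness and expansion properties used in the classical proof that $\G(n,c\log n/n)$ with $c>1$ contains a perfect matching; in particular $G_0$ has no isolated vertices --- the obstruction that is exactly tight at $x=1$ and that forces $g(1)=0$.

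For $i=1,\dots,d$ in turn, reveal the slice $G_i$ and generate a uniform random perfect matching $M_i$ of $[n]$, so that $M_1,\dots,M_d$ are i.i.d.\ and $H_*:=M_1\cup\dots\cup M_d\sim\M_d^\cup$. I would generate $M_i$ by the classical sequential procedure --- repeatedly take the least unmatched vertex $v$ and match it to a uniformly random unmatched vertex, realized through a fresh list of i.i.d.\ uniform proposals --- but couple the proposals with the edges of $G^*$ revealed so far so that, with high probability, every matched pair is an edge of $G^*$. The delicate point, where essentially all of the work lies, is to do this while keeping each $M_i$ \emph{exactly} a uniform random perfect matching of $[n]$ and $G^*$ \emph{exactly} distributed as $\G(n,p)$: the presence of each unordered pair in $G^*$ has to be an independent $\mathrm{Bernoulli}(p)$ revealed at some moment of the process, and the bias created by preferring $G^*$-edges must be cancelled precisely by the freedom in \emph{when} each such status is revealed (deferred decisions). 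Once the greedy phase stalls, leaving only $o(n)$ unmatched vertices, one completes $M_i$ by absorbing those vertices via augmenting paths (rotations through the partial matching that use reservoir edges), again arranged so that the completion is a uniform random perfect matching of the remaining vertex set. Keeping track of the \emph{defect set} of matched pairs not yet certified to lie in $G^*$, and showing that it is empty with high probability in every round, is the heart of the argument.

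A union bound over the $d$ rounds then yields the coupling. The admissible range of $d$ emerges from the analysis of this process --- concentration estimates ensuring that at every step there is enough room, together with the amount of accumulated bias that the deferred-decision corrections can absorb --- and the function $g$ is defined as (a lower bound for) the resulting threshold ratio $d/(pn)$; it is continuous and strictly increasing, vanishes at $x=1$ as above, and tends to $1/4$ as $x\to\infty$, the last constant being what the $r=2$ specialization of the argument of~\cite{frieze2025threshold} produces. I expect the main obstacle to be maintaining the two exact marginal distributions throughout the process, and especially the endgame of each round: completing a perfect matching on the last unmatched vertices \emph{inside} $G^*$ while keeping $M_i$ uniform and $G^*\sim\G(n,p)$ --- essentially the same difficulty that already underlies the bare perfect-matching threshold for $\G(n,c\log n/n)$.
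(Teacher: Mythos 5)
Your proposal is a plan rather than a proof, and the plan leaves the decisive step unresolved. The whole difficulty of the statement is the mechanism by which one generates a perfect matching that is \emph{exactly} uniform on $[n]$ while forcing its pairs to be edges of a graph that must remain \emph{exactly} distributed as $\G(n,p)$; you name this as ``the delicate point, where essentially all of the work lies'' and as ``the heart of the argument,'' but you give no construction for it. Neither the deferred-decision cancellation of the bias toward $G^*$-edges nor the endgame (absorbing the $o(n)$ stalled vertices by augmenting paths \emph{inside} $G^*$ while keeping the matching uniform) is specified, and there is no reason offered why such a sequential scheme can be made to preserve both marginals at all; this is exactly the obstruction that makes a naive round-by-round coupling hard, and asserting that it can be handled is not an argument. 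Likewise, the claim that the admissible ratio $d/(pn)$ coming out of such a process is a continuous, strictly increasing function with limit $1/4$ is unsupported: you attribute the constant to ``the $r=2$ specialization'' of \cite{frieze2025threshold}, but that paper treats $r\ge 3$, the extension to $r=2$ is precisely the content of this theorem, and a direct matching-by-matching coupling would have no intrinsic reason to produce the value $1/4$.

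For comparison, the paper avoids any online coupling of matchings with edges. It first proves (Proposition~\ref{prop:out}) a coupling $\O(n,d)\subseteq\G(n,p)$ for $d\le f(x)pn$ with $f\to 1/2$: one splits $\G(n,p)$ into an orientation-symmetric part of directed density $\vec p\sim p/2$, shows each vertex a.a.s.\ has out-degree at least $d$ there (this is where $x>1$ and the min-degree threshold enter, forcing $f(1)=0$), and handles the small set of low out-degree ``bad'' vertices with a separate slice and an auxiliary binomial domination. Then it decomposes $\O(n,d)$ into $\lfloor d/2\rfloor$ independent copies of $\O(n,2)$, invokes Frieze's theorem that $\O(n,2)$ a.a.s.\ contains a perfect matching, and extracts from each copy a matching that is uniform by symmetry and i.i.d.\ across copies; this yields $g=f/2$, and the factor $1/4$ is exactly the product of the orientation-splitting loss ($1/2$) and the need for two out-edges per vertex per matching ($1/2$). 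If you want to salvage your route, you would need to supply the exact-marginal coupling mechanism you postulate, which is a substantial open-ended task; the paper's detour through the $d$-out model is what makes the problem tractable.
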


The next step is to establish the other side of the coupling.

\begin{theorem}\label{thm:G-M} Let $n$ be even.
    Suppose that $d=\omega(1)$ and $d=o(n^{1/7}/\log n)$. Then, there exist $\tau(d)=(2+o(1))d$ and a coupling $(G_*,H^*)$ such that
    \[
    G_*\sim \G(n,d),\quad H^*\sim \M_{\tau(d)}^\cup, \quad \pr(G_* \subseteq H^*)=1-o(1).
    \]

\end{theorem}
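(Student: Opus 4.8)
The plan is to sample the larger object first and carve a random regular graph out of it. Take $N_1,\dots,N_\tau$ i.i.d.\ uniform perfect matchings of $[n]$ with $\tau=(2+o(1))d$, set $H^*=N_1\cup\cdots\cup N_\tau\sim\M_\tau^\cup$, and let $G_*$ be a uniformly random $d$-regular spanning subgraph ($d$-factor) of $H^*$, chosen independently given $H^*$ (if $H^*$ has no $d$-factor, an $o(1)$-probability event, let $G_*$ be arbitrary). Then $G_*\subseteq H^*$ with probability $1-o(1)$ by construction, and the theorem reduces to showing that the law of $G_*$ is within $o(1)$ total variation of $\G(n,d)$; once this holds, a maximal coupling between $\mathcal L(G_*)$ and $\G(n,d)$, glued along $H^*$, replaces $G_*$ by a genuine $\G(n,d)$ while altering its value only on an $o(1)$-probability set, which preserves $G_*\subseteq H^*$ whp.

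To analyse the law of $G_*$, write $\Pr[G_*=g]=\sum_{H}\Pr[\M_\tau^\cup=H]\,\ind{g\subseteq H}\,/\,F(H)$, where $F(H)$ is the number of $d$-factors of $H$. The first step is structural: since $d=o(\sqrt n)$, two of the $N_i$'s coincide at a fixed vertex with probability $O(1/n)$, so whp $H^*$ is nearly $\tau$-regular, with only $O(d^2)$ vertices of degree $\tau-1$ (none smaller, since the probability that some vertex sees two coincidences is $O(\tau^4/n)=o(1)$ in our range) and these spread apart; in particular $H^*$ is pseudorandom. The second step is counting: one shows by the switching method that $F(H)$ concentrates around a single value $F$ for typical $H$ (in particular $F>0$ whp, which also yields existence of a $d$-factor), and that $\Pr[g\subseteq\M_\tau^\cup]$ concentrates around a single value uniformly over all $d$-regular $g$; together these give $\Pr[G_*=g]=(1+o(1))\Pr[\G(n,d)=g]$ uniformly, as wanted. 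The crucial role of the surplus $\tau\approx 2d$ over the minimum $d+O(1)$ needed merely for $H^*$ to have large enough degrees is that it provides ample room to perform switchings without leaving $H$ or destroying $g$, which is what makes the two counts insensitive to the isomorphism type of $H$, respectively $g$. (One may instead run the construction forwards: sample $G_*\sim\G(n,d)$, take a proper edge-colouring with $\tau$ colours — available because $\chi'(G_*)\le 2d-1<\tau$ — complete each colour class to an independent uniform perfect matching of $K_n$, and show the resulting $\tau$-tuple is couplable with the i.i.d.\ family; this meets the same counting core.)

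The main obstacle is the uniformity in the second step: proving that neither $F(H)$ nor $\Pr[g\subseteq\M_\tau^\cup]$ varies to first order with the isomorphism type over these exponentially large families. This is exactly where the hypothesis $d=o(n^{1/7}/\log n)$ is consumed: the switching estimates accumulate error terms controlled by the expected number of short, locally dense configurations — short cycles through or clusters among the $O(d^2)$ deficient vertices of $H^*$, and pairs of switchings that interfere — and forcing every such expectation to be $o(1)$ costs a polynomial factor of $d$ against $n$, of order $d^7\log^{O(1)}n=o(n)$. The remaining points are routine: $H^*$ being atypical, or $G_*$ failing to be Class $1$ in the forward variant, are $o(1)$-probability events absorbed into the total-variation error, and the final gluing of couplings along $H^*$ is standard.
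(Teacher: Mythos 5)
Your reduction (sample $H^*\sim\M_\tau^\cup$, take a uniform random $d$-factor $G_*$ of it, then repair the marginal via a maximal-coupling/Strassen gluing) is sound in outline, but the entire mathematical content of the theorem sits in your ``second step,'' which is asserted rather than proved --- and in one place is false as stated. The claim that $\pr(g\subseteq\M_\tau^\cup)$ equals $(1+o(1))$ times a single value \emph{uniformly over all} $d$-regular $g$ cannot hold: this probability is in essence a partition function over proper $\tau$-edge-colourings of $g$ weighted by the colour-class sizes, and for structured $g$ (e.g.\ a disjoint union of copies of $K_{d+1}$) one should not expect agreement with the typical value to within a $1+o(1)$ factor; at best an ``almost all $g\sim\G(n,d)$'' version could be true, and establishing it requires a genuine second-moment / small-subgraph-conditioning analysis, not a one-line appeal to switchings. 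The companion claim, that the number $F(H)$ of $d$-factors of $H\sim\M_\tau^\cup$ concentrates within a $1+o(1)$ factor, is an even stronger statement about an exponentially large spanning-structure count in a dependent multigraph model; switchings alone do not give relative error $o(1)$ for such counts, no variance computation is sketched, and this step is at least as hard as everything the paper does. Note also that nothing in your argument actually uses $\tau=(2+o(1))d$ --- the same sketch could be written verbatim for $\tau=(1+\eps)d$ --- so the proposal offers no mechanism showing where the factor $2$, or the hypothesis $d=o(n^{1/7}/\log n)$, is really consumed; your explanation of both is speculative.

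For comparison, the paper never compares $\G(n,d)$ with a random $d$-factor of $\M_\tau^\cup$. Theorem~\ref{thm:G-M} is deduced immediately from two couplings: $\G(n,d)\subseteq G[P]$ for $P\sim\P_*(n,d+1)$ (Theorem~\ref{thm:P-G}), and $\P_*(n,d)\subseteq\M^\plus_{\tau(d)}$ (Theorem~\ref{thm:P-M}), the latter built by induction on the degree by embedding $\P_*(n,t+1)$ into $\P_*(n,t)+\M_2^\plus$ at each step (Theorem~\ref{thm:P-P2}) and finishing the base case with contiguity (Theorem~\ref{thm:smalld}). The only quantities that must be shown concentrated are numbers of \emph{perfect matchings} (possibly covering the double edges of the pairing), which are tractable via McKay-type enumeration and second moments; and the factor $2$ in $\tau(d)$ arises because adding one matching per degree step only gives coupling error $O(\log d/d)$, which is not summable, whereas two matchings per step give $O(\sqrt{\log d}/d^{1.5})$, which is. To salvage your route you would have to actually prove your two concentration claims in their almost-everywhere forms, a task substantially harder than the 1-factor computations the paper relies on.
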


The model $\M_d^\cup$ lends itself well to analysis due to the convenient arithmetic identity $\M_{a}^\cup \cup\M_b^\cup=\M_{a+b}^\cup$ for any integers $a,b\ge 0$, where the union on the left-hand side is taken over two independent copies, and equality is understood in the sense of distribution. A similar arithmetic property holds for $\G(n,p)$, commonly known as the sprinkling technique: to generate $\G(n,p)$, one may first expose $\G(n,p')$ for some $p' < p$, and then sprinkle an independent copy of $\G(n,q)$ on top, where the parameters satisfy $1-p=(1-p')(1-q)$. However, $\G(n,d)$ does not have such nice arithmetic identities. The closest known property of $\G(n,d)$ to the aforementioned arithmetic identities~\cite{hollom2025monotonicity}  is that when $d=\omega(1)$ and $d=o(n^{1/7})$, 
\begin{equation}
    d_{TV}(\G(n,d) , \G(n,t) \oplus \M_{d-t}^\oplus)=o(1), \quad \text{for any $\omega(1)=t\le d-1$}. \label{eq:decomposition}
\end{equation}
where $\G(n,t) \oplus \M_{d-t}^\oplus$ denotes the union of independent $\G(n,t)$ and $\M_{d-t}^\oplus$ conditioned on them being disjoint.

There are two major obstacles in proving Theorem~\ref{thm:G-M} by the decomposition~\eqref{eq:decomposition}. First, $\M_{d-t}^\oplus$ is not the union of independent matchings. It would take a large number, exponential in $d$, of independent matchings to contain a subset of $d$ pairwise disjoint matchings. Second, the decomposition~\eqref{eq:decomposition} requires $t=\omega(1)$, which prevents a complete decomposition of $\G(n,d)$ into $d$ perfect matchings, as required by Theorem~\ref{thm:G-M}, even setting aside the independence issue.

To overcome these two obstacles, we turn to the loopless configuration model, and use it to bridge $\G(n,d)$ and $\M_t^\cup$ for some $t\ge d$.

\subsection{Decomposition of the loopless configuration model}

Instead of embedding $\G(n,d)$ into $\M_{t}^\cup$ for some $t\ge d$, we aim instead to embed $\P_*(n,d)$ into $\M_t^\plus$ by repeately decomposing $\P_*(n,d)$ as in~\eqref{eq:decomposition}. We first deal with the ``decomposition'' for small $d$.
We defined the superposition of two (multi)graphs. The superposition of two pairings is defined in a natural way by taking the union of the points and the matchings of the two pairings. Note that a matching can be natually viewed as a pairing and hence $\M_d^{\plus}$ can be naturally viewed as the superposition of $d$ pairings. In what follows, when we write $\G_1+\G_2$ where $\G_1$ and $\G_2$ are two random (multi)graphs, it means the superposition of  independent $G_1\sim \G_1$ and $G_2\sim \G_2$.

\begin{theorem}\label{thm:smalld}Let $n$ be even.
Let  $\tau=\omega(1)$. Then, for every fixed  $d\ge 3$,   
\begin{enumerate}[(a)]
    \item 
there exists a coupling $(P,H)$ such that $P\sim \P_*(n,d)$, $H\sim \M^\plus_{\tau}$ and $\pr(P\subseteq H)=1-o(1)$;
\item there exists a coupling $(G,H)$ such that $G\sim \G(n,d)$, $H\sim \M^\cup_{\tau}$ and $\pr(G\subseteq H)=1-o(1)$. 
\end{enumerate}
\end{theorem}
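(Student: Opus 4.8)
The plan is to prove~(a) and deduce~(b); fix $d\ge 3$, fix $\tau=\omega(1)$, and let $n$ be even.

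\textit{Reduction.} Realise $H\sim\M^\plus_\tau$ as the superposition $M_1+\cdots+M_\tau$ of independent uniform perfect matchings of $[n]$, and recall that, as a pairing, the $j$-th point of bin $u$ in $H$ is matched to the $j$-th point of bin $M_j(u)$. It is then routine to verify that $P\subseteq H$ holds if and only if $G[P]$ is dominated by $G[H]=M_1+\cdots+M_\tau$ edge-multiplicity-wise (that is, $\mathrm{mult}_{G[P]}(uv)\le\mathrm{mult}_{G[H]}(uv)$ for all $u\neq v$): given these inequalities one obtains a point-to-point embedding of $P$ into $H$ by selecting, for each edge $uv$ of $G[P]$ and with the correct multiplicity, distinct layers $j$ with $M_j(u)=v$, an allocation that works out because each bin of $G[P]$ has degree exactly $d$ and each layer at a vertex points to a single neighbour. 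Thus it suffices to couple $P\sim\P_*(n,d)$ with $\tau$ i.i.d.\ uniform perfect matchings so that, with probability $1-o(1)$, the support of $G[P]$ lies inside $M_1\cup\cdots\cup M_\tau$ and the $O(1)$ multiple edges of $G[P]$ are covered with their multiplicities.

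\textit{The coupling.} I would construct this coupling by exposing $M_1,M_2,\dots$ one at a time while generating $P$ by deferred decisions, arranged so that at every intermediate stage the revealed part of $P$ already embeds into the revealed matchings. Concretely, one runs the sequential point-pairing of $\P_*(n,d)$ interleaved with the exposure of the matchings: a request to match a free point of bin $u$ to a uniformly random free point of a bin $v\neq u$ is serviced along a suitable unused layer $j$, exposing and then uniformly completing a fresh matching only as required, so that the $M_j$ remain i.i.d.\ uniform; crucially, revealing the value $M_j(u)$ rather than conditioning on it keeps each $M_j$ marginally uniform. Since $d$ is bounded and $\tau=\omega(1)$, there is enough slack for the procedure not to stall as long as $G[P]$ is typical, and for fixed $d$ a typical $G[P]$ is guaranteed with probability $1-o(1)$ by standard first- and second-moment (and switching) estimates for $\P_*(n,d)$: maximum multiplicity $2$, only $O(1)$ double edges, and no other harmful local configuration. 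The $O(1)$ multiple edges are handled by reserving a constant number of layers for each; the rare atypical events are absorbed by setting aside a vanishing fraction of the matchings and arguing locally; and any resulting $o(1)$ total-variation discrepancy in the joint law of $(M_1,\dots,M_\tau)$ is removed by a final coupling to genuinely i.i.d.\ uniform matchings. One may equivalently organise this as an iteration that peels one degree at a time, spending $\omega(1)$ fresh matchings per degree.

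\textit{Part~(b).} Conditioning $P$ on $\{G[P]\text{ is simple}\}$, an event of probability bounded away from $0$, makes $G[P]$ distributed as $\G(n,d)$ by Proposition~\ref{prop:1}, and preserves conclusions that hold with probability $1-o(1)$; moreover a simple $G[P]$ lies inside $M_1\cup\cdots\cup M_\tau=\M^\cup_\tau$ as soon as it lies inside the superposition $M_1+\cdots+M_\tau$. This yields a coupling of $\G(n,d)$ and $\M^\cup_\tau$ with containment with probability $1-o(1)$, provided the conditioning does not distort the law of $H$; in the construction above it does not, up to $o(1)$, because the few layers carrying the multiple edges can be re-randomised, so $H$ can be coupled back to $\M^\plus_\tau$. (Alternatively, one simply runs the whole construction starting from $\G(n,d)$ in place of $\P_*(n,d)$, which avoids multiple edges entirely.)

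\textit{Main obstacle.} The difficulty is to make the interleaved construction simultaneously faithful to the law $\P_*(n,d)$ of $P$, faithful to generating $M_1,\dots,M_\tau$ as i.i.d.\ uniform perfect matchings, and non-blocking. Because $\tau$ may grow arbitrarily slowly, $M_1\cup\cdots\cup M_\tau$ is an extremely sparse random graph and a ``generic'' pairing from $\P_*(n,d)$ does not embed into it; the coupling must therefore introduce heavy correlation between $P$ and the matchings while keeping each matching individually uniform and the matchings jointly independent. Reconciling these three requirements, and in particular controlling the $O(1)$ structural defects of $G[\P_*(n,d)]$ that the crude interleaving cannot absorb directly, is where the technical work of the proof is concentrated.
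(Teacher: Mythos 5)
There is a genuine gap: the entire construction rests on the interleaved ``deferred decisions'' coupling, which you assert rather than prove, and neither natural reading of it can work as described. If the pairing process of $\P_*(n,d)$ draws the partner bin $v$ by its own law (proportional to free points) and you then look for a layer $j$ with $M_j(u)=v$, a fresh or partially revealed uniform matching satisfies this with probability $O(1/n)$, so with only $\tau=\omega(1)$ layers (possibly as slow as $\log\log n$) the request is serviced with probability $O(\tau/n)=o(1)$ and the procedure stalls on essentially every pair. If instead you let the revealed layers dictate the partners, then $P$ is (up to the adaptive choice of layers) a superposition of $d$ matchings, and this is \emph{not} within $o(1)$ total variation of $\P_*(n,d)$: for fixed $d$ the models $\P_*(n,d)$ and $\M_d^\plus$ are mutually contiguous but not asymptotically equivalent (e.g.\ the number of double edges is asymptotically Poisson with mean $(d-1)^2/4$ in one and $d(d-1)/4$ in the other), so the discrepancy you propose to ``remove by a final coupling'' is a constant, not $o(1)$, and Strassen-type corrections cannot absorb it. Your closing ``Main obstacle'' paragraph in fact names exactly this tension, but the proposal contains no mechanism that resolves it; the typicality facts you invoke (maximum multiplicity $2$, $O(1)$ double edges) do not address it.

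The paper resolves the difficulty globally rather than edge-by-edge: split the $\tau$ matchings into $i^*=\lfloor\tau/d\rfloor=\omega(1)$ independent blocks $H_1,\dots,H_{i^*}$, each distributed as $\M_d^\plus$; use contiguity of $\P_*(n,d)$ and $\M_d^\plus$ (via small subgraph conditioning) to show that the density $w(P)=|\Omega|\cdot\pr_{\M_d^\plus}(P)$ lies in $[f_n^{-1},f_n]$ on a $(1-\eps)$-fraction of pairings for any $f_n=\omega(1)$; run rejection sampling over the blocks, accepting $H_i$ with probability $f_n/w(H_i)$, so that the accepted block is exactly uniform on that large set and is literally contained in the superposition of all blocks; and finally correct the remaining small total-variation error with Strassen's theorem with deficiency (Corollary~\ref{lem:coupling}). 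This acceptance step, which reweights by the Radon--Nikodym derivative and exploits the $\omega(1)$ independent attempts, is precisely the idea missing from your argument; a similar remark applies to part (b), where your ``condition on simplicity and re-randomise a few layers'' step also distorts the law of $H$ by an amount you do not control, whereas the paper simply reruns the same rejection argument with the appropriate contiguity statement for $\G(n,d)$.
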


Our next result shows that when $d=\omega(1)$, $\P_*(n,d)$ can be decomposed as $\P_*(n,d-1)+\M_1^+$, an analog of~\eqn{eq:decomposition}.

\begin{theorem} \label{thm:P-P}
    Let $d=o(n^{1/7}/\log n)$. Then, there exists a coupling $(P,P')$ such that
    $P\sim \P_*(n,d)+\M_1^\plus$, and $P'\sim \P_*(n,d+1)$ and
    \[
    \pr(P=P'')=1-O(\log d/d).
    \]
\end{theorem}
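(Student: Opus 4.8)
The plan is to realize both $\P_*(n,d)+\M_1^\plus$ and $\P_*(n,d+1)$ as projections of a single auxiliary object, so that the coupling becomes a matter of comparing two ways of generating essentially the same random pairing. Concretely, start from $P_0\sim\P_*(n,d)$ and an independent uniform perfect matching $M\sim\M_1^\plus$; their superposition is a pairing on $(d+1)n$ points where every vertex-bin has $d+1$ points, but it is \emph{not} distributed as $\P_*(n,d+1)$ for two reasons. First, $M$ may create a loop at some vertex (the two points it contributes to a bin could be matched by $M$ to each other — wait, $M$ is a matching on $[n]$, so it contributes one point per bin and never a loop on its own), so the only loops come from an $M$-edge $uv$ where $u=v$, which cannot happen; the genuine discrepancy is rather in the conditioning and in the fact that $\P_*(n,d+1)$ is a uniform loopless pairing on the full $(d+1)n$ points whereas $P_0+M$ is only uniform subject to $M$ being a perfect matching across bins. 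I would make this precise by writing the Radon--Nikodym derivative of the law of $P_0+M$ with respect to $\P_*(n,d+1)$: it is a ratio of counting quantities (number of ways to extract from a loopless pairing on $(d+1)n$ points a sub-pairing that is a perfect matching on $[n]$ using one point per bin, versus the analogous unconditioned count), and the key claim is that this density is $1+O(\log d/d)$ uniformly on an event of probability $1-O(\log d/d)$.

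The main step is therefore an \emph{enumeration / concentration} estimate. Given a uniformly random loopless pairing $P'\sim\P_*(n,d+1)$, I would count $N(P')$, the number of perfect matchings $M$ on $[n]$ such that $M$, viewed as a set of pairs, is a sub-pairing of $P'$ using exactly one point from each bin; equivalently $N(P')$ counts systems of distinct representatives that form pairs in $P'$. Summing $N(P')$ over all loopless $P'$ and dividing by $|\P_*(n,d+1)|$ gives the expected value, which should be $(1+o(1))$ times a clean closed form (morally $(d+1)^n$ times the number of perfect matchings on $[n]$ divided by an appropriate falling-factorial normalization). The coupling then succeeds whenever $N(P')$ is within $(1+O(\log d/d))$ of this expectation. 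To prove this concentration I would use the switching method on loopless pairings — local double-switches that swap two pairs — to control how $N$ changes, exactly the technology underlying~\eqref{eq:decomposition} and Theorem~\ref{thm:smalld}; since $d=o(n^{1/7}/\log n)$, the error terms from short cycles and repeated structures are $O(\log d/d)$ as required. Equivalently, and perhaps more cleanly, I would run the switchings directly on the pair $(P_0,M)$: starting from $P_0+M$, identify the ``defect'' pairs (those incident to a vertex where $M$ and $P_0$ interact badly, e.g.\ producing a double edge that makes the combined multigraph differ from a uniform loopless pairing) and show a sequence of $O(\log d)$ switchings brings it to the uniform distribution $\P_*(n,d+1)$ with the stated success probability.

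The hardest part will be handling the \emph{loopless} conditioning coherently on both sides simultaneously: $\P_*(n,d)$ already forbids within-bin pairs, and after adding $M$ we must land in $\P_*(n,d+1)$, so the switchings must preserve looplessness while still mixing enough. The switching graph on loopless pairings is slightly more delicate than on unrestricted pairings because a switch can inadvertently create a loop, and one must either forbid such switches (costing a small fraction of moves) or repair them; quantifying that this costs only $O(\log d/d)$ and not more is where the hypothesis $d=o(n^{1/7}/\log n)$ is used in full strength. A secondary technical point is that $M$ contributes a \emph{structured} (perfect-matching) layer rather than a generic pair, so the count $N(P')$ is a permanent-like quantity rather than a simple product; I expect to control it by comparing with the unconditioned configuration model count and absorbing the difference into the $O(\log d/d)$ error, citing the same simple-graph probability asymptotics that yield~\eqref{eq:decomposition}. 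Once the density bound is in hand, the coupling is the standard maximal coupling of two measures whose ratio is $1+O(\log d/d)$ on a high-probability event, giving $\pr(P=P')=1-O(\log d/d)$.
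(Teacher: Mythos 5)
Your overall architecture matches the paper's: the number of ways to write $P'\sim\P_*(n,d+1)$ as $P+M$ is exactly the number $Y(P')$ of perfect matchings contained in $P'$, every pair $(P,M)$ produces a unique $P'$, and once $Y$ is shown to be concentrated around its mean one gets the coupling from a Strassen-type statement (the paper uses the bipartite-degree formulation, Corollary~\ref{cor:coupling2}, rather than your Radon--Nikodym/maximal-coupling phrasing, but these are the same device). The genuine gap is at the central step, the concentration of $Y$ at relative precision $O(\log d/d)$ with failure probability $O(\log d/d)$: you assert it and attribute it to ``the switching method'' with ``error terms from short cycles $O(\log d/d)$'', but no mechanism you describe delivers it. The second-moment computation (the paper's \eqref{eq:conc2}) gives $\Var Y/(\ex Y)^2=\tfrac{1}{4d^2}+O(d^{-3})$, i.e.\ the fluctuations of $Y$ are \emph{genuinely} of relative order $1/d$ --- they are driven by the roughly Poisson$((d-1)^2/4)$ number of double edges --- so no bounded-difference/switching argument can concentrate $Y$ at a scale finer than $1/d$, and Chebyshev alone at window $\eps$ gives failure probability $\approx 1/(4d^2\eps^2)$; optimizing $\eps+1/(2d^2\eps^2)$ yields only $O(d^{-2/3})$ for the coupling, well short of $O(\log d/d)$.

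What is missing is the idea the paper uses to beat Chebyshev: a small-subgraph-conditioning step. One must identify the explanatory statistics --- the number of double edges $X$ and triangles $W$ --- compute their covariances with $Y$ (Lemma~\ref{lem:covariance}), and show that the linear projection $Y^*=aX+bW+c$ absorbs almost all of the variance, $\Var(Y-Y^*)=O(d^{-4}+\cdots)(\ex Y)^2$ (Lemma~\ref{lem:Ystar}); then separate switching-based tail bounds on $X$ and $W$ (Lemma~\ref{lem:concentrationXW}) give $|Y-\ex Y|=O(\sqrt{\log d}/d)\,\ex Y$ with probability $1-O(d^{-2})$, which is exactly what Corollary~\ref{cor:coupling2} needs. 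Your alternative suggestion --- ``a sequence of $O(\log d)$ switchings brings $(P_0,M)$ to the uniform distribution $\P_*(n,d+1)$'' --- is not a workable substitute: switchings compare cardinalities of classes of pairings, they do not transport a non-uniform law to a uniform one with a quantified failure probability, and in any case the obstruction is distributional (the $1/d$-scale fluctuation of $Y$), not a matter of local repairs. So the skeleton is right, but the heart of the proof --- why $Y$ concentrates at the stated scale --- is absent, and the route you indicate for it would fail.
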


If the probability bound $O(\log d/d)$ were summable over $d$, then by repeatedly applying Theorem~\ref{thm:P-P} and then Theorem~\ref{thm:smalld}, we could have obtained a coupling which embeds $\P_*(n,d)$ into $\M_{(1+o(1))d}^+$.  
However, the probability bound $O(\log d/d)$ is too large for a union bound.
Moreover, the probability bound $O(\log d/d)$ is optimal from our proof method, and we think that it might be the best possible among all couplings. 
To overcome this, we use an extra matching, and show that $\P_*(n,d+1)$ can be embedded into $\P_*(n,d)+\M_2^\plus$.

\begin{theorem}\label{thm:P-P2} Let $n$ be even. Suppose that $d=o(n^{1/7}/\log n)$.
    There is a coupling $(P,P')$ such that $P\sim \P_*(n,d)+\M_2^\plus$,
    $P'\sim \P_*(n,d+1)$, and $\pr(P'\subseteq P)= 1-O(\sqrt{\log d}/d^{1.5})$.
\end{theorem}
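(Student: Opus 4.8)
The plan is to bootstrap Theorem~\ref{thm:P-P}. That theorem already supplies a coupling of $\P_*(n,d)+\M_1^\plus$ with $\P_*(n,d+1)$ that succeeds with probability $1-O(\log d/d)$; the only thing that can go wrong is a low-probability ``bad event'' $\mathcal B$ on which the two pairings fail to agree. The idea is that when $\mathcal B$ occurs, the discrepancy between the coupled configurations is localized — it should be confined to a small number of bins (the ones touched by the loop-removal / switching surgery used to prove Theorem~\ref{thm:P-P}) — and a single extra perfect matching $\M_1^\plus$ has more than enough freedom to ``patch'' a localized discrepancy of bounded size. So first I would re-open the proof of Theorem~\ref{thm:P-P} and extract a quantitative statement of the form: conditioned on $\mathcal B$, with probability $1-O(\sqrt{\log d}/d^{1.5})$ the symmetric difference of the two pairings (viewed as partial matchings on the $n(d+1)$ points, after a suitable identification of point-sets) involves at most $O(1)$ bins and $O(1)$ pairs. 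The shift from $O(\log d/d)$ to $O(\sqrt{\log d}/d^{1.5})$ strongly suggests the relevant estimate is a second-moment / Cauchy--Schwarz refinement: the bad event $\mathcal B$ itself has probability $O(\log d/d)$, but the event that $\mathcal B$ occurs \emph{and} the repair using the extra matching fails is the square-root-smaller quantity, consistent with $\Pr(\mathcal B)\cdot(\text{conditional failure prob})\approx (\log d/d)\cdot (\sqrt{\log d}/d^{0.5}) \sim \sqrt{\log d}/d^{1.5}$ — actually more cleanly, $\Pr(\mathcal B)^{1/2}$ times a gain from the fresh randomness of the second matching.

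Second, given the localized-discrepancy description, I would build the coupling in two stages. Expose a copy of $\P_*(n,d)$ and run the Theorem~\ref{thm:P-P} coupling using the first matching $M_1$; if it succeeds we are done and $P'\subseteq P$ trivially. If it fails, we now have on hand a fresh, fully independent perfect matching $M_2\sim\M_1^\plus$, and we must show that with probability $1-O(\sqrt{\log d}/d^{1.5})$ (conditional on the failure) we can choose, inside $P + M_1$, a small set of switchings supported on the $O(1)$ affected bins — using some of the pairs of $M_2$ and re-routing — so that the resulting pairing contains (a relabelled copy of) $\P_*(n,d+1)$. Because only $O(1)$ bins are involved and $M_2$ is a uniform perfect matching on $n$ vertices, the probability that $M_2$ fails to contain a pair landing where we need it (to serve as a ``swap partner'' for each offending pair) is $O(1/n)=o(\sqrt{\log d}/d^{1.5})$ given $d=o(n^{1/7}/\log n)$, so this stage contributes a negligible additional failure probability. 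One has to be a little careful that the surgery produces the correct \emph{distribution} $\P_*(n,d+1)$ and not merely some $(d+1)$-regular loopless configuration — this is handled, as is standard in these arguments, by making the switching choices uniformly at random among all valid options and checking that the forward and backward switching counts match, so that the construction is measure-preserving on the relevant fibres.

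Third, I would assemble the two stages: on the good event of Theorem~\ref{thm:P-P} we keep the identity coupling; on the complement we use the $M_2$-patch; the union bound gives total failure probability $O(\log d/d)\cdot(\text{something})$, and the point of the refinement in step~one is precisely to make that product $O(\sqrt{\log d}/d^{1.5})$ rather than $O(\log d/d)$. Note that the conclusion here is only containment $P'\subseteq P$ rather than equality — this is why a cruder patch suffices than in Theorem~\ref{thm:P-P}: we need only that every pair of the target $(d+1)$-regular pairing appears among the pairs of $P=\P_*(n,d)+\M_2^\plus$, so discrepancies where $P$ has \emph{extra} pairs are harmless and only ``missing'' pairs need repair, which weakens the demand on $M_2$ considerably.

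The main obstacle will be step~one: squeezing the bad event of Theorem~\ref{thm:P-P} down to a \emph{localized, bounded-size} discrepancy with the claimed $\sqrt{\log d}/d^{1.5}$ tail. The proof of Theorem~\ref{thm:P-P} presumably controls only the probability that the natural identity coupling breaks, not the geometry of how it breaks; upgrading it to a structural statement — "when it breaks, it breaks small" — may require redoing that argument's switching analysis with an eye on the number of bins disturbed, and proving the relevant second-moment bound on the number of ``obstructions'' (short cycles / repeated edges created by the superposition) rather than just a first-moment bound. If the discrepancy is not genuinely $O(1)$ bins but, say, $O(\log n)$ bins, then a single extra matching will not suffice and the whole scheme needs adjusting (e.g. more matchings, or an iterative patch), so pinning down this locality is the crux.
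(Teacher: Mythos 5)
Your plan rests on a structural claim about how the coupling of Theorem~\ref{thm:P-P} fails --- that conditional on the bad event the two pairings differ only on $O(1)$ bins, so that a fresh matching can ``patch'' the defect --- and this is precisely where the argument breaks down. The coupling in Theorem~\ref{thm:P-P} is produced abstractly from a Strassen-type result (Corollary~\ref{cor:coupling2}) applied to the bipartite graph whose edges are the identities $P+M=P'$; on the failure event it carries no locality information whatsoever. More importantly, the $\widetilde{\Theta}(1/d)$ failure rate is not caused by rare local defects but by a global distributional skew: the number of representations of $P'$ as $P+M$ is the perfect-matching count $Y(P')$, whose relative fluctuation is of order $1/d$ because it is driven by the fluctuating number of double edges of $P'$ (this is exactly the extra $\tfrac{1}{4d^2}$ term in \eqref{eq:conc2} compared with the $\tfrac{1}{6d^3}$ term in \eqref{eq:conc1}, as the remark after Lemma~\ref{lem:YPairing} points out). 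Pairings with an atypical double-edge count are systematically over- or under-weighted under $\P_*(n,d)+\M_1^\plus$ relative to $\P_*(n,d+1)$; the paper even suggests this $O(\log d/d)$ barrier may be unimprovable for the one-matching coupling. Such a skew cannot be repaired by $O(1)$ switchings supported on the pairs of one extra matching, and your quantitative heuristic does not match either: $\pr(\mathcal B)$ times the conditional repair-failure rate you posit gives $\log^{3/2}d/d^{3/2}$, not $\sqrt{\log d}/d^{3/2}$; the true source of the exponent $3/2$ is the drop of the second-moment ratio from $1+\Theta(d^{-2})$ to $1+\Theta(d^{-3})$ once one conditions on the double-edge count.

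The paper's route is different and goes through exactly the conditioning your scheme avoids. One works inside the classes $\P^{(i)}(n,d+k)$ of pairings with exactly $i$ vertex-disjoint double edges, proves second-moment concentration (Lemma~\ref{lem2:YPairing}, Corollary~\ref{cor2:YPairing}) at relative scale $\sqrt{\log d}/d^{1.5}$ for the number of perfect matchings and, crucially, for the refined counts $Y_a$ of matchings whose removal leaves a prescribed number of double edges; Corollary~\ref{cor:coupling2} then yields, for each $i$, a coupling $\P_*(n,d)+\M_1^\plus$ (conditioned on $i$ double edges) $=\P^{(i)}(n,d+1)$ up to error $O(\sqrt{\log d}/d^{1.5})$ (Theorem~\ref{thm:pairing-couple}). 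The two extra matchings are then used globally, not as a local patch: applying this twice gives $\P_*(n,d)+\M_2^\plus=\P_*(n,d+2)$ with the stated error (Corollary~\ref{cor:2matchings}), and the containment $\P_*(n,d+1)\subseteq\P_*(n,d+2)$ is obtained by monotonically coupling the (asymptotically Poisson) double-edge counts $i\le i'$ of the two models and invoking the $Y_a$ concentration to extract a $(d+1)$-regular sub-pairing with the right conditional law. If you want to salvage your outline, the step you would have to supply is essentially this conditional analysis, at which point the ``local patch'' framing is no longer doing any work.
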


As a corollary of Theorems~\ref{thm:smalld} and~\ref{thm:P-P2}, we obtain the following result about the relation between the two models $\P_*(n,d)$ and $\M_t^\plus$, as desired.

\begin{theorem}\label{thm:P-M} Let $n$ be even.
    Suppose that $d=\omega(1)$ and $d=o(n^{1/7}/\log n)$. There exist $\tau(d)=(2+o(1))d$, and a coupling $(P,H)$ such that
    \[
    P\sim \P_{*}(n,d),\quad H\sim \M_{\tau(d)}^\plus,\quad \pr(P\subseteq H)=1-o(1).
    \]
\end{theorem}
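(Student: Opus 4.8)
The plan is to build the coupling by starting from a small base configuration and growing it one (or two) matchings at a time, using Theorems~\ref{thm:P-P} and~\ref{thm:P-P2} to handle each increment and Theorem~\ref{thm:smalld} to convert the base case. Fix a slowly growing threshold $d_0=d_0(n)=\omega(1)$ (say $d_0=\lceil\log\log n\rceil$, which certainly satisfies $d_0=\omega(1)$ and $d_0=o(n^{1/7}/\log n)$), chosen small enough that the desired final multiplicity $\tau(d)$ will still be $(2+o(1))d$. For the base case $d=d_0$ we want $\P_*(n,d_0)\subseteq\M^\plus_{\tau_0}$ for some $\tau_0=\omega(1)$; this does not follow directly from Theorem~\ref{thm:smalld}, which is stated only for fixed $d\ge 3$, so I would first extract from its proof (or re-run the argument with $d\to\infty$ slowly) a version valid for $d_0=\omega(1)$ small. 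Concretely, since $d_0=\omega(1)$, we can apply Theorem~\ref{thm:P-P2} repeatedly starting from a truly constant $d=3$, accumulating $2(d_0-3)$ extra matchings with total failure probability $\sum_{d=3}^{d_0}O(\sqrt{\log d}/d^{1.5})=O(1)$—but this constant is not $o(1)$, so instead I would start the induction from a constant $d$ but absorb the base segment into Theorem~\ref{thm:smalld}(a) applied with a suitable $\tau=\omega(1)$, giving $\P_*(n,d_0)\subseteq\M^\plus_{\tau_0}$ with $\tau_0=\omega(1)$ but $\tau_0=o(d)$ (e.g.\ $\tau_0=d_0^2$), and failure probability $o(1)$.

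For the inductive step, I would iterate Theorem~\ref{thm:P-P2} from $d_0$ up to $d$. Each application of Theorem~\ref{thm:P-P2} embeds $\P_*(n,k+1)$ into $\P_*(n,k)+\M_2^\plus$ with failure probability $O(\sqrt{\log k}/k^{1.5})$. Summing over $k$ from $d_0$ to $d-1$, the total failure probability is
\[
\sum_{k=d_0}^{d-1} O\!\left(\frac{\sqrt{\log k}}{k^{1.5}}\right)
= O\!\left(\frac{\sqrt{\log d_0}}{\sqrt{d_0}}\right)
= o(1),
\]
since $d_0=\omega(1)$; this is precisely why Theorem~\ref{thm:P-P2} rather than Theorem~\ref{thm:P-P} is needed—the exponent $1.5>1$ makes the tail summable with room to spare, and the tail from $d_0$ onward vanishes. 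Composing these couplings (each step adds two fresh independent matchings, and the superposition of independent matchings is again a superposition of matchings, using the associativity of $+$ noted before Theorem~\ref{thm:smalld}), we obtain a single coupling in which $\P_*(n,d)$ is embedded into $\P_*(n,d_0)+\M^\plus_{2(d-d_0)}$ with failure probability $o(1)$.

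Finally I would chain this with the base-case embedding: $\P_*(n,d_0)\subseteq\M^\plus_{\tau_0}$, so altogether $\P_*(n,d)\subseteq \M^\plus_{\tau_0}+\M^\plus_{2(d-d_0)}=\M^\plus_{\tau(d)}$ where $\tau(d)=\tau_0+2(d-d_0)$. Since $\tau_0=o(d)$ and $d_0=o(d)$, we get $\tau(d)=2d+o(d)=(2+o(1))d$, as required, and the total failure probability is $o(1)+o(1)=o(1)$. The main technical obstacle I anticipate is the base case: Theorem~\ref{thm:smalld} as stated is only for fixed $d$, so some care is needed to run it (or the decomposition machinery) with $d_0=\omega(1)$ while keeping $\tau_0=o(d)$; one must check that the error terms in the proof of Theorem~\ref{thm:smalld}(a) remain $o(1)$ when $d$ is allowed to grow slowly, and that the resulting $\tau_0$ can be taken much smaller than $d$. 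Everything else is bookkeeping: verifying that the chain of couplings can be composed on a common probability space (standard, since each is a coupling of the marginal appearing as the target of the previous one), and that the multiplicities add up correctly.
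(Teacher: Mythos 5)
Your overall skeleton is the same as the paper's: iterate Theorem~\ref{thm:P-P2} from a small base degree up to $d$ (using summability of $O(\sqrt{\log k}/k^{1.5})$), and handle the base case by Theorem~\ref{thm:smalld}(a) with some $\tau=\omega(1)$, $\tau=o(d)$. However, there is a genuine gap in your base case. You take $d_0=\omega(1)$ (e.g.\ $\lceil\log\log n\rceil$) and then claim $\P_*(n,d_0)\subseteq\M^\plus_{\tau_0}$ a.a.s.\ by ``absorbing the base segment into Theorem~\ref{thm:smalld}(a).'' But Theorem~\ref{thm:smalld} is stated, and proved, only for \emph{fixed} $d\ge 3$: its proof rests on the mutual contiguity $\P_*(n,d)\vartriangleleft\vartriangleright\M_d^\plus$ (Lemma~3.1), which is obtained by small subgraph conditioning and is specific to constant $d$; nothing in the paper (or in your proposal) establishes this contiguity, or the resulting rejection-sampling embedding, when $d=d_0\to\infty$, however slowly. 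You correctly flag this issue and also correctly observe that iterating Theorem~\ref{thm:P-P2} from a truly constant start only gives total failure probability $O(1)$, not $o(1)$ --- but your proposed resolution is simply to assert the growing-$d_0$ version of Theorem~\ref{thm:smalld}(a), which is exactly the unproven step. As written, the argument does not close.

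The paper's proof avoids this by never letting the base degree grow: for each fixed $\eps>0$ it chooses a \emph{constant} $t=t(\eps)$ so large that $\sum_{j\ge t}O(\sqrt{\log j}/j^{1.5})<\eps$, applies Theorem~\ref{thm:P-P2} repeatedly to get $\P_*(n,d)\subseteq\P_*(n,t)+\M^\plus_{2(d-t)}$ with probability at least $1-\eps$, and then applies Theorem~\ref{thm:smalld}(a) legitimately (fixed $t$) with $\tau=\omega(1)$, $\tau=o(d)$, obtaining failure probability at most $\eps+o(1)$; since $\eps$ is arbitrary, a standard diagonalization over $\eps$ yields a single coupling with failure probability $o(1)$ and $\tau(d)=2(d-t)+\tau=(2+o(1))d$. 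If you replace your $d_0=\omega(1)$ base by this fixed-$t(\eps)$ base plus the $\eps\to 0$ argument, your proof becomes essentially the paper's; alternatively, you would have to actually prove a version of Theorem~\ref{thm:smalld}(a) (equivalently of the contiguity lemma) for slowly growing degree, which is a nontrivial additional task rather than bookkeeping.
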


\subsection{Embedding $\G(n,d)$ into $\P_*(n,d+1)$ } 

Given Theorem~\ref{thm:P-M}, it only requires a coupling between $\G(n,d)$ and $\P_*(n,t)$ to complete the final step in the proof of Theorem~\ref{thm:G-M}. This coupling is given below.
\begin{theorem}\label{thm:P-G} Let $n$ be even.
    Suppose that $d=o(n^{1/7}/\log n)$. Then, there is a coupling $(G_*,P^*)$ such that
    \[
    G_*\sim \G(n,d),\quad P^*\sim \P_*(n,d+1), \quad \pr(G_* \subseteq G[P^*])=1-O(\sqrt{\log d}/d^{1.5}).
    \]
    
\end{theorem}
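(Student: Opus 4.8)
The plan is to build the coupling directly, by exploiting the fact that both $\G(n,d)$ and $\P_*(n,d+1)$ can be generated by a sequential "bin-filling" procedure. Recall that a pairing in $\P_*(n,d+1)$ can be sampled by repeatedly choosing an unmatched point and matching it to a uniformly random unmatched point in a different bin; and $\G(n,d)$ can be sampled, via the standard enumeration/switching machinery (as in McKay--Wormald style arguments), by a similar sequential process in which we pick a vertex of current minimum residual degree and attach it to a partner chosen according to the (non-uniform) distribution dictated by the number of $d$-regular completions. The key observation is that for $d=o(n^{1/7}/\log n)$ the pairing $G[P^*]$ with $P^*\sim \P_*(n,d+1)$ is simple with probability $1-O(d^2/n)$ and, conditioned on simplicity, is distributed as a uniform $(d+1)$-regular graph. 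So it suffices to couple $\G(n,d)$ with a uniform $(d+1)$-regular simple graph $G^*$ so that $G_*\subseteq G^*$ with the claimed probability, and then set $P^*$ to be (a uniform pairing realizing) $G^*$, absorbing the $O(d^2/n)=o(\sqrt{\log d}/d^{1.5})$ simplicity loss into the error term.

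The heart of the argument is the coupling between $\G(n,d)$ and $\G(n,d+1)$ itself. I would use the decomposition philosophy behind~\eqref{eq:decomposition}: informally, a uniform $(d+1)$-regular graph looks like a uniform $d$-regular graph plus a near-perfect matching sprinkled on the "slack". Concretely, I would take $G^*\sim \G(n,d+1)$, expose it, and then try to delete a perfect matching $M$ of $G^*$ so that $G^*\setminus M\sim \G(n,d)$. A uniform random $(d+1)$-regular graph on an even number of vertices contains a perfect matching whp, and by a symmetrization/switching argument the distribution of $G^*$ minus a uniformly chosen perfect matching of $G^*$ is close in total variation to $\G(n,d)$; the discrepancy is controlled by the variance of the number of perfect matchings of $\G(n,d+1)$, which for $d$ in our range is concentrated enough to give a total-variation distance of $O(\sqrt{\log d}/d^{1.5})$ (the $\sqrt{\log d}$ factor coming from the same type of second-moment/concentration estimate that produces the error bound in Theorem~\ref{thm:P-P2}, which is unsurprising since the mechanism — removing one matching's worth of edges — is the same). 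Then $G_*:=G^*\setminus M\sim \G(n,d)$ under the coupling, and trivially $G_*\subseteq G^*$.

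The main obstacle I expect is establishing that removing a uniformly random perfect matching from $\G(n,d+1)$ yields something within $O(\sqrt{\log d}/d^{1.5})$ of $\G(n,d)$ in total variation — equivalently, bounding how non-uniform the number of perfect matchings is across $d$-regular-plus-a-matching graphs. This requires a concentration estimate for $M(G)$, the number of perfect matchings of $G\sim\G(n,d)$, sharp enough in the regime $d=o(n^{1/7}/\log n)$; the permanent/enumeration estimates of McKay--Wormald type, together with a switching analysis comparing $d$- and $(d+1)$-regular graphs, should deliver this, but the bookkeeping is delicate and is where the $d^{1.5}$ and $\sqrt{\log d}$ in the error term are really decided. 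A secondary technical point is handling the parity/feasibility conditions ($n$ even, $(d+1)n$ and $dn$ even) uniformly, and making sure the perfect matching we delete can always be chosen; since $n$ is even by hypothesis and $d=o(n^{1/7}/\log n)$, standard results guarantee $\G(n,d+1)$ is whp a "pretty dense" regular graph with many perfect matchings, so this causes only lower-order loss. Finally, one checks that the two error contributions — $O(d^2/n)$ from simplicity and $O(\sqrt{\log d}/d^{1.5})$ from the matching-deletion coupling — combine to $O(\sqrt{\log d}/d^{1.5})$, as claimed.
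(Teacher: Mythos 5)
Your reduction to the simple-graph case breaks at its first step. For $P^*\sim\P_*(n,d+1)$ the expected number of double edges of $G[P^*]$ is $(1+o(1))d^2/4$ (cf.\ Lemma~\ref{lem:covariance}), so $G[P^*]$ is simple with probability $e^{-\Theta(d^2)}$, not $1-O(d^2/n)$. Since the claimed bound $O(\sqrt{\log d}/d^{1.5})$ is trivial for bounded $d$, the only meaningful regime is $d\to\infty$, and there the simplicity event has vanishing probability; conditioning on it distorts $\P_*(n,d+1)$ completely, so the ``simplicity loss'' cannot be absorbed into the error term, and your construction never produces a coupling with the full loopless configuration model. This is precisely the difficulty the theorem addresses: the target $P^*$ is typically a multigraph with $\Theta(d^2)$ double edges, while the embedded $G_*$ must be simple, so the coupling has to say how the simple graph sits inside a genuinely non-simple pairing.

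The paper's proof handles this by conditioning on the number $i=O(d^2)$ of vertex-disjoint double edges, i.e.\ working inside $\P^{(i)}(n,d+1)$ (Lemma~\ref{lem:TypicalPairing} shows other configurations are negligible), and coupling $G\sim\G(n,d)$ with $M=G[P^*]$ via the bipartite graph in which $G$ is joined to $M$ exactly when $M\setminus G$ is a perfect matching covering every double edge of $M$: removing such a covering matching is what turns the multigraph back into a simple $d$-regular graph. The quantitative input is the second-moment estimate of Lemma~\ref{lem:YPairing} for the number of covering perfect matchings (relative variance of order $1/d^3$, whence the $\sqrt{\log d}/d^{1.5}$), fed into the Strassen-type Corollary~\ref{cor:coupling2}. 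Your second step --- deleting a uniform perfect matching from a $(d+1)$-regular graph and controlling the total-variation distance to $\G(n,d)$ through concentration of the number of perfect matchings --- is the simple-graph shadow of this argument (and note you would also need concentration of the number of perfect matchings avoiding a given $d$-regular graph, i.e.\ control of both sides of the bipartite degrees, not just of the matchings of $G^*$); but it cannot be repaired without first dealing with the double edges, which is the substance of the paper's proof.
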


\section{Tools}

Given two degree sequences $\bfd,\bfs$ defined on the same set of vertices, we say $\bfd\le \bfs$ if $\bfd_i\le \bfs_i$ for all $i$.  Given a graph $H$, let $\bfd^H$ denote the degree sequence of $H$. 

Let ${\mathbb N}$ denote the set of positive integers.
Let $U$ and $V$ be two disjoint sets. Let $\bfs\in {\mathbb N}^U$ and $\bft\in {\mathbb N}^V$, and let $\B(\bfs,\bft)$ be a uniform random bipartite graph on $U\cup V$ with degree sequence $(\bfs,\bft)$. Let $\Delta(\bfs)$ denote the maximum component in $\bfs$. Without loss of generality, we may assume that vertices in $U$ and $V$ are ordered so that $s_1\ge s_2\ge \cdots\ge s_{|U|}$ and $t_1\ge t_2\ge \cdots\ge t_{|V|}$. With $\bfd=(\bfs,\bft)$, let $ M(\bfd) =\sum_{u\in U}s_u=\sum_{v\in V}t_v$ and $J(\bfd) = \sum_{i=1}^{s_1} t_i + \sum_{j=1}^{t_1} s_j$. It follows that every bipartite graph $G$ on $U\cup V$ with degree sequence $(\bfs,\bft)$ has exactly $M$ edges. Moreover, for every $v$ in $G$, the number of 2-path starting at $v$ is at most $J(\bfd)$. Let $N(\bfs,\bft)$ denote the number of bipartite graphs with degree sequence $(\bfs,\bft)$.
The following estimation of $N(\bfs,\bft)$ was given by McKay~\cite{mckay1984asymptotics}.

\begin{theorem}\label{thm:bipartiteenum}
    Suppose that $M(\bfd)\to\infty$ where $\bfd=(\bfs,\bft)$ is a bipartite degree sequence, and $1\le \Delta(\bfd)^2<cM(\bfd)$ for some constant $c<1/6$. Then,
    \[
    N(\bfs,\bft)=\frac{M(\bfd)!}{\prod_{i}^{|U|}s_i! \prod_{j=1}^{|V|} t_j!} \exp\left(-\frac{\sum_{i=1}^{|U|}s_i(s_i-1) \sum_{j=1}^{|V|}t_j(t_j-1)}{2 M(\bfd)^2}+O(\Delta(\bfd)^4/M(\bfd))\right).
    \]
\end{theorem}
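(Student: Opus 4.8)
The plan is to prove the formula via the bipartite configuration model, following McKay~\cite{mckay1984asymptotics}. Write $M=M(\bfd)$, $\Delta=\Delta(\bfd)$ and
\[
\lambda:=\frac{\sum_{i=1}^{|U|}s_i(s_i-1)\,\sum_{j=1}^{|V|}t_j(t_j-1)}{2M^2},
\]
so the claim is $N(\bfs,\bft)=\bigl(M!/(\prod_i s_i!\,\prod_j t_j!)\bigr)\exp(-\lambda+O(\Delta^4/M))$. \textbf{Step 1: reduce to the probability that a random pairing is simple.} Give each $u\in U$ a bin of $s_u$ points and each $v\in V$ a bin of $t_v$ points; since both sides carry $M$ points, a uniformly random perfect matching $\Pi$ between the $U$-points and the $V$-points is well defined, and there are exactly $M!$ of them. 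Contracting bins turns $\Pi$ into a bipartite multigraph with degree sequence $(\bfs,\bft)$; it has no loops (the host graph is bipartite), so it is a \emph{simple} graph iff no two points of a common $U$-bin are matched into a common $V$-bin. Since each edge of a simple bipartite graph is determined by its endpoints, assigning within each bin the points to the incident edges shows that every simple bipartite graph with degree sequence $(\bfs,\bft)$ arises from exactly $\prod_i s_i!\,\prod_j t_j!$ pairings. Hence $N(\bfs,\bft)=\bigl(M!/(\prod_i s_i!\,\prod_j t_j!)\bigr)\Pr(\Pi\text{ is simple})$, and it remains to show $\Pr(\Pi\text{ is simple})=\exp(-\lambda+O(\Delta^4/M))$.

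\textbf{Step 2: estimate $\Pr(\Pi\text{ is simple})$ by switchings.} For $m\ge 0$ let $\mathcal C_m$ be the set of pairings for which the total number of unordered pairs $\{a,a'\}$ with $a,a'$ in a common $U$-bin and matched into a common $V$-bin equals $m$, so $\mathcal C_0$ is the set of simple pairings and $\sum_{m\ge 0}|\mathcal C_m|=M!$. I would introduce a switching mapping $\mathcal C_m$ into $\mathcal C_{m-1}$: pick one such repeated pair $\{a_1,a_2\}$ (in bin $u$, matched to $b_1,b_2$ in bin $v$), detach $a_2$, pick a matched pair $a_3b_3$ with generic bins, and replace the matched pairs $(a_2,b_2),(a_3,b_3)$ by $(a_2,b_3),(a_3,b_2)$. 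The number of valid $a_3b_3$ is $M$ minus the $O(\Delta^2+m)$ choices that touch $u$ or $v$ or a bin adjacent to them, or that destroy a multi-edge, so the forward count from $\Pi\in\mathcal C_m$ is $2m\,M\,(1+O(\Delta^2/M))$ (using $m\le\lambda\le\Delta^2/2$). The backward count from $\Pi'\in\mathcal C_{m-1}$ is $\sum_{uv\text{ a single edge}}(s_u-1)(t_v-1)$ up to lower-order terms; this is not determined by $\bfd$ alone, but one shows (by a secondary switching, or by a two-auxiliary-edge variant that avoids it) that it concentrates around $\bigl(\sum_i s_i^2-M\bigr)\bigl(\sum_j t_j^2-M\bigr)/M=2M\lambda$, giving
\[
\frac{|\mathcal C_{m-1}|}{|\mathcal C_m|}=\frac{m}{\lambda}\Bigl(1+O\bigl(\Delta^2/M\bigr)\Bigr).
\]
Telescoping yields $|\mathcal C_m|=|\mathcal C_0|\,(\lambda^m/m!)\exp(O(m\Delta^2/M))$; since $\lambda\le\Delta^2/2<M/12$ the weights $\lambda^m/m!$ concentrate at $m=O(\lambda)$, where the correction is $\exp(O(\Delta^4/M))$ (a crude one-sided bound on the ratio disposes of $m\gg\lambda$). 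Summing over $m$ and dividing by $M!=\sum_m|\mathcal C_m|$ gives $\Pr(\Pi\text{ is simple})=|\mathcal C_0|/M!=\exp(-\lambda+O(\Delta^4/M))$. (Alternatively, the same estimate follows by proving that the number of repeated pairs is approximately Poisson with mean $\lambda$ through its factorial moments and Bonferroni's inequalities.)

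\textbf{The main obstacle} is making the switching estimates \emph{uniform} over the classes $\mathcal C_m$ and tracking how the per-step errors accumulate. Three points need care: (i) pairings carrying a parallel edge of multiplicity $\ge 3$ break the ``one repeated pair at a time'' bookkeeping and must be absorbed into the $O(\Delta^4/M)$ error, which works because higher multiplicities are polynomially rarer, the governing small parameter being $\Delta^2/M$; (ii) the backward count involves the graph-dependent quantity $\sum_{uv\text{ edge}}s_ut_v$, whose concentration around its typical value must be established for every class $\mathcal C_m$; and (iii) the factor $1+O(\Delta^2/M)$ in the ratio is compounded over up to $m=O(\lambda)=O(\Delta^2)$ steps, which is exactly what produces the $\exp(O(\Delta^4/M))$ in the exponent, so one must simultaneously check that $\sum_m|\mathcal C_m|/|\mathcal C_0|$ converges (its tail is controlled by the Poisson tail of mean $\lambda$) and introduces no further error. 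The hypothesis $1\le\Delta^2<cM$ with $c<1/6$ is precisely what keeps all the forward/backward counts and forbidden-configuration bounds in a safe range over this range of $m$; the particular constant $1/6$ comes from the crude bounds used there and is not expected to be essential. With these uniform estimates in hand, the theorem follows by combining Step 1 and Step 2, the main term $M!/(\prod_i s_i!\,\prod_j t_j!)$ requiring no approximation since it is exact.
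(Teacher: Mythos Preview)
The paper does not prove Theorem~\ref{thm:bipartiteenum}; it is quoted verbatim as a known enumeration result of McKay~\cite{mckay1984asymptotics} and used thereafter as a black box, so there is no in-paper argument to compare your proposal against. Your sketch is a faithful outline of the configuration-model-plus-switchings route that underlies results of this type, and as a plan it is sound.

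Two small remarks on the write-up. First, the clause ``using $m\le\lambda\le\Delta^2/2$'' in your forward-count estimate is not correct as written: the class index $m$ is not bounded by the mean $\lambda$, and indeed you need the forward/backward bounds to hold for $m$ well beyond $\lambda$. Your later treatment --- Poisson concentration of $\lambda^m/m!$ at $m=O(\lambda)$ together with a crude one-sided ratio bound to kill the tail $m\gg\lambda$ --- is the right repair; just make sure the per-step error is stated for all relevant $m$, not only $m\le\lambda$. Second, for the graph-dependent backward count $\sum_{uv\text{ edge}}(s_u-1)(t_v-1)$ you flag the difficulty correctly, but the cleanest device is usually not to prove concentration of that sum over each class $\mathcal C_{m-1}$; rather, one either (a) uses a switching that moves \emph{two} generic pairs so that the backward count becomes a product of two one-sided sums and hence is determined by $\bfd$ up to $O(\Delta^2/M)$, or (b) simply bounds $|\mathcal C_{m-1}|/|\mathcal C_m|$ above and below separately and checks that both telescoped products yield $e^{-\lambda}\exp(O(\Delta^4/M))$. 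Either fits inside your outline.
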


Given a bipartite graph $H$ on $U\cup V$, let $H^+$ and $H^-$ denote the events that $H\subseteq \B(\bfs,\bft)$ and $H\cap \B(\bfs,\bft)=\emptyset$ respectively. Let $e(H)$ and $\Delta(H)$ denote the number of edges, and the maximum degree of $H$. The conditional edge probability $\pr\big(uv\in \B(\bfs,\bft)\mid H_1^+,H_2^-\big)$ was estimated in~\cite[Theorem 1]{gao2023subgraph}. In this paper we need a bipartite version of it, given below. 
\begin{theorem}\label{thm:bipartiteEdgeProb}
    Let $H_1$ and $H_2$ be two disjoint bipartite graphs on $U\cup V$. Suppose that $\bfd^{H_1}\le \bfd$ where $\bfd:= (\bfs,\bft)$, and $uv\in (U\times V)\setminus (H_1\cup H_2)$. Then,
    \begin{align*}
   \pr\big(uv\in \B(\bfs,\bft)\mid H_1^+,H_2^-\big)= \frac{(d_u-d^{H_1}_u)(d_v-d^{H_1}_v)}{M-e(H_1)}\left(1+O\left(\xi\right)\right),
    \end{align*}
    where 
    \begin{align*}
        \xi &= \frac{J(\bfd)+\Delta(\bfd)(1+\Delta(H_2))+(d_u-d^{H_1}_u)(d_v-d^{H_1}_v)}{M-e(H_1)}+\frac{e(H_2)\Delta(\bfs)\Delta(\bft)}{(M-e(H_1))^2}.
    \end{align*}
    provided that $\xi=o(1)$.
\end{theorem}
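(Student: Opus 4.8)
The plan is to write the conditional probability as a ratio of graph counts and estimate that ratio by a switching argument, paralleling the non‑bipartite case~\cite[Theorem~1]{gao2023subgraph}. For a bipartite graph $F$ on $U\cup V$ with $\bfd^F\le\bfd$, let $N(F^+,H_2^-)$ be the number of bipartite graphs on $U\cup V$ with degree sequence $\bfd$ that contain $F$ and are edge‑disjoint from $H_2$; then $\pr(uv\in\B(\bfs,\bft)\mid H_1^+,H_2^-)=N((H_1{+}uv)^+,H_2^-)/N(H_1^+,H_2^-)$. Write $M^*=M-e(H_1)$, $d^*_u=d_u-d^{H_1}_u$, $d^*_v=d_v-d^{H_1}_v$; if either of the last two vanishes both sides are $0$, so assume $d^*_u,d^*_v\ge1$. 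Deleting the forced edges shows that $\B(\bfs,\bft)$ conditioned on $H_1^+$ is $H_1$ together with a uniform random bipartite graph on $U\cup V$ with degree sequence $\bfd-\bfd^{H_1}$ that avoids $E(H_1)$, which explains why these modified quantities appear in the main term.

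First I would reduce to the case $H_2=\emptyset$. Since $uv\notin H_2$, the chain rule gives
\[
\pr\big(uv\in\B\mid H_1^+,H_2^-\big)=\pr\big(uv\in\B\mid H_1^+\big)\cdot\frac{\pr\big(H_2^-\mid (H_1{+}uv)^+\big)}{\pr\big(H_2^-\mid H_1^+\big)}.
\]
Expanding each avoidance probability as a telescoping product over the $e(H_2)$ edges of $H_2$ — the $j$‑th factor being $1$ minus the conditional probability of the $j$‑th edge given $H_1^+$, resp.\ $(H_1{+}uv)^+$, and the first $j-1$ edges of $H_2$ forbidden — I would compare the two products factor by factor, using the theorem inductively on $e(H_2)$ to evaluate these conditional edge probabilities (the forbidden graph now has at most $e(H_2)$ edges, hence maximum degree $\le\Delta(H_2)$). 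The $j$‑th factors of the two products differ only through the one extra forced edge $uv$: for an edge of $H_2$ disjoint from $\{u,v\}$ the relevant probability changes relatively by $O(1/M^*)$, contributing $O\big(e(H_2)\Delta(\bfs)\Delta(\bft)/M^{*2}\big)$ overall, while for one of the at most $2\Delta(H_2)$ edges of $H_2$ meeting $u$ or $v$ it changes additively by $O(\Delta(\bfd)/M^*)$, contributing $O\big(\Delta(\bfd)\Delta(H_2)/M^*\big)$ overall. These are exactly the two $H_2$‑terms of $\xi$, so it remains to treat $\pr(uv\in\B\mid H_1^+)$ with error $\xi_0:=\big(J(\bfd)+\Delta(\bfd)+d^*_ud^*_v\big)/M^*$.

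For $H_2=\emptyset$ I would use the single switching that rotates $uv$ out: in a graph $G$ containing $H_1$ and $uv$, pick another edge $xy\in E(G)$ and replace $uv,xy$ by $uy,xv$; this is admissible precisely when $x\ne u$, $y\ne v$, $xy\notin H_1$, and $uy,xv\notin E(G)$. Let $\mathcal A$ (resp.\ $\mathcal A'$) be the set of bipartite graphs with degree sequence $\bfd$ containing $H_1$ that do (resp.\ do not) contain $uv$. From any $G\in\mathcal A$ the admissible switchings number $M$ less the $e(H_1)$ edges of $H_1$ and less $O(\Delta(\bfd)+J(\bfd))=O(J(\bfd))$ further inadmissible edges — those meeting $u$ or $v$, and those completing a two‑path through $u$ or $v$, at most $J(\bfd)$ of each by the bound recorded after the definition of $J(\bfd)$ together with $\Delta(\bfd)\le J(\bfd)$ — so this count is $M^*(1+O(\xi_0))$ \emph{uniformly} over $\mathcal A$. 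Into a given $G'\in\mathcal A'$, the admissible switchings number $d^*_ud^*_v$ less the number of \emph{blocked} pairs (a non‑$H_1$ neighbour $y$ of $u$ and a non‑$H_1$ neighbour $x$ of $v$ with $xy\in E(G')$). This second count need not be uniform — it can vanish when $d^*_ud^*_v$ is small — but its \emph{sum} over $\mathcal A'$ is $|\mathcal A'|\,d^*_ud^*_v\,(1+O(\xi_0))$, because each blocked pair is a copy in $G'$ of the three‑edge configuration $\{uy,xv,xy\}$, and summing over all such configurations the number of graphs in $\mathcal A'$ containing one is $O(|\mathcal A'|\,d^*_ud^*_v\,\xi_0)$ by iterating a crude one‑sided edge‑probability bound (obtained from a one‑way switching). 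Equating the two counts of switching pairs gives $|\mathcal A|/|\mathcal A'|=(d^*_ud^*_v/M^*)(1+O(\xi_0))$, and since this ratio is $O(\xi_0)=o(1)$, also $\pr(uv\in\B\mid H_1^+)=|\mathcal A|/(|\mathcal A|+|\mathcal A'|)=(d^*_ud^*_v/M^*)(1+O(\xi_0))$, as needed.

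The step I expect to be the main obstacle is precisely this non‑uniform reverse direction: one cannot control the number of admissible switchings into an individual $G'$, and must instead bound the aggregate number of blocked pairs through an auxiliary count of graphs containing the short configuration $\{uy,xv,xy\}$, which requires the preliminary crude one‑sided edge‑probability estimate and careful bookkeeping so that all contributions — edges meeting $u$ or $v$, two‑paths (the $J(\bfd)$ term), interaction with $H_1$ and $H_2$ (the $\Delta(\bfd)(1+\Delta(H_2))$ term), multiple‑edge creation (the $d^*_ud^*_v$ term), and the second‑order $H_2$ correction — collapse exactly to $O(\xi)$. It is natural to proceed through switchings rather than Theorem~\ref{thm:bipartiteenum} here, since the target error $O(\xi)$ is generally sharper than the $O(\Delta(\bfd)^4/M)$ error of that formula and switchings compare the two nearby degree sequences directly.
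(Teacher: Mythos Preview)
Your approach departs from the paper's in two places. First, the paper does not reduce to $H_2=\emptyset$; it carries out the switching directly in the space conditioned on $H_1^+\cap H_2^-$. Second --- and this is where the real gap lies --- the paper (following~\cite{gao2023subgraph}) does not use the simple $4$-cycle switching $\{uv,xy\}\mapsto\{uy,xv\}$. It uses a switching indexed by a $4$-tuple $(x,a,y,b)$ with $a,y\in U$ and $x,b\in V$: from $G\ni uv$ one deletes $uv,ax,yb$ and adds $ub,av,yx$. With this choice the counts are $f(G)=(M^*)^2(1+O(\xi))$ and $b(G')=d_u^*d_v^*M^*(1+O(\xi))$, \emph{both holding uniformly over $G,G'$}, because every correction to $b(G')$ --- for instance the constraint $ax\notin G'$, which for fixed $a$ rules out at most $J(\bfd)$ of the $M^*$ choices of the free edge $yx$ --- is bounded by $d_u^*d_v^*$ times one of $J(\bfd)$, $\Delta(\bfd)(1{+}\Delta(H_2))$, etc., hence by $d_u^*d_v^*M^*\cdot O(\xi)$.

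Your $4$-cycle switching has only $b(G')=d_u^*d_v^*-\#\mathrm{blocked}(G')$, with no such cushion, and the averaging repair you outline does not deliver the stated bound. Iterating the one-sided inequality $\pr(ab\mid F^+)\le(d_a-d_a^F)(d_b-d_b^F)/(M-e(F)-O(J))$ over the three edges $uy,xy,xv$ gives at best
\[
\E\bigl[\#\mathrm{blocked}\mid H_1^+\bigr]\ \lesssim\ \frac{d_u^*d_v^*}{(M^*)^3}\Bigl(\sum_{x\in U}\delta_x^2\Bigr)\Bigl(\sum_{y\in V}\delta_y^2\Bigr),\qquad \delta_w:=d_w-d_w^{H_1},
\]
and this is not $O(d_u^*d_v^*\xi_0)$ in general: the second-moment sums are governed by $\Delta(\bfs)\Delta(\bft)$, not by $J(\bfd)$, and the one-sided bound is already vacuous whenever $\delta_x\delta_y>M^*$. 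Put differently, when $d_u^*=d_v^*=1$ you would need the (random) sole residual neighbour $y$ of $u$ and the sole residual neighbour $x$ of $v$ to be adjacent with probability $O(J(\bfd)/M^*)$; but $y,x$ are biased towards high-degree vertices, and nothing in ``iterate crude bounds'' pins this probability to $J(\bfd)/M^*$ rather than to quantities like $\Delta(\bfs)\Delta(\bft)/M^*$. Even if the estimate $\E[\#\mathrm{blocked}]=O(d_u^*d_v^*\xi_0)$ turns out to be true, proving it is essentially a sharp count of $3$-paths from $u$ to $v$ in $\B(\bfs,\bft)$ conditioned on $H_1^+$, which is at least as hard as the theorem itself. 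The obstacle you flagged is the genuine one; the paper resolves it not by averaging but by enlarging the switching so that the reverse count already carries a factor of $M^*$, which is exactly what makes the lower bound on $b(G')$ uniform.
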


{\bf Remark}. It is possible and fairly straightforward to specify the coefficients in $O(\xi)$ (different coefficients for the upper and the lower bounds) as in~\cite[Theorem 1]{gao2023subgraph}. But for simplicity we state the slightly weaker probability bound for $\pr\big(uv\in \B(\bfs,\bft)\mid H_1^+,H_2^-\big)$ which is sufficient for the purpose of this paper.

\proof The proof follows exactly the proof of~\cite[Theorem 1]{gao2023subgraph} by defining, and estimating the number of, switchings between the set of bipartite graphs in the probability space $\G(\bfd)$, conditioned to $H_1^+,H_2^-$ and containing the edge $uv$, and the set of bipartite graphs in the same probability space, conditioned to $H_1^+,H_2^-$ but to the graphs that do not contain the edge $uv$. The switchings are defined exactly the same as in~\cite{gao2023subgraph}, except that the vertices are chosen with respect to the vertex bipartition. More precisely, the 4-tuple $(x,a,y,b)$ specified by a switching must satisfy $x,b\in V$ and $a,y\in U$. Then, the rest of the proof of~\cite[Theorem 1]{gao2023subgraph} follows exactly the same by estimating $f$ and $b$, the number of forward and backward switchings, with~\cite[Claim 14]{gao2023subgraph} replaced by the following new (and less accurate) bound
\[
(M-e(H_1))^2\left(1-C\xi\right) \le f(G)\le (M-e(H_1))^2,
\]
 and with~\cite[Claim 15]{gao2023subgraph} replaced by
\[
(d_u-d^{H_1}_u)(d_v-d^{H_1}_v)(M-e(H_1))(1-C\xi)\le b(G')\le (d_u-d^{H_1}_u)(d_v-d^{H_1}_v)(M-e(H_1)),
\]
where $C>0$ is some sufficiently large constant.  
\qed 

We also need the following estimate by McKay~\cite{mckay1985asymptotics} on the number of graphs with a given degree sequence avoiding a specified set of edges. Given a nonbipartite degree sequence $\bfd=(d_1,\ldots,d_n)$, let $M(\bfd)=\sum_{i=1}^n d_i$.
\begin{theorem}\label{thm:subgraphenum}
    Let $\bfg=(g_1,\ldots,g_n)$ be a degree sequence on $[n]$ and $X$ be a simple graph on $[n]$ with degree sequence $\bfx$. Suppose $\Delta(\bfg)\ge 1$, $\hat\Delta(\bfg)=o(M(\bfg))$ where $\hat\Delta(\bfg)=\Delta(\bfg)^2+\Delta(\bfg)\Delta(\bfx)$. Define
    \[
    \lambda(\bfg)=\frac{1}{2M(\bfg)}\sum_{j=1}^n g_i(g_i-1),\quad \mu(\bfg,X)=\frac{1}{M(\bfg)}\sum_{ij\in X} g_ig_j.
    \]
   Let $N(\bfg,X)$ denote the number of simple graphs with degree sequence $\bfg$ and which no edge in common with $X$. Then,
    \[
    N(\bfg,X)=\frac{M(\bfd)!}{(M(\bfd)/2)!2^{M(\bfd)/2}\prod_{i=1}^n g_i!} \exp(-\lambda(\bfg)-\lambda(\bfg)^2-\mu(\bfg,X)+O(\hat\Delta(\bfg)^2/M(\bfg))).
    \]
\end{theorem}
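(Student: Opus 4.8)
The plan is to establish the formula by McKay's switching method in the configuration (pairing) model. Write $M=M(\bfg)$ and let $P$ be a uniformly random pairing of $M$ points distributed into bins of sizes $g_1,\dots,g_n$, so that the number of pairings equals $(M-1)!!=M!/(2^{M/2}(M/2)!)$. A standard bijection shows that every simple graph on $[n]$ with degree sequence $\bfg$ arises as the multigraph $G[P]$ (obtained by contracting each bin to a vertex) for exactly $\prod_i g_i!$ pairings, and the same count holds for every simple graph that moreover shares no edge with $X$. Hence
\[
N(\bfg,X)=\frac{M!}{2^{M/2}(M/2)!\,\prod_i g_i!}\;\pr\big(G[P]\text{ is simple and }E(G[P])\cap X=\emptyset\big),
\]
and it remains only to show that the probability on the right equals $\exp\!\big(-\lambda(\bfg)-\lambda(\bfg)^2-\mu(\bfg,X)+O(\hat\Delta(\bfg)^2/M)\big)$.

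To estimate that probability, introduce three ``defect'' statistics of $P$: the number $L(P)$ of loop pairs; the number $D(P)$ of unordered pairs of parallel pairs (so $D(P)=\sum_{ij}\binom{m_{ij}}{2}$ with $m_{ij}$ the multiplicity of $ij$ in $G[P]$, and $D(P)=0$ exactly when $G[P]$ has no multiple edge); and the number $Z(P)$ of pairs of $P$ whose two endpoints lie in bins $i,j$ with $ij\in X$. The target probability is $\pr(L=D=Z=0)$. A first-moment calculation gives $\E L=\lambda+O(\hat\Delta/M)$, $\E D=\lambda^2+O(\hat\Delta^2/M)$ (the correction absorbing the contribution of triple-or-higher multiplicities), and $\E Z=\mu+O(\hat\Delta/M)$, and heuristically $L,D,Z$ should be asymptotically independent Poisson variables with means $\lambda,\lambda^2,\mu$. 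To make this rigorous I would, for each defect type, define a local switching that decreases that statistic by one while preserving $\bfg$: to remove a loop $\{p,q\}$ at a vertex $v$, pick a pair $\{p',q'\}$ avoiding the relevant bins and replace the two pairs by $\{p,p'\},\{q,q'\}$; use the analogous four-point switching to remove one edge of a multiple edge; and reroute a single pair to remove an $X$-edge. Partitioning pairings into strata $\mathcal C_\ell$ according to the number $\ell$ of defects of the given type (with the other statistics frozen) and counting forward and backward switchings between $\mathcal C_\ell$ and $\mathcal C_{\ell-1}$ yields
\[
\frac{|\mathcal C_\ell|}{|\mathcal C_{\ell-1}|}=\frac{m}{\ell}\,\Big(1+O\big(\hat\Delta/M\big)\Big),\qquad m\in\{\lambda,\ \lambda^2,\ \mu\},
\]
the error accounting for the forbidden or coincident choices in the switching. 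Since $m=O(\hat\Delta)$ in each case, telescoping over $\ell$ in the typical range $\ell=O(\hat\Delta+\log M)$ — strata outside this range being negligible since the ratio eventually drops below $1$ — gives $\pr(\text{that statistic}=0)=e^{-m}\big(1+O(\hat\Delta^2/M)\big)$.

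To conclude, I would remove the defects in succession — first all loops, then all multiple edges, then all $X$-edges — checking at each stage that a switching of one type creates defects of the remaining types only within the same $O(\hat\Delta/M)$ relative error, so the three near-Poisson factors multiply: $\pr(L=D=Z=0)=e^{-\lambda}e^{-\lambda^2}e^{-\mu}\big(1+O(\hat\Delta^2/M)\big)$. Substituting into the displayed identity for $N(\bfg,X)$ finishes the proof. (An alternative route computes the joint falling-factorial moments $\E\big[(L)_a(D)_b(Z)_c\big]$ and applies a Bonferroni/Poisson-approximation bound; it requires essentially the same counting.)

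The main obstacle is holding the total error down to exactly $O(\hat\Delta^2/M)$. Each individual switching ratio already carries relative error of order $\hat\Delta/M$ — from corrections to the forward count when the ``generic'' pair happens to lie in a bin touched by the structure being removed or inside another loop, multiple edge, or $X$-edge, and from the symmetric corrections to the backward count — and since the telescoping runs over $\Theta(\hat\Delta)$ steps, the bound $\hat\Delta^2/M$ is tight: every such correction must be tracked, and no cruder estimate of the forward/backward counts can be tolerated. Two subsidiary points need care: showing that pairings with an atypically large number of loops, multiple edges, or $X$-edges contribute negligible mass, and correctly handling the interaction of the three switching types (for instance a loop-removing switch that inadvertently produces a multiple edge or an $X$-edge) within the same error budget. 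This bookkeeping is exactly the technical heart of McKay's argument.
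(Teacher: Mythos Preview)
The paper does not prove this statement at all: Theorem~\ref{thm:subgraphenum} is quoted verbatim as a known tool from McKay~\cite{mckay1985asymptotics}, with no argument supplied. So there is no ``paper's own proof'' to compare against.

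Your sketch is in fact an accurate outline of McKay's original switching argument, and the overall strategy is sound: represent $N(\bfg,X)$ as $(M-1)!!/\prod_i g_i!$ times the probability that a random pairing is loop-free, multi-edge-free, and avoids $X$; then stratify by the three defect counts and use local switchings to compute the ratios $|\mathcal C_\ell|/|\mathcal C_{\ell-1}|$. Your bounds $\lambda=O(\Delta(\bfg))$, $\lambda^2=O(\hat\Delta)$, $\mu=O(\hat\Delta)$ are correct (for the last, note $\sum_{ij\in X}g_ig_j\le\Delta(\bfg)\sum_i g_i x_i\le\Delta(\bfg)\Delta(\bfx)M$), and telescoping an $O(\hat\Delta/M)$ relative error over $O(m)$ steps gives the required $O(m\hat\Delta/M)=O(\hat\Delta^2/M)$ total error. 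The two caveats you flag --- truncating the tail in $\ell$ and controlling cross-contamination between the three switching types --- are exactly the places where the bookkeeping lives, and you have identified them correctly. What you have written would pass as a faithful high-level summary of the proof in~\cite{mckay1985asymptotics}; filling in the details is routine but lengthy, which is presumably why the present paper simply cites the result.
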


The next lemma will be helpful to construct couplings of variables.

Suppose $X\sim (\Omega_X,\mu_X)$ and $Y\sim (\Omega_Y,\mu_Y)$ are two random variables from with finite sample spaces $\Omega_X$ and $\Omega_Y$. Let $\B\subseteq \Omega_X\times \Omega_Y\in $.
We refer to Koperberg \cite[Proposition 6]{Koperberg2024} for the proof of the following result.

\begin{theorem}[Strassen’s Theorem with deficiency \cite{Koperberg2024}]\label{T:couplingNew}
Let $\epsilon \in [0,1]$.
There exists a coupling $(X,Y)\in \Omega_X\times \Omega_Y$ with marginal distributions $\mu_X$ and $\mu_Y$ such  
that
$\Pr(XY \in \calB)\leq \epsilon$   if and only if 
\begin{equation}\label{eq:Hall}
    \mu_X(A) \leq \mu_Y(N(A)) + \epsilon \quad \text{for any $A \subseteq \Omega_X$,}
\end{equation}
where $N(A) = \{y\in \Omega_Y \st \text{there is $x\in \Omega_x$ that } (x,y) \notin \calB\}$. 
\end{theorem}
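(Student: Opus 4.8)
The statement is the finite-space version of Strassen's coupling theorem carrying a deficiency parameter, so I would prove it by treating the two implications separately. The ``only if'' direction is a one-line union bound: suppose a coupling $(X,Y)$ with marginals $\mu_X,\mu_Y$ and $\Pr(XY\in\calB)\le\epsilon$ exists, and fix $A\subseteq\Omega_X$. By the definition of $N(A)$, if $X\in A$ and $(X,Y)\notin\calB$ then $Y\in N(A)$; hence the event $\{X\in A\}$ is contained in $\{Y\in N(A)\}\cup\{XY\in\calB\}$, and taking probabilities gives $\mu_X(A)\le\mu_Y(N(A))+\Pr(XY\in\calB)\le\mu_Y(N(A))+\epsilon$, which is \eqref{eq:Hall}.

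For the ``if'' direction I would first reduce the construction of the coupling to a partial-transport problem. Call a pair $(x,y)$ \emph{admissible} if $(x,y)\notin\calB$, and let $\pi$ be a non-negative measure supported on the admissible pairs whose marginals satisfy $\pi_X\le\mu_X$ and $\pi_Y\le\mu_Y$ pointwise and whose total mass $m$ is as large as possible. Then $\mu_X-\pi_X$ and $\mu_Y-\pi_Y$ are non-negative measures, each of total mass $1-m$, so setting $\tilde\pi=\pi+\tfrac1{1-m}(\mu_X-\pi_X)\otimes(\mu_Y-\pi_Y)$ (and $\tilde\pi=\pi$ when $m=1$) produces a coupling with marginals exactly $\mu_X$ and $\mu_Y$; since the added term has total mass $1-m$ and $\pi$ is supported off $\calB$, we get $\tilde\pi(\calB)\le 1-m$. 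It therefore suffices to prove $m\ge 1-\epsilon$.

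Here $m$ is the optimal value of a transportation linear program, which by the max-flow/min-cut theorem equals the minimum cut capacity in the network with a source $s$, one node per $x\in\Omega_X$, one node per $y\in\Omega_Y$, a sink $t$, capacities $\mu_X(x)$ on the edges $s\to x$, capacities $\mu_Y(y)$ on the edges $y\to t$, and capacity $+\infty$ on $x\to y$ for each admissible pair. Because the middle edges cannot be cut, a finite cut is determined by the set $A\subseteq\Omega_X$ of severed edges $s\to x$ together with a set $B$ of severed edges $y\to t$ which, to block every surviving $s$--$t$ path, must contain $N(\Omega_X\setminus A)$; hence the minimum cut equals $\min_{A\subseteq\Omega_X}\big(\mu_X(A)+\mu_Y(N(\Omega_X\setminus A))\big)=\min_{A'\subseteq\Omega_X}\big(1-\mu_X(A')+\mu_Y(N(A'))\big)$, the last equality by substituting $A'=\Omega_X\setminus A$. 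Hypothesis \eqref{eq:Hall} says precisely that $1-\mu_X(A')+\mu_Y(N(A'))\ge 1-\epsilon$ for every $A'$, so $m\ge 1-\epsilon$, completing the argument.

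The only genuinely delicate point is the bookkeeping in the last step: one must verify that the minimum cut is controlled by the complementary sets $N(\Omega_X\setminus A)$ rather than by $N(A)$, and that the deficiency budget $\epsilon$ surfaces exactly through the term $1-\mu_X(A')$, so that the min-cut formula lines up verbatim with \eqref{eq:Hall}. An essentially equivalent but flow-free route is also available when $\mu_X,\mu_Y,\epsilon$ are rational: clear denominators, replace each atom $x$ by a cloud of size proportional to $\mu_X(x)$ and each $y$ by a cloud of size proportional to $\mu_Y(y)$, join two clouds by a complete bipartite graph whenever their pair is admissible, apply the defect form of Hall's theorem with deficiency the $\epsilon$-fraction of the ground set, and rescale the resulting near-perfect matching into the measure $\pi$; the general case then follows by approximation. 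I would present the flow argument as the primary one, since it handles arbitrary finite probability measures with no rationality assumption, and cite \cite{Koperberg2024} for the statement as originally formulated.
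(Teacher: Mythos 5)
Your proof is correct. Note first that the paper does not actually prove this statement: it is quoted from Koperberg \cite[Proposition 6]{Koperberg2024}, so there is no in-paper argument to compare against. Your two directions both check out. The ``only if'' union bound is exactly right (it implicitly uses the intended reading $N(A)=\{y: \exists x\in A,\ (x,y)\notin\calB\}$; as printed in the paper the definition of $N(A)$ omits the constraint $x\in A$, a typo you have silently and correctly repaired). For the ``if'' direction, the reduction to a maximal partial transport $\pi$ supported off $\calB$, the completion $\tilde\pi=\pi+\tfrac1{1-m}(\mu_X-\pi_X)\otimes(\mu_Y-\pi_Y)$ with $\tilde\pi(\calB)\le 1-m$, and the min-cut computation $\min_{A'}\bigl(1-\mu_X(A')+\mu_Y(N(A'))\bigr)\ge 1-\epsilon$ are all sound; max-flow/min-cut holds for finite networks with real capacities, so no rationality assumption is needed and the Hall-with-defect discretization you sketch is indeed dispensable. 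Your route (LP duality / flows) differs in flavor from Koperberg's combinatorial matching-based treatment but is a standard and fully rigorous alternative; either could be cited or included here, and your argument would serve as a self-contained proof if the authors preferred not to rely on the external reference.
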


The following corollary will be convenient to establish the existence of a required coupling once we find an almost suitable coupling.    
For our purposes, it is sufficient to consider discrete random variables, but it can be generalised to separable metric spaces since Theorem \ref{T:couplingNew} can be derived from \cite[Theorem 11.6.3]{Dudley2002}.

\begin{corollary}\label{lem:coupling}
    Suppose that $P$ is a distribution on $\Omega_X\times \Omega_Y$ such that $P(\B)\le \alpha$, $d_{TV}(\mu_X,P_X)\le \beta$, and $d_{TV}(\mu_Y,P_Y)\le \gamma$, where $P_X$ and $P_Y$ are the marginal distributions of $P$. Then, there exists a distribution $\pi$ on $\Omega_X\times \Omega_Y$ such that $\pi_X=\mu_X$, $\pi_Y=\mu_Y$ and $\pi(\B)\le \alpha+\beta+\gamma$.
\end{corollary}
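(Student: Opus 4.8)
The plan is to deduce Corollary~\ref{lem:coupling} from Theorem~\ref{T:couplingNew} applied with the ``bad'' set $\calB$ and deficiency parameter $\epsilon = \alpha+\beta+\gamma$. First I would unwind what Theorem~\ref{T:couplingNew} asks for: a coupling $\pi$ of the \emph{prescribed} marginals $\mu_X,\mu_Y$ with $\pi(\calB)\le \alpha+\beta+\gamma$ exists if and only if the Hall-type inequality $\mu_X(A)\le \mu_Y(N(A)) + (\alpha+\beta+\gamma)$ holds for every $A\subseteq\Omega_X$, where $N(A)=\{y: \exists x\in A,\ (x,y)\notin\calB\}$. So the entire task reduces to verifying this inequality, and the hypotheses give us exactly the ingredients to do so: $P$ itself is a coupling of its own marginals $P_X,P_Y$ with $P(\calB)\le\alpha$, and $P_X,P_Y$ are close in total variation to $\mu_X,\mu_Y$.

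The key computation is then a short chain of inequalities. Fix $A\subseteq\Omega_X$. Since $d_{TV}(\mu_X,P_X)\le\beta$ we have $\mu_X(A)\le P_X(A)+\beta$. Now I would apply the ``only if'' direction of Theorem~\ref{T:couplingNew} to the coupling $P$ (which has marginals $P_X,P_Y$ and satisfies $P(\calB)\le\alpha$): this forces $P_X(A)\le P_Y(N(A))+\alpha$. Finally $d_{TV}(P_Y,\mu_Y)\le\gamma$ gives $P_Y(N(A))\le \mu_Y(N(A))+\gamma$. Chaining these three bounds yields $\mu_X(A)\le \mu_Y(N(A)) + \alpha+\beta+\gamma$, which is precisely~\eqref{eq:Hall} with $\epsilon=\alpha+\beta+\gamma$. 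By the ``if'' direction of Theorem~\ref{T:couplingNew} there is a coupling $\pi$ with marginals $\mu_X,\mu_Y$ and $\pi(\calB)\le\alpha+\beta+\gamma$, as desired. (One should also note $\alpha+\beta+\gamma\le 1$ may fail; if it does, the statement is vacuous since any coupling satisfies $\pi(\calB)\le 1$, so we may assume $\alpha+\beta+\gamma\le 1$ and apply the theorem.)

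I do not expect any genuine obstacle here — this is a soft/formal argument. The only point requiring a little care is making sure the set $N(A)$ in Theorem~\ref{T:couplingNew} is the \emph{same} set in both applications of the theorem (it depends only on $\calB$ and $A$, not on the marginals or on which coupling we use), so that the inequality produced by applying the theorem to $P$ can be fed directly into the hypothesis check for the coupling of $\mu_X,\mu_Y$. Since $N(A)$ is defined purely in terms of $\calB$, this is immediate, and the proof goes through verbatim. If one prefers to avoid invoking the ``only if'' direction of Theorem~\ref{T:couplingNew} for $P$, one can instead bound $P_X(A)\le P_Y(N(A))+\alpha$ directly: indeed $P_X(A) = P(A\times\Omega_Y) \le P(A\times N(A)) + P((A\times\Omega_Y)\setminus(A\times N(A)))$, and every pair $(x,y)$ with $x\in A$ and $y\notin N(A)$ lies in $\calB$ (by definition of $N(A)$), so the second term is at most $P(\calB)\le\alpha$, while the first is at most $P_Y(N(A))$; this is the more self-contained route and I would write it this way.
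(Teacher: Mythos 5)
Your proposal is correct and follows essentially the same route as the paper: verify the Hall-type condition~\eqref{eq:Hall} with $\epsilon=\alpha+\beta+\gamma$ by chaining $\mu_X(A)\le P_X(A)+\beta\le P_Y(N(A))+\alpha+\beta\le \mu_Y(N(A))+\alpha+\beta+\gamma$, and then invoke Theorem~\ref{T:couplingNew} in the other direction. Your self-contained justification of the middle inequality (and the remark that $N(A)$ depends only on $\calB$ and $A$) is a harmless elaboration of the same argument.
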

\begin{proof}
 Consider any $A\subseteq \Omega_X$. Using Theorem \ref{T:couplingNew} and assumptions, we can bound
 \[
    \mu_X (A) \leq P_{X}(A)+ \beta \leq P_{Y}(N(A)) + \beta + \alpha \leq \mu_{Y}(N(A)) + \alpha +\beta + \gamma.
 \]
 Applying now Theorem \ref{T:couplingNew} in the opposite direction, we complete the proof.
\end{proof}

The following is another corollary of Theorem~\ref{T:couplingNew} whose proof can be found in~\cite{isaev2025sprinkling}.

\begin{corollary}[Corollary 2.2 of~\cite{isaev2025sprinkling}]
    \label{cor:coupling2} Let $D$ be a bipartite graph on $(S,T)$ and $|D|$ be the number of edges in $D$.
    Let $\delta,\eps\in [0,1]$ and 
    \begin{align*}
        S_{\text{good}} &= \left\{x\in S: \text{deg}_D(x)\ge (1-\eps)\frac{|D|}{|S|}\right\}\\
        T_{\text{good}} &= \left\{y\in T: \text{deg}_D(y)\ge (1-\eps)\frac{|D|}{|T|}\right\}.
    \end{align*}
    Assume that $|S_{\text{good}}|\ge (1-\delta)|S|$ and $|T_{\text{good}}|\ge (1-\delta)|T|$. Then, there is a coupling $(X,Y)$ such that $X$ and $Y$ are uniformly distributed on $S$ and $T$, respectively, and
    \[
    \pr(XY\notin D)\le 2\delta+\frac{\eps}{1-\eps}.
    \]
\end{corollary}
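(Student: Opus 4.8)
The plan is to apply Strassen's theorem with deficiency (Theorem~\ref{T:couplingNew}) with $\Omega_X=S$, $\Omega_Y=T$, with $\mu_X$ and $\mu_Y$ the uniform distributions on $S$ and $T$, and with bad set $\calB=\{(x,y)\in S\times T: xy\notin D\}$. With this choice $\pr(XY\in\calB)=\pr(XY\notin D)$, and for $A\subseteq S$ the set $N(A)$ appearing in Theorem~\ref{T:couplingNew} is precisely the $D$-neighbourhood of $A$, namely $N(A)=\{y\in T: xy\in D\text{ for some }x\in A\}$. So the whole task reduces to verifying the Hall-type condition $|A|/|S|\le |N(A)|/|T|+\epsilon$ for every $A\subseteq S$, with $\epsilon=2\delta+\eps/(1-\eps)$; Theorem~\ref{T:couplingNew} then hands us the desired coupling.

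Fix $A\subseteq S$ and put $A'=A\cap S_{\text{good}}$ and $B'=(T\setminus N(A))\cap T_{\text{good}}$. The key step is a double counting of the edges of $D$. Every edge of $D$ meeting $A'$ has its other endpoint in $N(A)$ (as $A'\subseteq A$), and every edge of $D$ meeting $B'$ has its other endpoint in $S\setminus A$ (a vertex of $T\setminus N(A)$ has no $D$-neighbour in $A$); consequently no edge meets both $A'$ and $B'$, so the edges incident to $A'$ and those incident to $B'$ are disjoint subsets of $D$. Hence
\[
\sum_{x\in A'}\deg_D(x)+\sum_{y\in B'}\deg_D(y)\le |D|.
\]
Applying the defining lower bounds $\deg_D(x)\ge(1-\eps)|D|/|S|$ on $A'$ and $\deg_D(y)\ge(1-\eps)|D|/|T|$ on $B'$ (and assuming, as we may, that $D$ is nonempty), this becomes $|A'|/|S|+|B'|/|T|\le 1/(1-\eps)$. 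Finally the hypotheses $|S_{\text{good}}|\ge(1-\delta)|S|$ and $|T_{\text{good}}|\ge(1-\delta)|T|$ give $|A'|\ge |A|-\delta|S|$ and $|B'|\ge |T|-|N(A)|-\delta|T|$; substituting these and rearranging yields exactly $|A|/|S|\le |N(A)|/|T|+2\delta+\eps/(1-\eps)$, as required.

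I do not anticipate a serious obstacle: the content is entirely in the edge-counting inequality above, and the only points demanding care are (i) matching the event $\{XY\notin D\}$ with the correct bad set $\calB$ and reading off $N(A)$ correctly from the statement of Theorem~\ref{T:couplingNew}, and (ii) checking that the edges incident to $A'$ and those incident to $B'$ really are disjoint, so that their counts add rather than overlap. Once $A'$ and $B'$ are chosen as above, the remaining manipulations are elementary.
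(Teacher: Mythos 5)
Your proposal is correct, and it is essentially self-contained, which is more than the paper provides: the paper does not prove Corollary~\ref{cor:coupling2} at all, but simply cites it from~\cite{isaev2025sprinkling}. Your derivation is the natural one and matches the spirit of how the paper uses Theorem~\ref{T:couplingNew} elsewhere (e.g.\ in Corollary~\ref{lem:coupling}): take $\calB=\{(x,y):xy\notin D\}$, note that the set $N(A)$ in Theorem~\ref{T:couplingNew} (where ``$x\in\Omega_x$'' is clearly a typo for ``$x\in A$'', as you read it) is the $D$-neighbourhood of $A$, and verify the Hall-type condition by double counting: edges meeting $A'=A\cap S_{\text{good}}$ have their $T$-endpoint in $N(A)$, edges meeting $B'=(T\setminus N(A))\cap T_{\text{good}}$ have their $T$-endpoint outside $N(A)$, so the two edge sets are disjoint, and the degree lower bounds plus $|A'|\ge|A|-\delta|S|$, $|B'|\ge|T|-|N(A)|-\delta|T|$ give exactly $|A|/|S|\le|N(A)|/|T|+2\delta+\eps/(1-\eps)$. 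Three trivial caveats, none of which is a real gap: (i) if $2\delta+\eps/(1-\eps)>1$ the conclusion is vacuous (any coupling works), and one should cap the deficiency at $1$ before invoking Theorem~\ref{T:couplingNew}, which requires $\epsilon\in[0,1]$; (ii) the case $\eps=1$ is likewise vacuous since the claimed bound is then infinite; (iii) the degenerate case $D=\emptyset$ (with $\delta=\eps=0$) is one where the statement itself fails, so excluding it, as you do, is the right reading of the corollary rather than a defect of your argument.
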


\section{Small $d$: proof of Theorem~\ref{thm:smalld}}

Recall that two sequences of probability measures $\mu_n$ and $\pi_n$ defined on the same sample space are called mutually contiguous if for any sequence of events $A_n$, $\lim_{n\to\infty}\mu_n(A_n)=0$ if and only if 
$\lim_{n\to\infty}\pi_n(A_n)=0$. We use $\G_n \vartriangleleft \vartriangleright  \calH_n$ to denote that $\G_n$ and $\calH_n$ are mutually contiguous.

\begin{lemma} Let $n$ be even.
    For every fixed $d\ge 3$, $\P_*(n,d) \vartriangleleft \vartriangleright  \M_{d}^\plus$.  
\end{lemma}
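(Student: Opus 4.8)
The plan is to establish mutual contiguity between $\P_*(n,d)$ and $\M_d^\plus$ for fixed $d\ge 3$ by exhibiting a common ``reference'' model and comparing each of the two to it, or more directly by computing the Radon--Nikodym derivative of each model against a suitable product measure on labelled pairings and verifying that the ratio of densities stays bounded away from $0$ and $\infty$ with probability $1-o(1)$. Both $\P_*(n,d)$ and $\M_d^\plus$ can be described through a sequence of $d$ perfect matchings on $[n]$ (in the matching-superposition model these are genuinely independent; in the loopless configuration model one can decompose the $nd$ points into $d$ ``colour classes'' and read off $d$ matchings), so the natural sample space to work over is the set of $d$-tuples of perfect matchings on $[n]$, or equivalently the multigraph they produce together with the multiplicity/colour data. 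The key point is that $\M_d^\plus$ is exactly the uniform measure on $d$-tuples of perfect matchings, while $\P_*(n,d)$ corresponds to reweighting this measure by an explicit factor accounting for (i) the conditioning on no loops, i.e.\ no monochromatic... wait, on no two points in the same bin being matched, and (ii) the passage between point-pairings and coloured matchings.

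The key steps, in order, would be: first, set up the bijective correspondence between $P\sim\P_*(n,d)$ (viewed via its $d$ point-classes) and a $d$-tuple of perfect matchings, carefully tracking the combinatorial factor by which a given coloured multigraph is over- or under-counted in each model; this factor depends only on the loop count (which is $0$ after the conditioning) and on the multiplicities of edges, and for fixed $d$ it is bounded between two positive constants. Second, use the classical result that for fixed $d$ the number of loops and multiple edges in $G[P]$ (for $P\sim\P(n,d)$), and likewise the overlap statistics of $d$ independent perfect matchings, converge jointly to independent Poisson random variables with bounded means (this is the same small-subgraph-conditioning computation underlying Propositions~\ref{p:configuration} and~\ref{p:matching}); in particular the conditioning event in $\P_*(n,d)$ has probability $\Omega(1)$, and with probability $1-o(1)$ the reweighting factor lies in a fixed compact subinterval of $(0,\infty)$. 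Third, conclude contiguity in both directions: for any event sequence $A_n$, $\pr_{\M_d^\plus}(A_n)$ and $\pr_{\P_*(n,d)}(A_n)$ differ by at most a bounded multiplicative constant plus an $o(1)$ additive error coming from the low-probability ``bad'' configurations, which forces $\pr(A_n)\to 0$ in one model iff it does in the other. Alternatively, and perhaps more cleanly, one can route through already-established facts: Proposition~\ref{p:configuration} (with $\P_*$ in place of $\P$, as noted in the text) gives $\G(n,d)\vartriangleleft G[P_*]$, Proposition~\ref{p:matching} gives $\G(n,d)\vartriangleleft\vartriangleright\M_d^\oplus$, and $\M_d^\oplus$ is $\M_d^\plus$ conditioned on a probability-$\Omega(1)$ event; assembling these with a short argument that all the relevant conditioning events have probability bounded away from $0$ yields the claim.

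The main obstacle I expect is bookkeeping rather than conceptual: one must correctly identify the density of $\P_*(n,d)$ with respect to the uniform measure on $d$-tuples of matchings (equivalently with respect to $\M_d^\plus$), making sure the colour/labelling symmetrization factors and the bin-internal symmetries are handled consistently, and then show this density is tight (bounded in probability away from $0$ and $\infty$). The Poisson-approximation input needed for this is standard for fixed $d$ --- it is precisely what makes Propositions~\ref{p:configuration} and~\ref{p:matching} work --- so the real work is packaging it so that the ratio of the two densities, not just each density against $\G(n,d)$, is controlled. Once that is in hand, contiguity in both directions is immediate from the definition.
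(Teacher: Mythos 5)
Your first route rests on a claim that is not true: that on a common sample space $\M_d^\plus$ is the uniform measure and $\P_*(n,d)$ is obtained from it by an explicit reweighting factor ``depending only on the loop count and the edge multiplicities'' and bounded between two positive constants. There is no such common space in the way you describe: a loopless pairing does not carry $d$ colour classes, since the $d$ points of a bin are matched to arbitrary points of other bins, so a sample of $\P_*(n,d)$ cannot be read off as a $d$-tuple of perfect matchings on $[n]$. The only space on which the two models can be compared is that of $d$-regular loopless multigraphs, and there $\pr_{\P_*(n,d)}(G)$ is proportional to $1/\prod_{u<v} m_{uv}!$ while $\pr_{\M_d^\plus}(G)$ is proportional to the number of ordered decompositions of $G$ into $d$ perfect matchings. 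The likelihood ratio is therefore governed by the (normalised) number of $1$-factorisations of the random multigraph, a global quantity: it vanishes on some multigraphs in the support of $\P_*(n,d)$, and its ratio to its mean converges to a nondegenerate, unbounded random variable, so it is certainly not confined to a fixed compact subinterval of $(0,\infty)$ with probability $1-o(1)$. Establishing the tightness that contiguity actually requires is exactly the second-moment/small-subgraph-conditioning analysis of the number of matching decompositions --- the hard content behind Proposition~\ref{p:matching} --- and the Poisson limits for loops, multiple edges and matching overlaps that you cite are only the easy ingredient of that machinery; they do not control the ratio by themselves. This is why the paper does not argue through densities at all: it observes that Wormald's Theorems 4.7, 4.8 and 4.15 go through verbatim for the loopless pairing model (conditioning only on no loops instead of no loops or double edges), which gives $\P_*(n,d)\vartriangleleft\vartriangleright\P_*(n,d-1)+\M_1^\plus$, and then concludes by induction on $d$.

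Your fallback route has a separate gap. Conditioning on an event of probability $\Omega(1)$ gives contiguity in one direction only: $\G(n,d)\vartriangleleft\P_*(n,d)$ and $\M_d^\oplus\vartriangleleft\M_d^\plus$, but not the converses, because ``simple'' has probability bounded away from $1$ under both unconditioned models. Concretely, the event that there is at least one double edge has probability $\Omega(1)$ under both $\P_*(n,d)$ and $\M_d^\plus$ but probability $0$ under $\G(n,d)$ and $\M_d^\oplus$, so no chain through Proposition~\ref{p:configuration} and Proposition~\ref{p:matching} can control events determined by the multi-edge structure --- and those are precisely the events the lemma must handle, since both models are non-simple with probability bounded away from zero. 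Repairing either route would require showing that the multi-edge structure, jointly with the simple part, has asymptotically the same law under the two models, which is again the superposition/small-subgraph-conditioning argument the paper invokes rather than a short bookkeeping step.
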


\proof Theorems 4.7 and 4.8 of~\cite{wormald1999models} hold for $\P_*(n,d)$ instead of $\G_{n,d}$ following exactly the same proof via small subgraph conditioning, by noting that $\G_{n,d}$ is the pairing model conditioned to no loops or double edges, whereas $\P_*(n,d)$ is the pairing model conditioned to no loops. Then,~\cite[Theorem 4.15]{wormald1999models} after $\G(n,d)$ is replaced replaced by $\P_*(n,d)$ and $\oplus$ (superpositions restricted to simple graphs) is replaced by $+$ (superpositions). In particular, $\P_*(n,d) \vartriangleleft \vartriangleright  \P_*(n,d-1)+\M_1^\plus$. This immediately yield our lemma by induction on $d$. \qed 

Given a probability measure $\G_n$, let $\Omega(\G_n)$ denote the sample space of $\G_n$. Thus, $\Omega(\P_*(n,d))$ denotes the set of pairings that can be produced by the pairing model $\P_*(n,d)$.

\begin{lemma}\label{lem:typical} Let $n$ be even.
    Let $d\ge 3$ be fixed, and let $f_n$ be an arbitrary functions of $n$ such that $f_n=\omega(1)$. Then, with $\Omega=\Omega(\P_*(n,d))$, for every $\eps>0$,
        $|\Omega_{f}|\ge (1-\eps) |\Omega|$,
        where
        \[
\Omega_{f}=\left\{P\in \Omega:        \frac{f_n^{-1}}{|\Omega|}\le \pr_{\M_{d}^\plus}(P) \le \frac{f_n}{|\Omega|}\right\}.
        \]
\end{lemma}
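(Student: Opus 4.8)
\textbf{Proof plan for Lemma~\ref{lem:typical}.}

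The plan is to compare the uniform distribution on $\Omega=\Omega(\P_*(n,d))$ — which is precisely $\P_*(n,d)$, since the loopless configuration model places equal mass on every loopless pairing — against the measure $\pr_{\M_d^\plus}$, which is the superposition of $d$ independent perfect matchings viewed as a pairing. Write $N=|\Omega|$. For a pairing $P\in\Omega$, let $r(P)=N\cdot\pr_{\M_d^\plus}(P)=\pr_{\M_d^\plus}(P)/\pr_{\P_*(n,d)}(P)$ be the likelihood ratio. We want to show that, for every fixed $\eps>0$, the set where $f_n^{-1}\le r(P)\le f_n$ has measure at least $(1-\eps)N$ once $n$ is large. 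Since $f_n=\omega(1)$, it suffices to show that $r(P)$ is bounded above and below by a sequence tending to $\infty$ (resp. to $0$) on all but an $\eps$-fraction of $\Omega$, and in fact it is enough to prove this with $\P_*(n,d)$-probability rather than counting measure, because these coincide on $\Omega$.

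The key observation is that the previous lemma gives $\P_*(n,d)\vartriangleleft\vartriangleright\M_d^\plus$, i.e.\ mutual contiguity of the two measures on the common sample space of pairings. (Strictly, $\M_d^\plus$ is supported on superpositions of $d$ matchings, all of which are loopless pairings, so its support is contained in $\Omega$; and contiguity in the stated direction only uses that $\P_*(n,d)$-null-ish events are $\M_d^\plus$-null-ish and vice versa.) The standard route is then to invoke the characterisation of contiguity in terms of the likelihood ratio: if $\mu_n\vartriangleleft\vartriangleright\nu_n$ on a common space then the likelihood ratio $L_n=d\mu_n/d\nu_n$ (here $r(P)$) is tight under $\nu_n$ and $1/L_n=d\nu_n/d\mu_n$ is tight under $\mu_n$. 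Concretely: if $r(P)$ were $>f_n$ on a set $A_n$ with $\pr_{\P_*(n,d)}(A_n)\ge\eps$ along a subsequence, then $\pr_{\M_d^\plus}(A_n)\ge f_n\eps\to\infty$, impossible; so $\pr_{\P_*(n,d)}(r>f_n)\le \eps/2$ eventually, using $\M_d^\plus\vartriangleleft\P_*(n,d)$. Symmetrically, if $r(P)<f_n^{-1}$ on a set $B_n$ of $\P_*$-mass $\ge \eps$, then $\pr_{\M_d^\plus}(B_n)\le f_n^{-1}\to 0$ while $\pr_{\P_*(n,d)}(B_n)\ge\eps\not\to 0$, contradicting $\P_*(n,d)\vartriangleleft\M_d^\plus$. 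Combining, $\pr_{\P_*(n,d)}(\Omega_f)\ge 1-\eps$ for large $n$, which since $\P_*(n,d)$ is uniform on $\Omega$ is exactly $|\Omega_f|\ge(1-\eps)|\Omega|$.

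The main technical point to be careful about is that the abstract definition of contiguity in the paper quantifies over sequences of events, whereas here $r(P)$ depends on $n$ and we are choosing the events $A_n=\{r>f_n\}$, $B_n=\{r<f_n^{-1}\}$ adaptively; the subsequence argument above handles this cleanly, so I expect no real obstacle, only bookkeeping. One should also double-check the support issue — that every pairing arising from a superposition of $d$ perfect matchings is loopless (true, since a matching never pairs two points of the same vertex) so that $\pr_{\M_d^\plus}$ is genuinely a sub-probability-or-probability measure on $\Omega$ and $r(P)$ is well defined and finite for $P\in\Omega$; and that conversely $\pr_{\M_d^\plus}(P)$ can in fact be $0$ for some $P\in\Omega$ (pairings not decomposable into $d$ matchings), which is harmless since then $r(P)=0<f_n^{-1}$ puts $P$ outside $\Omega_f$, and we have just shown the mass of such $P$ is $o(1)$, hence $\le\eps$. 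If one prefers to avoid quoting the likelihood-ratio form of contiguity, the same conclusion follows by a direct double-counting: mutual contiguity is established in the previous lemma via small subgraph conditioning, which in particular shows that the ratio $\pr_{\M_d^\plus}(P)/\pr_{\P_*(n,d)}(P)$ converges in distribution (under either measure) to a finite, a.s.\ positive random variable, and bounded-in-probability-from above and from $0$ is exactly the statement that $\Omega_f$ has relative size $\to 1$.
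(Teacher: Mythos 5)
Your proposal is correct and is essentially the paper's own argument: the paper likewise splits $\Omega\setminus\Omega_f$ into the set where $\pr_{\M_d^\plus}(P)>f_n/|\Omega|$, ruled out because it would force $\pr_{\M_d^\plus}$ of that set above $1$, and the set where $\pr_{\M_d^\plus}(P)<f_n^{-1}/|\Omega|$, ruled out by the contiguity $\P_*(n,d)\vartriangleleft\M_d^\plus$ exactly as in your ``Concretely'' step (your extra remarks on likelihood-ratio tightness, subsequences, and support are harmless bookkeeping; note the upper-tail case needs only $\pr\le 1$, not the direction $\M_d^\plus\vartriangleleft\P_*(n,d)$ you cite).
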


\proof Let 
\begin{align*}
    \Omega^-_{f} &=\left\{P\in \Omega:         \pr_{\M_{d}^\plus}(P) <\frac{f_n^{-1}}{|\Omega|}\right\}\\
    \Omega^+_{f} &=\left\{P\in \Omega:         \pr_{\M_{d}^\plus}(P) >\frac{f_n}{|\Omega|}\right\}
\end{align*}
It suffices to show that $|\Omega^-_f|, |\Omega^+_f|<(\eps/2)|\Omega|$.
Suppose on the contrary that $|\Omega^-_f|\ge (\eps/2)|\Omega|$. 
Then, $\pr_{\P_*(n,d)}(\Omega^-_f)\ge \eps/2$. However, 
\[
\pr_{\M_{d}^\plus}(\Omega^-_f) =\sum_{P\in \Omega^-_f} \pr_{\M_{d}^\plus}(P)< \sum_{P\in \Omega^-_f} \frac{f_n^{-1}}{|\Omega|} =f_n^{-1} \cdot \pr_{\P_*(n,d)}(\Omega_f) \le f_n^{-1}=o(1),
\]
contradicting with $\P_*(n,d)\vartriangleleft \vartriangleright  \M_{d}^\plus$. Hence, $||\Omega^-_f|<(\eps/2)|\Omega|$. On the other hand, if $|\Omega^+_f|\ge (\eps/2)|\Omega|$ then $\pr_{\P_*(n,d)}(\Omega^+_f)\ge \eps/2$ and thus
\[
\pr_{\M_{d}^\plus}(\Omega^+_f) =\sum_{P\in \Omega^+_f} \pr_{\M_{d}^\plus}(P)> \sum_{P\in \Omega^+_f} \frac{f_n}{|\Omega|} =f_n \cdot \pr_{\P_*(n,d)}(\Omega^+_f) \ge f_n\eps/2>1,
\]
contradicting with $\pr_{\M_{d}^\plus}(\Omega^+_f)\le 1$. Thus, $|\Omega^+_f|<(\eps/2)|\Omega|$ as desired. \qed

\remove{
\begin{theorem}\label{thm:Pairing-Matching}
   Let  $\tau=\omega(1)$. Then, for every fixed  $d\ge 1$,   there exists a coupling $(P,H)$ such that $P\sim \P_*(n,d)$, $M\sim \M^\plus_{\tau}$ and $\pr(P\subseteq M)=1-o(1)$. 
\end{theorem}
}

\medskip

{\em Proof of Theorem~\ref{thm:smalld}} Parts (a,b) follow by an almost identical proof. Thus, we first present the proof for part (a).
Set 
\[
i^* = \lfloor \tau/d \rfloor.
\]
Let $f_n$ be any function of $n$ satisfying
\[
f_n=\omega(1),\quad \left(1-\frac{1}{2}f_n^{-2}\right)^{i^*}<f_n^{-1}.
\]
Since $\tau=\omega(1)$ such a function exists.
Let $\Omega_{f}$ be defined as in Lemma~\ref{lem:typical}  with $f_n$ chosen above.

Let $H_1,\ldots, H_{i^*}$ be independent copies of $\M^{\plus}_{d}$ and let $H=\cup_{i=1}^{i^*} H_i$. Then, $\hat H\sim \M_{di^*}^{\plus} \subseteq M^+_{\tau}$.

For every $P\in \Omega_f$, let $w(P)=|\Omega|\cdot\pr_{\M_d^\plus}(P)$. By definition of $\Omega_f$,
\begin{equation}
f_n^{-1}\le w(P) \le f_n, \quad \text{for all $P\in \Omega_f$}.\label{eq:weight}
\end{equation}

Construct $\hat P\in \Omega$ by the following procedure, which starts from step $i=1$, and either sets $\hat P$ to be $H_i$, ore rejects $H_i$ and proceed to the next step $i+1$, until reaching the final step $i^*$.

\begin{enumerate}[(i)]
    \item If $H_i\notin \Omega_{f}$, reject $H_i$. Otherwise, accept $H_i$ with probability $f_n/w(H_i)$, and reject it with the remaining probability.
    \item If $H_i$ is accepted, let $\hat P=H_i$ and terminate the procedure. 
    If $H_i$ is rejected, and $i<i^*$, set $i$ to be $i+1$ and go back to step (i). If $H_i$ is rejected, and $i=i^*$, set $\hat P=\emptyset$ and terminate.
\end{enumerate}

If $\hat P\neq \emptyset$ then obviously $\hat P\subseteq \hat H$. Next, we claim that with high probability, $\hat P\neq \emptyset$ and moreover, if $\hat P\neq \emptyset$ and $\hat P$ is uniform in $\Omega_f$.
\begin{claim}\label{claim:marginal}
    \begin{enumerate}[(a)]
        \item $\pr(\hat P=\emptyset)=o(1)$, and $\pr(\hat P=P \mid \hat P\neq \emptyset)=|\Omega_f|^{-1}$ for all $P\in \Omega_f$;
        \item $d_{TV}(\hat P, \P_*(n,d))=o(1)$.
    \end{enumerate}
\end{claim}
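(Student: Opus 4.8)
The plan is to prove Claim~\ref{claim:marginal} in two parts, with part (a) doing the real work and part (b) following quickly from it.

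\medskip

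\textbf{Part (a).} First I would analyze a single step of the procedure. Conditioned on reaching step $i$, the graph $H_i$ is distributed as $\M_d^\plus$, and $H_i$ is accepted exactly when $H_i\in\Omega_f$ and an independent coin of probability $f_n/w(H_i)$ comes up heads. Since for $P\in\Omega_f$ we have $\pr_{\M_d^\plus}(P)=w(P)/|\Omega|$, the probability that a given step accepts and outputs a particular $P\in\Omega_f$ is
\[
\pr_{\M_d^\plus}(P)\cdot\frac{f_n}{w(P)}=\frac{w(P)}{|\Omega|}\cdot\frac{f_n}{w(P)}=\frac{f_n}{|\Omega|},
\]
which is \emph{independent of $P$}. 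This is the key point: each step either fails, or outputs a uniformly random element of $\Omega_f$. Summing over $P\in\Omega_f$, the per-step acceptance probability is $|\Omega_f|f_n/|\Omega|$; note that by \eqref{eq:weight} and $\sum_{P\in\Omega_f}w(P)/|\Omega|=\pr_{\M_d^\plus}(\Omega_f)\le 1$ we get $|\Omega_f|/|\Omega|\le f_n$, so this per-step probability is at most $f_n^2\cdot|\Omega_f|/|\Omega|^2$... more usefully, the per-step \emph{rejection} probability is $1-|\Omega_f|f_n/|\Omega|$. To bound this below $1$ in a usable way I would instead compute the rejection probability directly: conditioned on reaching step $i$, $\pr(\text{reject at step }i)=1-\sum_{P\in\Omega_f}\pr_{\M_d^\plus}(P)\frac{f_n}{w(P)}$. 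Hmm — this needs $f_n/w(P)\le 1$, which holds since $w(P)\ge f_n^{-1}$ would give $f_n/w(P)\le f_n^2$, not $\le 1$. So I need to be more careful: the acceptance probability at a step should be taken as $f_n^{-1}/w(H_i)$ or $\min\{1,f_n/w(H_i)\}$, but reading the construction literally with $f_n/w(H_i)$ and $w(H_i)\ge f_n^{-1}$ gives $f_n/w(H_i)\le f_n^2$ which can exceed $1$. I suspect the intended acceptance probability is $\frac{f_n^{-1}}{w(H_i)}\in[f_n^{-2},1]$ by \eqref{eq:weight}; I would proceed with that reading (or note the clamp). Then the per-step output-$P$ probability is $\pr_{\M_d^\plus}(P)\cdot f_n^{-1}/w(P)=f_n^{-1}/|\Omega|$, still independent of $P$, and the per-step rejection probability is $1-|\Omega_f|f_n^{-1}/|\Omega|\le 1-(1-\eps)f_n^{-1}$ using Lemma~\ref{lem:typical}. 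Hence
\[
\pr(\hat P=\emptyset)=\prod_{i=1}^{i^*}\pr(\text{reject at step }i)\le\bigl(1-(1-\eps)f_n^{-1}\bigr)^{i^*}\le e^{-(1-\eps)f_n^{-1}i^*},
\]
and since $i^*=\lfloor\tau/d\rfloor$ with $\tau=\omega(1)$, one can choose $f_n=\omega(1)$ slowly enough (e.g.\ $f_n=\min\{\log\log\tau,\,\text{something}\}$, matching the displayed constraint $(1-\tfrac12 f_n^{-2})^{i^*}<f_n^{-1}$) so that $f_n^{-1}i^*\to\infty$, giving $\pr(\hat P=\emptyset)=o(1)$. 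For the conditional uniformity: $\pr(\hat P=P)=\sum_{i=1}^{i^*}\pr(\text{reach step }i)\cdot\frac{f_n^{-1}}{|\Omega|}$, which is the same quantity for every $P\in\Omega_f$, so conditioning on $\hat P\neq\emptyset$ gives the uniform distribution on $\Omega_f$. This proves (a).

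\medskip

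\textbf{Part (b).} Now $\hat P$ conditioned on $\hat P\ne\emptyset$ is uniform on $\Omega_f$, whereas $\P_*(n,d)$ is uniform on $\Omega\supseteq\Omega_f$. So I would bound
\[
d_{TV}(\hat P,\P_*(n,d))\le\pr(\hat P=\emptyset)+d_{TV}\bigl(\mathrm{Unif}(\Omega_f),\mathrm{Unif}(\Omega)\bigr)
\le o(1)+\frac{|\Omega|-|\Omega_f|}{|\Omega|}\le o(1)+\eps
\]
by Lemma~\ref{lem:typical}. Since $\eps>0$ is arbitrary, a standard diagonal argument (choosing $\eps=\eps_n\to0$ slowly, which forces a slowly growing $f_n$, compatible with $i^*=\omega(1)$) upgrades this to $d_{TV}(\hat P,\P_*(n,d))=o(1)$.

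\medskip

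\textbf{Main obstacle.} The delicate point is the interplay of the three quantities $\eps$, $f_n$, and $i^*$: Lemma~\ref{lem:typical} gives $|\Omega_f|\ge(1-\eps)|\Omega|$ only for \emph{fixed} $\eps$, so to get a genuine $o(1)$ bound rather than an $\eps+o(1)$ bound one must run the argument for each fixed $\eps$ and then diagonalize, checking at each stage that $f_n$ can still be chosen slowly growing while $f_n^{-1}i^*\to\infty$ and the constraint $(1-\tfrac12 f_n^{-2})^{i^*}<f_n^{-1}$ (equivalently $i^*\gg f_n^2\log f_n$) is met — this is where $\tau=\omega(1)$ is used, and it is exactly the bound that drives the choice of $f_n$ in the proof. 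The second subtlety, worth a remark, is the acceptance-probability normalization ($f_n/w$ versus $f_n^{-1}/w$); with the clamped version $\min\{1,\cdot\}$ the per-step output probability is still constant in $P$ on the set where no clamping occurs, and clamping only helps the $\pr(\hat P=\emptyset)$ bound, so the argument goes through either way. Parts (a,b) for Theorem~\ref{thm:smalld}(b) then follow by the identical argument with $\M_d^\cup$ in place of $\M_d^\plus$, using that $\G(n,d)\vartriangleleft\vartriangleright\M_d^\oplus$ (Proposition~\ref{p:matching}) in place of $\P_*(n,d)\vartriangleleft\vartriangleright\M_d^\plus$, and noting $\M_{di^*}^\cup\subseteq\M_\tau^\cup$.
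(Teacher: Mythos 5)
Your proof is correct and follows essentially the same route as the paper: the per-step output probability is constant on $\Omega_f$ (hence conditional uniformity of $\hat P$), the per-step rejection probability compounds over the $i^*=\lfloor\tau/d\rfloor$ independent attempts to give $\pr(\hat P=\emptyset)=o(1)$, and part (b) follows from part (a) and Lemma~\ref{lem:typical} by sending $\eps\to 0$ (in fact, since Lemma~\ref{lem:typical} holds for every fixed $\eps$ with the same $f_n$, you get $|\Omega_f|=(1-o(1))|\Omega|$ directly and no re-choice of $f_n$ or diagonalization is really needed). Your side remark is also right: the acceptance probability as written, $f_n/w(H_i)$, is at least $1$ on $\Omega_f$ and would destroy uniformity; the paper's own proof of the claim (the lower bound $f_n^{-2}$ for acceptance given $H_i\in\Omega_f$, and the constraint $(1-\tfrac12 f_n^{-2})^{i^*}<f_n^{-1}$) confirms the intended value is $f_n^{-1}/w(H_i)$, exactly the reading you adopted.
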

Then the assertion of the theorem follows by Lemma~\ref{lem:coupling} and Claim~\ref{claim:marginal}(b). It only remains to prove Claim~\ref{claim:marginal}.

{\em Proof of Claim~\ref{claim:marginal}.\ } Note that $\Omega(\hat P)=\Omega(\P_*(n,d))\cup \{\emptyset\}$. We may equivalently extend $\Omega(\P_*(n,d))$ to include the element $\emptyset$, so that $\hat P$ and $\P_*(n,d)$ are defined on the same sample space, where $\P_*(n,d)$ has probability zero on $\emptyset$, and is uniform on all the other elements. Then, part (b) follows by part (a) and Lemma~\ref{lem:typical} by sending $\eps$ to zero.

For part (a), $\hat P=\emptyset$ occurs when rejections occur for all the $i^*$ steps. In each step $i$, $H_i$ is accepted with probability at least $(1-\eps)f_n^{-2}$, where $1-\eps$ accounts for a lower bound for the probability that $H_i\in\Omega_f$, and by~\eqref{eq:weight} ,$f_n^{-2}$ is a lower bound for the probability that $H_i$ is accepted given that $H_i\in\Omega_f$. It follows then that
\[
\pr(\hat P=\emptyset) \le \left(1-(1-\eps)f_n^{-2}\right)^{i^*} \le \left(1-\frac{1}{2}f_n^{-2}\right)^{i^*}\le f_n^{-1}=o(1),
\]
by the choice of $i^*$.

For the second assertion of part (a), let $P\in \Omega_{f}$. Then,

\begin{align*}
    \pr(\hat P=P) =& \sum_{i=1}^{i^*} \left(\sum_{h_1,\ldots,h_{i-1}\in \Omega} \prod_{j=1}^{i-1} \pr(H_i=h_i)\left(\ind{h_i\notin\Omega_f}+\left(1-\frac{f_n}{w(h_i)}\right)\ind{h_i\in\Omega_f}\right) \right)\pr(H_i=P) \frac{f_n}{w(P)}\\
    =& \sum_{i=1}^{i^*} \eta^{i-1} \frac{f_n}{|\Omega|},
\end{align*}
where

\begin{align*}
\eta &= \sum_{h\in\Omega}  \pr(\M_{d}^\plus=h)\left(\ind{h\notin\Omega_f}+\left(1-\frac{f_n}{w(h)}\right)\ind{h\in\Omega_f}\right).
\end{align*}
Hence, $\pr(\hat P=P')$ is independent of $P'$. Thus,
\[
\pr(\hat P=P\mid \hat P\neq\emptyset)=\frac{1}{|\Omega_{f}|}.\qed
\]

\section{Proof of Theorem~\ref{thm:P-P}}

\subsection{Concentration of the number of perfect matchings}

\begin{lemma}
    Let $n$ be even. Let $Y$ be the number of perfect matchings in $\P_*(n,d)$. Then,
    \begin{align}
        \ex Y^2 &= \left(1+\frac{1}{4d^2}+\frac{2}{3d^3}+O(d^{-4}+d^3/n+\sqrt{d/n}\log^3 n)\right)(\ex Y)^2.\label{eq:conc2}
    \end{align}
\end{lemma}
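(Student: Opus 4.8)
The statement we must establish concerns the second moment of $Y$, the number of perfect matchings in a random pairing from $\P_*(n,d)$. The natural approach is to compute $\ex Y$ and $\ex Y^2$ directly via the first- and second-moment methods, exploiting the fact that $\P_*(n,d)$ is just the pairing model conditioned on having no loops. First I would write $Y = \sum_{M} \ind{M \subseteq P}$, where the sum is over all $\tfrac{n!}{(n/2)!\, 2^{n/2}}$ perfect matchings $M$ on $[n]$, and by symmetry $\ex Y = \tfrac{n!}{(n/2)!\, 2^{n/2}} \cdot \pr(M_0 \subseteq P)$ for a fixed matching $M_0$. Here $M_0 \subseteq P$ means that for each edge $uv \in M_0$, some point of bin $u$ is paired to some point of bin $v$ in $P$; the probability of this event under $\P_*(n,d)$ can be computed either by a short direct enumeration of pairings, or — more conveniently for matching the error terms in the statement — by appealing to Theorem~\ref{thm:subgraphenum} applied to the degree sequence $\bfg = (d-1, \ldots, d-1)$ (after removing the $n/2$ forced edges, each vertex loses one unit of degree) with $X$ the forbidden edge set. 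The loopless conditioning contributes a factor governed by the number of loopless pairings, which is itself a ratio of the form given by Theorem~\ref{thm:subgraphenum} or by McKay-type asymptotics; I would absorb it carefully since the $1/(4d^2)$ and $2/(3d^3)$ corrections are exactly of the size that these $\lambda(\bfg)$ and $\lambda(\bfg)^2$ terms produce.

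For the second moment, write $\ex Y^2 = \sum_{M_1, M_2} \pr(M_1 \cup M_2 \subseteq P)$ and classify ordered pairs $(M_1, M_2)$ of perfect matchings by the structure of their symmetric difference: $M_1 \cup M_2$ decomposes $[n]$ into a collection of shared edges (where $M_1$ and $M_2$ agree) and alternating cycles of even length $\ge 4$. The dominant contribution comes from pairs whose union is ``generic'', and I would parametrize by the number $k$ of shared edges and the cycle-type of the rest, computing for each type the number of such pairs and the probability (again via Theorem~\ref{thm:subgraphenum}, now on the degree sequence obtained after removing $|M_1 \cup M_2|$ forced edges) that the union embeds in $P$. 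The strategy is to show that $\ex Y^2 / (\ex Y)^2 = \sum_{\text{types}} (\text{contribution})$, where the leading term is $1$, the next corrections are the explicit constants $\tfrac{1}{4d^2} + \tfrac{2}{3d^3}$ coming from the smallest alternating cycles (length $4$ and $6$) and from second-order effects of the $\lambda$-terms, and everything else is swept into the error $O(d^{-4} + d^3/n + \sqrt{d/n}\log^3 n)$. The $d^3/n$ term reflects the finite-$n$ corrections in the enumeration formula (the $O(\hat\Delta(\bfg)^2/M(\bfg))$ term with $\Delta \sim d$, $M \sim dn$, giving $d^4/(dn) = d^3/n$), while the $\sqrt{d/n}\log^3 n$ term presumably arises from truncating the sum over cycle lengths or from a tail bound on long cycles in the union.

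The main obstacle I anticipate is bookkeeping the cancellation between $\ex Y^2$ and $(\ex Y)^2$ with enough precision to extract the $2/(3d^3)$ coefficient: both the numerator and $(\ex Y)^2$ carry exponential-in-$n$ factorial factors that must cancel exactly, and then the surviving sub-exponential factors ($\exp(-\lambda - \lambda^2 - \mu + O(\cdots))$ from Theorem~\ref{thm:subgraphenum}) must be expanded to third order in $1/d$. In particular one has to be careful that the $\mu(\bfg, X)$ term — which depends on the forbidden edge set and hence on the structure of $M_1 \cup M_2$ — contributes at the right order, and that the error term $O(\hat\Delta^2/M)$ is genuinely of order $d^3/n$ uniformly over all the cycle-types being summed. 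A secondary difficulty is controlling the sum over pairs whose union contains many short cycles (the combinatorial count grows but the embedding probability decays), which is where I expect a geometric-series argument to cap the total contribution at $O(d^{-4})$ beyond the explicitly retained terms. I would organize the proof so that the pair-type classification, the per-type asymptotic evaluation, and the final summation are three clearly separated steps, and check the leading constants against the known analogous computation for $\G(n,d)$ itself (the $\tfrac{d-1}{d}$-type corrections in Bollobás / Wormald) as a sanity check.
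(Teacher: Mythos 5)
There is a genuine gap, and it sits exactly at the point this lemma is about: the difference between the loopless pairing model $\P_*(n,d)$ and a simple-graph model. You propose to compute $\pr(M_0\subseteq P)$ and $\pr(M_1\cup M_2\subseteq P)$ by applying Theorem~\ref{thm:subgraphenum} to the residual degree sequence $(d-1,\ldots,d-1)$ with the forced matching as the forbidden set $X$. But Theorem~\ref{thm:subgraphenum} enumerates \emph{simple} graphs, whereas the residual structure of a pairing in $\P_*(n,d)$ after accounting for a matching is a loopless multigraph: it can contain pairs parallel to $M_0$ and multiple edges among themselves, and two distinct perfect matchings of the pairing can even use different pairs with the same end-vertices. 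If you carry out your plan you are effectively computing the second moment for $\G(n,d)$, which is $\bigl(1+\tfrac{1}{6d^3}+O(\cdots)\bigr)(\ex Y)^2$ (this is the known result the paper quotes from the literature), and you will never see the $\tfrac{1}{4d^2}$ term. Relatedly, your heuristic that the corrections $\tfrac{1}{4d^2}+\tfrac{2}{3d^3}$ come from alternating $4$- and $6$-cycles is off: those contributions are already present in the simple-graph computation and only produce $\tfrac{1}{6d^3}$; the extra $\tfrac{1}{4d^2}$ is a multigraph effect (essentially the fluctuation of the number of double edges in the pairing), which is why the paper's count of pairs $(H_1,H_2)$ with $k$ common edges uses a modified generating function, $-\tfrac12\log(1-z^2)$ in place of $-\tfrac12\log(1-z^2)-z^2/2$, precisely to allow $e\in H_1$, $e'\in H_2$ distinct but with the same endpoints.

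The paper's route avoids this by representing $\P_*(n,d)$ exactly as a uniform bipartite graph $\B(\bfd,\bfs)$ with vertex side of degree $d$ and an ``edge-slot'' side of $dn/2$ vertices of degree $2$, and then applying McKay's \emph{bipartite} enumeration (Theorem~\ref{thm:bipartiteenum}) to the subgraph probabilities; multi-edges of the multigraph are then handled automatically. It also does not redo the Laplace-type summation over the overlap parameter $k$ from scratch: it tracks the exact multiplicative discrepancies (the $\exp(1/4)$, $\sqrt{e}$ and modified exponent $\phi$ factors) against the corresponding formulas for $\G(n,d)$ in the cited earlier work, which is where the $\sqrt{d/n}\log^3 n$ error actually comes from (concentration of the sum over $k$ around $\bar k\approx n/2d$, not a tail bound over long alternating cycles). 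If you want to salvage your plan, you must either work in the pairing/bipartite model directly (e.g.\ via Theorem~\ref{thm:bipartiteenum} as the paper does) or explicitly classify and sum over the multi-edge configurations that the simple-graph enumeration forbids; your classification by shared edges alone (not cycle type) then suffices, since the embedding probability depends only on the size of $H_1\cup H_2$.
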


\proof Let $U=[n]$ and $V=\{e_1,\ldots,e_{dn/2}\}$ and let $\bfd=(\underbrace{d,\ldots,d}_{n})$ and
$\bfs=(\underbrace{2,\ldots,2}_{dn/2})$. Then,
$\P_*(n,d)$ is equivalent to $\G({\bfd}, {\bfs})$  by viewing the elements in $U$ as the vertices in $\P_*(n,d)$ and the elements in $V$ as pairs in $\P_*(n,d)$. That is, if $e_j$ in $V$ is adjacent to two vertices $u,v\in U$ then $e_j$ corresponds to a pair in $\P_*(n,d)$ that joins $u$ and $v$.
Hence, a perfect matching in $\P_*(n,d)$ is equivalent to a subgraph $H$ of $\G({\bfd}, {\bfs})$ such that every vertex in $A=[n]$ has degree one, and exactly $n/2$ vertices in $B=[dn/2]$ has degree two, whereas all the other vertices in $B$ have degree zero. 

The number of ways to choose such a subgraph $H$ is
\[
\frac{n!}{(n/2)! 2^{n/2}} (dn/2)_{n/2}.
\]
For each such given $H$, the probability that it is a subgraph of $\G({\bfd}, {\bfs})$ is $N(\bfs',\bft')/N(\bfs,\bft)$, where
\[
\bfd'=(\underbrace{d-1,\ldots,d-1}_{n}),\quad \bfs'=(\underbrace{2,\ldots,2}_{dn/2-n/2}).
\]
By Theorem~\ref{thm:bipartiteenum}, this probability is
\[
\frac{((d-1)n)! d!^n 2^{dn/2} }{(d-1)!^n2^{(d-1)n/2} (dn)!} \exp\left(-\frac{(d-1)(d-2)n\cdot 2(d-1)n/2}{2(d-1)^2n^2} + \frac{d(d-1)n \cdot 2 dn/2}{2d^2n^2}+O(d^3/n)\right).
\]
Multiplying the above by $\frac{n!}{(n/2)! 2^{n/2}} (dn/2)_{n/2}$ and simplifying by~\eqref{eq:Stirling}, we obtain that
\begin{align*}
    \ex Y=\sqrt{2}(d-1)^{(d-1)n/2} d^{n-dn/2} \exp(1/2+O(d^3/n)).
\end{align*}
Note that this agrees with~\cite[eq.~(2)]{gao2023number} except that the exponent was $1/4$ in~\cite[eq.~(2)]{gao2023number} instead of $1/2$ above. It is then convenient to define $\hat Y$ to be the number of perfect matchings in $\G(n,d)$, and it follows that 
\begin{equation}
\ex Y=\ex \hat Y \cdot \exp(1/4+O(d^3/n)). \label{eq:compare} 
\end{equation}

Next, consider $\ex Y^2$. Fix two perfect matchings $H_1$ and $H_2$ such that $|H_1\cap H_2|=k$. 
\begin{claim}\label{claim:pairs}
    Let $0\le k\le n/2-2$ be an integer. The number of pairs perfect matchings $(H_1,H_2)$ such that $|H_1\cap H_2|=k$ is
    \[
(1+O(n-2k)^{-1})\frac{n!}{2^k k!\sqrt{\pi(n-2k)/2}}.    
    \]
\end{claim}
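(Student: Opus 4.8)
The plan is to count pairs of perfect matchings $(H_1,H_2)$ on $[n]$ with $|H_1\cap H_2|=k$ directly, as a purely combinatorial identity (this quantity does not depend on $d$ at all), and then estimate the resulting formula asymptotically. First I would choose the $k$ common edges: this is the number of ways to pick a matching of size exactly $k$ in $K_n$, which leaves $n-2k$ vertices uncovered. Then $H_1$ and $H_2$ restrict to two perfect matchings $M_1,M_2$ on these $n-2k$ vertices that are \emph{disjoint} (no shared edge), so the count factors as
\[
\#\{(H_1,H_2):|H_1\cap H_2|=k\}=\binom{n}{2k}\,(2k-1)!!\;\cdot D(n-2k),
\]
where $(2k-1)!!$ counts the $k$-edge matching on the chosen $2k$ vertices and $D(m)$ denotes the number of ordered pairs of perfect matchings on $m$ vertices sharing no edge (equivalently, $m/2$ times the number of labelled $2$-regular multigraphs with no doubled edge, weighted appropriately).

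\textbf{Counting disjoint matching pairs.} The main work is to pin down $D(m)$ for $m=n-2k$ even. The superposition of two disjoint perfect matchings on $m$ vertices is a disjoint union of even cycles of length $\ge 4$ (length-$2$ cycles are exactly shared edges, which are forbidden), with each such $2$-regular graph arising from exactly $2^{(\#\text{cycles})}$ ordered pairs (each cycle can be $2$-coloured into the two matchings in two ways). Hence $D(m)=\sum_{\text{perms}} \dots$; the clean way is the exponential generating function or the standard fact that the number of ways to partition $[m]$ into an unordered collection of cycles each of even length $\ge 4$, then colour, equals $m!\,[z^m]\exp\!\big(\tfrac12\sum_{j\ge 2}\tfrac{z^{2j}}{2j}\cdot 2\big)$ — wait, more carefully: a labelled cycle on a given set of $2j$ vertices can be formed in $(2j-1)!/2$ ways, contributing $2$ colourings, so the EGF for one ``component'' is $\sum_{j\ge2}\frac{(2j-1)!}{2}\cdot 2\cdot\frac{z^{2j}}{(2j)!}=\sum_{j\ge2}\frac{z^{2j}}{2j}=-\tfrac12\log(1-z^2)-\tfrac{z^2}{2}$. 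Therefore
\[
D(m)=m!\,[z^m]\,\frac{e^{-z^2/2}}{\sqrt{1-z^2}}.
\]
From here a singularity analysis (or Stirling applied to an exact double sum) gives $D(m)=(1+O(1/m))\,\frac{m!}{\,e^{1/2}\sqrt{2}\,}\cdot\sqrt{\tfrac{2}{m}}\cdot$ (appropriate constant); concretely $[z^m]\frac{e^{-z^2/2}}{\sqrt{1-z^2}}\sim \frac{e^{-1/2}}{\sqrt{\pi m}}$ as $m\to\infty$ through even integers, so $D(m)=(1+O(1/m))\,\frac{m!\,e^{-1/2}}{\sqrt{\pi m}}$.

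\textbf{Assembling and simplifying.} Plugging $m=n-2k$ into the factorization and writing $\binom{n}{2k}(2k-1)!!=\frac{n!}{2^k k!\,(n-2k)!}$, the $(n-2k)!$ cancels against the $m!$ in $D(m)$, leaving
\[
\#\{(H_1,H_2):|H_1\cap H_2|=k\}=\big(1+O((n-2k)^{-1})\big)\,\frac{n!}{2^k k!}\cdot\frac{1}{\sqrt{\pi(n-2k)/2}}\cdot e^{-1/2}\sqrt{2}\Big/\sqrt2,
\]
and matching constants shows the $e^{-1/2}$ and stray $\sqrt2$'s must combine to $1$ — so I would double-check the EGF normalization so that the final constant is exactly as stated, $\frac{n!}{2^k k!\sqrt{\pi(n-2k)/2}}$. (Since the claim restricts to $k\le n/2-2$, we have $n-2k\ge 4$, so the cycle structure is genuinely non-empty and the error term $O((n-2k)^{-1})$ is meaningful; the case $n-2k$ large is where the asymptotic bites, and $n-2k$ bounded is absorbed into the $O$-term.)

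\textbf{Main obstacle.} The only real subtlety is the asymptotic evaluation of $D(m)$ with the claimed relative error $O(1/m)$ uniformly in $m$ — i.e.\ controlling $[z^m]\frac{e^{-z^2/2}}{\sqrt{1-z^2}}$ with an explicit error term valid for all even $m\ge 4$, not just asymptotically. I would handle this either by transfer theorems (the function $e^{-z^2/2}/\sqrt{1-z^2}$ has a single dominant square-root singularity at $z=1$, analytic elsewhere on the closed disk except $z=\pm1$, and the contribution from $z=-1$ is of the same order so one must track both and they add because $m$ is even), or, to get a self-contained proof matching the bipartite-enumeration style of the paper, by writing $D(m)$ as an explicit sum over the number of cycles and applying Stirling termwise with a Laplace-type concentration argument. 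Everything else is bookkeeping: the product of binomials, the cancellation of factorials, and collecting the $1+O(1/(n-2k))$ factors.
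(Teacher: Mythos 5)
Your overall strategy (choose the $k$ common edges, decompose what remains into even alternating cycles, translate into an exponential generating function, then extract coefficients) is exactly the paper's route, but your combinatorial model is the wrong one for this claim, and the discrepancy you noticed at the end is a real error, not a normalization issue. You forbid $2$-cycles, i.e.\ you require the parts of $H_1$ and $H_2$ away from the $k$ common edges to be edge-disjoint, which is the correct model for perfect matchings of a simple graph and reproduces \cite[Lemma 17]{gao2023number}. Here, however, the matchings live in the pairing model $\P_*(n,d)$, which forbids loops but not multiple edges: outside the $k$ identified common pairs, $H_1$ and $H_2$ may both match $u$ with $v$ via two \emph{distinct} pairs (a double edge), and such configurations must be counted. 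So the cycle decomposition must allow cycles of length $2$, the per-component generating function is $\sum_{j\ge 1}\frac{z^{2j}}{2j}=-\tfrac12\log(1-z^2)$ rather than $-\tfrac12\log(1-z^2)-\tfrac{z^2}{2}$, and the structure EGF is $(1-z^2)^{-1/2}$ rather than $e^{-z^2/2}(1-z^2)^{-1/2}$. With your EGF the count comes out as $e^{-1/2}\,\frac{n!}{2^k k!\sqrt{\pi(n-2k)/2}}\,(1+O((n-2k)^{-1}))$; the stray $e^{-1/2}$ does not cancel against anything, and indeed the paper points out that the number in the claim is asymptotically $\sqrt{e}$ times the Lemma 17 count — precisely the factor your model loses.

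Once the $2$-cycles are included, the proof actually becomes easier than the "main obstacle" you describe: the count is
\[
\frac{n!}{2^k k!\,(n-2k)!}\cdot (n-2k)!\,[z^{\,n-2k}]\,(1-z^2)^{-1/2},
\qquad
[z^{m}]\,(1-z^2)^{-1/2}=2^{-m}\binom{m}{m/2}\quad(m \text{ even}),
\]
an exact central binomial coefficient, so no transfer theorem or tracking of the singularity at $z=-1$ is needed; Stirling gives $2^{-m}\binom{m}{m/2}=(1+O(1/m))\sqrt{2/(\pi m)}$ uniformly, which yields the stated formula with the claimed error term.
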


{\em Proof of Claim~\ref{claim:pairs}.\ } This follows almost exactly the proof of~\cite[Lemma 17]{gao2023number}, with the only difference being the generating function for $(H_1,H_2)$ being
\[
F(z)=\sum_{n=1}^{\infty} \frac{(2n)!}{2\cdot 2n}\cdot 2\cdot \frac{z^{2n}}{(2n)!} = -\frac{1}{2}\log(1-z^2),
\]
instead of $-\frac{1}{2}\log(1-z^2)-z^2/2$ in~\cite[Lemma 17]{gao2023number}. This is due to the fact that there can be edges $e\in H_1$ and $e'\in H_2$ such that $e\neq e'$ but $e,e'$ have the same end vertices, in the pairing model $\P_*(n,d)$. \qed

It is helpful to observe that the number in Claim~\ref{claim:pairs} is asyptotically equal to $\sqrt{e}$ times the number in~\cite[Lemma 17]{gao2023number}.

Fix $(H_1,H_2)$. Then there are $(dn/2)_{n-k}$ ways to represent $H_1\cup H_2$ as a subgraph of $\G({\bfd}, {\bfs})$. Moreover, by Theorem~\ref{thm:bipartiteenum}, each of them appear in $\G({\bfd}, {\bfs})$ with probability
\begin{align*}
&\frac{N(\bfd',\bfs')}{N(\bfd,\bfs)}=
\frac{((d-2)n+2k)! d!^n 2^{dn/2}}{(d-1)!^{2k}(d-2)!^{n-2k}2^{dn/2-n+k} (dn)!}\exp\left(\phi(d,\alpha)+O(d^3/n)\right)
\end{align*}
where
\[
\bfd'=(\underbrace{d-1,\ldots,d-1}_{2k},\underbrace{d-2,\ldots,d-2}_{n-2k}),\quad \bfs'=(\underbrace{2,\ldots,2}_{dn/2-n+k}).
\]
and
\begin{align*}
\phi(d,\alpha)&=-\frac{(2k(d-1)(d-2)+(n-2k)(d-2)(d-3))\cdot 2(dn/2-n+k)}{2((d-2)n+2k)^2}+ \frac{d(d-1)n \cdot 2 dn/2}{2d^2n^2}\\
&=\frac{(-d+3)\alpha+2d-4}{2(d-2+\alpha)}.
\end{align*}
Thus,
\begin{align*}
  \ex Y^2 =&\ex Y+ \sum_{k=0}^{n/2-1}(1+O(n-2k)^{-1})\frac{n!}{2^k k!\sqrt{\pi(n-2k)/2}}(dn/2)_{n-k}   \frac{((d-2)n+2k)! d!^n 2^{dn/2}}{(d-1)!^{2k}(d-2)!^{n-2k}2^{dn/2-n+k} (dn)!}\\
&\times \exp\left(\phi(d,\alpha)+O(d^3/n)\right).
\end{align*}
Note also that this agrees exactly with~\cite[eq.~(16)]{gao2023number} for $\ex \hat Y^2$, except for an additional factor $\sqrt{e}$, and that the value of $\phi$ in~\cite{gao2023number} was
\[
\frac{4(d-2)^2-(d^2-5)\alpha^2-(2d^2-14d+20)\alpha}{4(d-2+\alpha)^2}
\]
for $\ex \hat Y^2$. Hence, following exactly the same argument in~\cite{gao2023number} for $\ex\hat Y^2$, we obtain that
\begin{align*}
\ex Y^2= &\ex\hat Y^2 \cdot\exp\Big(\frac{1}{2}+\frac{(-d+3)\bar\alpha+2d-4}{2(d-2+\bar\alpha)}-\frac{4(d-2)^2-(d^2-5)\bar\alpha^2-(2d^2-14d+20)\bar\alpha}{4(d-2+\bar\alpha)^2}\\
&+O(\sqrt{d/n}\log^3 n+d^3/n)
\Big),
\end{align*}
where $\bar\alpha=1/d$. Substituting $\bar\alpha$ above by $1/d$ and expand around $1/d$,
\begin{align*}
\ex Y^2= &\ex \hat Y^2 \cdot\exp\left(\frac{1}{2}+\frac{2d-3}{2d-2}-\frac{4d^2-10d+5}{4(d-1)^2}
+O(\sqrt{d/n}\log^3 n+d^3/n)
\right)\\
 =&  \ex \hat Y^2 \cdot\exp\left(\frac{1}{2}+\frac{1}{4d^2}+\frac{1}{2d^3}+
+O(d^{-4}+\sqrt{d/n}\log^3 n+d^3/n)
\right)
\end{align*}
We know from~\cite[Theorem 10]{gao2023number} that
\[
\ex \hat Y^2 = \left(1+\frac{1}{6d^3}+O(d^{-4}+d^3/n+\sqrt{d/n}\log^3 n)\right) (\ex\hat Y)^2.
\]
Hence, by~\eqref{eq:compare},
\begin{align*}
\ex Y^2&=\left(\frac{3}{2}+\frac{1}{4d^2}+\frac{2}{3d^3}+O(d^{-4}+d^3/n+\sqrt{d/n}\log^3 n)\right) (\ex\hat Y)^2\\
&=\left(1+\frac{1}{4d^2}+\frac{2}{3d^3}+O(d^{-4}+d^3/n+\sqrt{d/n}\log^3 n)\right) (\ex Y)^2.\qed
\end{align*}

Let $X$ denote the number of double edges in $\P_*(n,d)$ and let $W$ denote the number of triangles in $\P_*(n,d)$. 

\begin{lemma}\label{lem:covariance}
    \begin{align*}
        \ex X &= \frac{(d-1)^2}{4}(1+O(d^3/n)),\quad \var X=\frac{(d-1)^2}{4}(1+O(d^3/n)) \\ 
        \ex W &= \frac{(d-1)^3}{6}(1+O(d^3/n)),\quad \var W=\frac{(d-1)^3}{6}(1+O(d^3/n))\\
        \Cov(X,W) &= O(d^6/n)\\
        \Cov(X, Y) &= \left(1/d^2+2/d^3+O(1/d^4+d/n)\right)\ex X\ex Y\\
        \Cov(W, Y) &= \left(-1/d^3+O(1/d^4+d/n)\right)\ex W\ex Y.
    \end{align*}
\end{lemma}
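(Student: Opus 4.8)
The plan is to compute all six quantities by the same first- and second-moment machinery used for $Y$, treating $X$ (double edges) and $W$ (triangles) as sums of indicator variables over the relevant short cycles in $\P_*(n,d)$, and expressing every expectation via the bipartite enumeration formula of Theorem~\ref{thm:bipartiteenum} applied to $\G(\bfd,\bfs)$ with $\bfd=(d,\ldots,d)$ and $\bfs=(2,\ldots,2)$, exactly as in the proof of the previous lemma. For $\ex X$ one sums over the $\binom n2$ candidate vertex pairs $\{u,v\}$ and the $\binom d2\binom d2$ ways of choosing two pairs joining $u$ to $v$; each such configuration is a $2$-edge subgraph $H$ of $\G(\bfd,\bfs)$, and its probability of being present is $N(\bfd',\bfs')/N(\bfd,\bfs)$, which Theorem~\ref{thm:bipartiteenum} evaluates to $(d(d-1))^2/(dn)^2\cdot(1+O(d^3/n)) = (d-1)^2/n^2\cdot(1+O(d^3/n))$ per configuration, giving $\ex X=\tfrac{(d-1)^2}{4}(1+O(d^3/n))$; the analogous count for $W$ uses triples $\{u,v,w\}$ and three pairs, yielding $\ex W=\tfrac{(d-1)^3}{6}(1+O(d^3/n))$. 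For the variances one computes $\ex X^2$ (resp. $\ex X(X-1)$) by summing over ordered pairs of double-edge configurations, splitting according to how many vertices/points they share; the disjoint (dominant) term reproduces $(\ex X)^2$, and the overlapping terms — sharing one vertex, sharing a vertex pair, etc. — contribute lower-order corrections, the leading surviving correction being the ``diagonal'' term where the two configurations coincide, which is exactly $\ex X$, so $\var X = \ex X + (\text{small}) = \tfrac{(d-1)^2}{4}(1+O(d^3/n))$; the same bookkeeping handles $\var W$. For $\Cov(X,W)$ one notes that a double edge and a triangle are ``almost independent'': the product term over disjoint configurations cancels against $\ex X\ex W$, and every overlapping configuration involves sharing at least one vertex among $O(d^2)\cdot O(d^3)$ choices with probability $O(1/n)$ relative to the product of the individual probabilities, giving $\Cov(X,W)=O(d^6/n)$.

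The covariances $\Cov(X,Y)$ and $\Cov(W,Y)$ are the more delicate entries, and I expect these to be the main obstacle, since they require the ``conditional'' enumeration in the presence of a perfect matching — essentially the same computation that produced the $\phi(d,\alpha)$ term and the $1/(4d^2)$, $2/(3d^3)$ corrections in $\ex Y^2$. Here I would write $\Cov(X,Y)=\ex(XY)-\ex X\ex Y$ and compute $\ex(XY)$ by summing, over all perfect matchings $H$ and all double-edge configurations $D$, the probability that $H\cup D$ embeds in $\G(\bfd,\bfs)$; the key point is to split according to whether the double edge uses an edge of $H$ or not. When $D$ is edge-disjoint from $H$ the conditional presence probability of $D$ given $H$ is, by Theorem~\ref{thm:bipartiteenum} applied to the degree sequence with $H$ already removed, $(d-1)^2(d-2)^2/((d-1)n)^2\cdot(1+O(\cdot))$, which after summing gives $\ex X\ex Y$ times a factor $1 + c/d + \cdots$; the small-order terms in the exponent of Theorem~\ref{thm:bipartiteenum} (the $\lambda$-type quadratic corrections, shifted because $H$ reduces all degrees by one) are precisely what generate the stated $1/d^2 + 2/d^3$, so I would expand $\phi$-type expressions around $\alpha$ of the right order and match coefficients, reusing the algebra from the $\ex Y^2$ computation. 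The $\Cov(W,Y)$ case is the same but with a triangle, and the sign flip to $-1/d^3$ should emerge from the third-order term in the same expansion (triangles, being $3$-cycles, contribute with the opposite parity in the cluster-type expansion, consistent with the $\tfrac16 d^{-3}$ vs. $\tfrac14 d^{-2}$ pattern already visible in $\ex Y^2$).

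Throughout, the error terms $O(d^3/n)$ come from the $O(\Delta(\bfd)^4/M(\bfd))$ error in Theorem~\ref{thm:bipartiteenum} with $\Delta=O(d)$ and $M=\Theta(dn)$, and the $O(d/n)$ and $O(d^6/n)$ terms come from the overlap/containment corrections in the second-moment sums (each overlap costs a factor $O(1/n)$ against an $O(d^{O(1)})$ count of configurations). A clean way to organize the variance and covariance computations is to pass, as in the previous lemma, to the corresponding quantities $\hat X,\hat W,\hat Y$ in $\G(n,d)$, for which the analogous moment estimates are available (or follow by the identical enumeration), and then transfer via the relation $\ex Y = \ex\hat Y\cdot e^{1/4+O(d^3/n)}$ together with its joint analogues $\ex(XY) = \ex(\hat X\hat Y)\cdot e^{1/4+O(d^3/n)}\cdot(1+O(d^3/n))$, obtained by noting that the extra ``double-edges-allowed'' configurations contributing to $\P_*$ but not to $\G(n,d)$ form a lower-order correction; the constant $e^{1/4}$ cancels in the normalized covariance $\Cov(X,Y)/(\ex X\ex Y)$. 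The routine but lengthy part is the coefficient extraction in the exponential; the conceptual content is entirely in setting up the right conditional degree sequences and reading off the leading corrections, all of which parallels the already-completed $\ex Y^2$ argument.
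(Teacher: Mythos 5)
Your overall framework coincides with the paper's: work in the bipartite representation $\B(\bfd,\bfs)$, obtain $\ex X$, $\ex W$, the Poisson-type variances and $\Cov(X,W)$ from subgraph-presence probabilities, and obtain $\Cov(X,Y)$, $\Cov(W,Y)$ by conditioning on a perfect matching $H$ and splitting the double edges/triangles according to whether they use an edge of $H$ (the paper does this via the conditional edge-probability result, Theorem~\ref{thm:bipartiteEdgeProb}, rather than via ratios of Theorem~\ref{thm:bipartiteenum} counts, but that difference is cosmetic).

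The genuine gap is where you locate the source of the coefficients $1/d^2+2/d^3$ and $-1/d^3$, which are the entire point of the lemma (they must cancel the $\tfrac{1}{4d^2}+\tfrac{2}{3d^3}$ excess in $\ex Y^2$ in Lemma~\ref{lem:Ystar}). You attribute them to the $\lambda$-type exponential corrections of Theorem~\ref{thm:bipartiteenum}, ``shifted because $H$ reduces all degrees by one,'' and for triangles to an opposite-parity effect in a cluster-type expansion. Those exponential corrections change only by $O(d/n)$ under the degree shift, which is negligible against $1/d^4$ in the regime $d=o(n^{1/7}/\log n)$, so they cannot produce a $\Theta(1/d^2)$ effect. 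The actual mechanism is elementary conditional counting: given $H\subseteq P$, the residual degrees are $d-1$, so double edges avoiding $H$ contribute $\approx (d-2)^2/4$ (a deficit of order $1/d$ relative to $\ex X=(d-1)^2/4$), while double edges containing an edge of $H$ contribute $\approx (d-1)/2$ (a surplus of order $1/d$); these nearly cancel, giving
\[
\frac{\ex(X\mid H\subseteq P)}{\ex X}=\frac{(d-2)^2+2(d-1)}{(d-1)^2}=1+\frac{1}{(d-1)^2},
\]
whence $1/d^2+2/d^3+O(1/d^4)$. Likewise $\ex(W\mid H\subseteq P)/\ex W=\bigl((d-2)^3+3(d-1)(d-2)\bigr)/(d-1)^3=1-1/(d-1)^3$, which is the sign flip — no cluster-expansion parity involved. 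Your sketch instead asserts that the $H$-disjoint part alone already gives $\ex X\ex Y\,(1+c/d+\cdots)$ with the overlapping part a minor correction; in fact both parts deviate at order $1/d$ and the answer lives entirely in their cancellation, so the ``coefficient extraction'' you defer would be performed on the wrong terms as planned. (A minor slip: your bookkeeping for $\ex X$ double-counts — either sum over $\binom n2\binom d2^2$ point-level configurations each of probability $\Theta\bigl(1/(dn)^2\bigr)$, or over $\binom n2$ vertex pairs each with double-edge probability $\approx (d-1)^2/2n^2$, but not a mixture — though this only affects constants in a sketch.)
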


\proof We apply the bipartite version of~\cite[Theorem 1]{gao2023subgraph}.
\begin{align*}
\ex X=& \binom{n}{2} \binom{(dn/2)}{2} \frac{4d^2(d-1)^2}{(dn)^4} (1+O(d^3/n))=\frac{(d-1)^2}{4}(1+O(d^3/n))\\
\ex W=& \binom{n}{3} (dn/2)_3 \frac{2^3 d^3(d-1)^3}{(dn)^6}(1+O(d^3/n))=\frac{(d-1)^3}{6}(1+O(d^3/n))\\
\Cov(X,W) =& \ex XW-\ex X\ex W = O(d^6/n).
\end{align*}

\begin{align*}
\Cov(X,Y) =& \ex Y\left(\frac{n^2}{4}\frac{(d-1)^2_2}{(dn-n)^2}+\frac{n}{2} \frac{(d-1)^2}{dn-n}\right)(1+O(d^3/n))-\ex X\ex Y\\
=&\left(1/d^2+2/d^3+O(1/d^4+d/n)\right)\ex X\ex Y\\ 
\Cov(W,Y) =& \left(-1/d^3+O(1/d^4+d/n)\right)\ex W\ex Y
\end{align*}

\qed

Project $Y$ onto $X$ and $W$ as follows. Let 
\begin{align*}
Y^* &=aX+bW+c\\
a &=\Cov(X,Y)/\var X,\quad b=\Cov(W,Y)/\var W,\quad c=\ex Y-a\ex X-b\ex W.
\end{align*}

\begin{lemma}\label{lem:Ystar}
    $
\var(Y-Y^*)=O(\xi)(\ex Y)^2,    
    $ where $\xi=d^{-4}+d^3/n+\sqrt{d/n}\log^3n$.
\end{lemma}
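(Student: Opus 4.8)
The plan is to exploit that $Y^*$ is, up to the small correlation between $X$ and $W$, the orthogonal projection of $Y$ onto $\operatorname{span}\{1,X,W\}$, and to compute the residual variance in closed form. Since $c$ is a constant, $\var(Y-Y^*)=\var(Y-aX-bW)$, and expanding by bilinearity of $\Cov$,
\begin{align*}
\var(Y-Y^*)&=\var Y+a^2\var X+b^2\var W-2a\Cov(X,Y)-2b\Cov(W,Y)+2ab\Cov(X,W)\\
&=\var Y-\frac{\Cov(X,Y)^2}{\var X}-\frac{\Cov(W,Y)^2}{\var W}+\frac{2\,\Cov(X,Y)\,\Cov(W,Y)\,\Cov(X,W)}{\var X\,\var W},
\end{align*}
where the second line uses $a=\Cov(X,Y)/\var X$ and $b=\Cov(W,Y)/\var W$, so that the $a^2\var X$ and $-2a\Cov(X,Y)$ terms collapse to $-\Cov(X,Y)^2/\var X$, and likewise for $b$. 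Had $X$ and $W$ been exactly uncorrelated this would be the usual ANOVA identity; here $\Cov(X,W)=O(d^6/n)$ only contributes a small correction.

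Next I would substitute the estimates already at hand. From~\eqref{eq:conc2}, $\var Y=\ex Y^2-(\ex Y)^2=\big(\tfrac1{4d^2}+\tfrac2{3d^3}+O(\xi)\big)(\ex Y)^2$. From Lemma~\ref{lem:covariance}, $\ex X/\var X=1+O(d^3/n)$, whence $(\ex X)^2/\var X=\tfrac{(d-1)^2}{4}\big(1+O(d^3/n)\big)$; combining this with $\Cov(X,Y)=\big(\tfrac1{d^2}+\tfrac2{d^3}+O(\tfrac1{d^4}+\tfrac dn)\big)\ex X\,\ex Y$, a routine expansion to order $d^{-3}$ (tracking how the factor $(d-1)^2$ multiplies the $1/d^2,1/d^3$ terms) gives
\[
\frac{\Cov(X,Y)^2}{\var X}=\Big(\tfrac1{4d^2}+\tfrac1{2d^3}+O\big(\tfrac1{d^4}+\tfrac dn\big)\Big)(\ex Y)^2 .
\]
In the same way, using $(\ex W)^2/\var W=\tfrac{(d-1)^3}{6}\big(1+O(d^3/n)\big)$ and $\Cov(W,Y)=\big(-\tfrac1{d^3}+O(\tfrac1{d^4}+\tfrac dn)\big)\ex W\,\ex Y$,
\[
\frac{\Cov(W,Y)^2}{\var W}=\Big(\tfrac1{6d^3}+O\big(\tfrac1{d^4}+\tfrac dn\big)\Big)(\ex Y)^2 .
\]
Finally, since $\Cov(X,Y),\Cov(W,Y)=\Theta(\ex Y)$, $\var X=\Theta(d^2)$, $\var W=\Theta(d^3)$ and $\Cov(X,W)=O(d^6/n)$, the cross term is $O(d/n)(\ex Y)^2$.

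Substituting all of this into the closed form, the $d^{-2}$ contributions cancel, and — crucially — so do the $d^{-3}$ contributions, since $\tfrac23-\tfrac12-\tfrac16=0$. What survives is $O(d^{-4}+d/n+\xi)(\ex Y)^2=O(\xi)(\ex Y)^2$, because $d/n\le d^3/n\le\xi$ and $d^{-4}\le\xi$, which is the claim. The one delicate point — and the step I expect to be the main (purely computational) obstacle — is carrying the expansions of $\Cov(X,Y)^2/\var X$ and $\Cov(W,Y)^2/\var W$ accurately enough, in particular tracking how the polynomial factors $(d-1)^2$ and $(d-1)^3$ coming from $\ex X,\ex W,\var X,\var W$ interact with the $1/d^2,1/d^3$ factors, since the lemma depends on the \emph{exact} vanishing of the $d^{-2}$ and $d^{-3}$ coefficients rather than on any crude bound.
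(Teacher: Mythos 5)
Your proof is correct and takes essentially the same route as the paper: expand $\var(Y-Y^*)$ via the projection identity (the paper phrases it through $\Cov(Y-Y^*,Y^*)=-2ab\Cov(X,W)=O(d/n)(\ex Y)^2$, you expand the quadratic form directly, which is the same computation), then substitute~\eqref{eq:conc2} and Lemma~\ref{lem:covariance} and use the exact cancellation $\tfrac{1}{4d^2}+\tfrac{2}{3d^3}-\tfrac{1}{4d^2}-\tfrac{1}{2d^3}-\tfrac{1}{6d^3}=0$. Your expansions of $\Cov(X,Y)^2/\var X$ and $\Cov(W,Y)^2/\var W$ agree with the paper's values, so no gap.
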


\proof
By definition of $a,b, c$, $\Cov(Y-Y^*,Y^*)=-2ab\Cov(X,W)=O(d/n)(\ex Y)^2$.
Thus, 
\begin{align*}
\var(Y-Y^*) &= \Cov(Y,Y-Y^*)-\Cov(Y^*,Y-Y^*)\\
&= \var(Y)-a\Cov(Y,X)-b\Cov(Y,W)-\Cov(Y^*,Y-Y^*) \\
&= \left(\frac{1}{4d^2}+\frac{2}{3d^3}- \left(1/d^2+2/d^3\right)^2\ex X- \left(-1/d^3\right)^2\ex W+O(\xi)\right)(\ex Y)^2=O(\xi)(\ex Y)^2.\qed
\end{align*}

Using a standard switching argument we obtain the following concentration bound on $X$ and $W$. We skip the proof. 
\begin{lemma}~\label{lem:concentrationXW}
    Let $C>0$ be sufficiently large. 
    \begin{enumerate}[(a)]
        \item $\pr(|X-\ex X|>C\sqrt{\log d\ex X}) < 1/d^2$;
        \item $\pr(|W-\ex W|>C\sqrt{\log d\ex W}) < 1/d^2$
    \end{enumerate}
\end{lemma}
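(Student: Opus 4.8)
The plan is to prove the two tail bounds on $X$ (the number of double edges) and $W$ (the number of triangles) in $\P_*(n,d)$ by the standard switching method, treating both cases in parallel since the structure is identical; I will write out the argument for $X$ and indicate the trivial modifications for $W$.

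\medskip

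\textbf{Setup.} Fix $n$ even and $d\ge 3$ (with $d=o(n^{1/7}/\log n)$ as in the ambient hypotheses, so that $\ex X,\ex W$ have the stated orders). Let $\Omega=\Omega(\P_*(n,d))$ be the set of loopless pairings, endowed with the uniform measure. For an integer $m\ge 0$ let $\Omega_m=\{P\in\Omega\st X(P)=m\}$. To control the upper tail $\pr(X\ge m+1)$ I will define a \emph{forward switching} that removes one double edge: given $P\in\Omega_{m+1}$, select one of the $m+1$ double edges, i.e.\ a pair of vertices $u,v$ joined by two pairs $\{p_1,q_1\},\{p_2,q_2\}$ (with $p_1,p_2$ in bin $u$ and $q_1,q_2$ in bin $v$), select two further pairs $\{a_1,b_1\},\{a_2,b_2\}$ "generic" in $P$ (not touching $u,v$, not creating loops or new multiplicities), delete all four and re-pair as $\{p_1,a_1\},\{b_1,p_2\},\{q_1,a_2\},\{b_2,q_2\}$ (or the analogous valid rewiring). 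The reverse operation is a \emph{backward switching}. The number of forward switchings available from $P\in\Omega_{m+1}$ is $(m+1)\cdot(1+o(1))(dn)^2/4\cdot\Theta(1)$ after subtracting $O(d^3 n)$ forbidden choices, and the number of backward switchings reaching a fixed $P'\in\Omega_m$ is $(1+o(1))\,\Theta(d^2 n^2)$ but with no factor growing in $m$ — more precisely, it is bounded above by $C'd^2 n^2$ for an absolute constant $C'$, uniformly in $m$ in the relevant range. Comparing,
\[
\frac{|\Omega_{m+1}|}{|\Omega_m|}\le \frac{\#\{\text{backward from }\Omega_m\}}{\#\{\text{forward into }\Omega_{m+1}\}\big/|\Omega_{m+1}|}\le \frac{C'' }{m+1}\cdot\ex X\cdot(1+o(1)),
\]
so that the ratio is at most $1/2$ once $m\ge 4C''\ex X$, giving geometric decay of $|\Omega_m|/|\Omega|$ beyond the mean. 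Summing the geometric series from $m_0=\lceil \ex X + C\sqrt{\log d\,\ex X}\rceil$ yields $\pr(X\ge m_0)\le \exp(-c\,(C\sqrt{\log d\,\ex X})^2/\ex X)=\exp(-cC^2\log d)<1/d^2$ for $C$ large. The lower tail $\pr(X\le \ex X-C\sqrt{\log d\,\ex X})$ is handled symmetrically using the backward switching to bound $|\Omega_{m-1}|/|\Omega_m|$ from above when $m$ is below the mean; here the ratio is $\le \Theta(m/(\ex X))$, which is $\le 1/2$ once $m\le \ex X/(2\Theta)$, again producing geometric decay downward and hence a Chernoff-type bound $<1/d^2$. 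For $W$ the forward switching removes one triangle (a $6$-cycle of pairs forming a triangle after contraction) by deleting its three pairs together with three generic pairs and rewiring; the counting gives $|\Omega_{m+1}|/|\Omega_m|\le \Theta((m+1)^{-1})\,\ex W\,(1+o(1))$ and the same two-sided geometric argument applies with $\ex W=\Theta(d^3)$.

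\medskip

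\textbf{Error terms.} I must check that the multiplicative $(1+o(1))$ factors coming from (a) the forbidden choices in each switching — bad pairs creating loops, repeated edges, or overlapping the prescribed vertices, numbering $O(d^3 n)$ out of $\Theta(d^2 n^2)$, hence a relative error $O(d/n)$ — and (b) the variation of these counts over the window $|m-\ex X|\le C\sqrt{\log d\,\ex X}$ (a relative error $O(\sqrt{\log d\,\ex X}/n)=o(1)$ since $\ex X=O(d^2)$ and $d=o(n^{1/7}/\log n)$), do not accumulate badly over the $O(\sqrt{\log d\,\ex X})$ steps of the telescoping product: the total accumulated factor is $\exp(O(\sqrt{\log d\,\ex X}\cdot d/n))=1+o(1)$, which is absorbed into the constant $c$ in the exponent at the cost of enlarging $C$.

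\medskip

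\textbf{Main obstacle.} The delicate point is the uniform-in-$m$ upper bound on the number of backward switchings producing a fixed pairing $P'\in\Omega_m$: one must verify that choosing where to \emph{insert} a new double edge into $P'$ does not gain a factor that grows with $m$, and simultaneously that the forward count from $\Omega_{m+1}$ genuinely carries the full factor $m+1$ (one cannot lose it to overcounting, since the double edge being destroyed is identified canonically by the switching). This is exactly the kind of bookkeeping done in \cite[proof of Lemma 17]{gao2023number} and in standard references on switchings for the pairing model, so I will cite those and only highlight the two modifications: the absence of the double-edge conditioning (we are in $\P_*$, not $\G(n,d)$), which only \emph{adds} configurations and hence can only help the relevant inequalities after a trivial sign check, and the loopless conditioning, which forbids the $O(dn)$ re-pairings that would put two of the new pairs inside a single bin — a lower-order correction already absorbed in the $O(d/n)$ error. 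Given these, the bounds $\pr(|X-\ex X|>C\sqrt{\log d\,\ex X})<1/d^2$ and $\pr(|W-\ex W|>C\sqrt{\log d\,\ex W})<1/d^2$ follow, which is the content of the lemma.
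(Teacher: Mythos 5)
Your overall plan (estimate the ratios $|\Omega_{m+1}|/|\Omega_m|$ by switchings, telescope over the deviation window, sum the tails, and treat $W$ analogously) is exactly the ``standard switching argument'' the paper invokes and then skips, so the route is the intended one. However, as written there is a genuine quantitative gap: your switching counts are only stated up to unspecified absolute constants (a $\Theta(1)$ in the forward count and ``bounded above by $C'd^2n^2$'' for the backward count), which yields only $|\Omega_{m+1}|/|\Omega_m|\le C''\,\ex X\,(1+o(1))/(m+1)$ with some $C''$ that you never show is $1+o(1)$. From such a bound the ratio is guaranteed to drop below $1/2$ only once $m=\Theta(C''\ex X)$, i.e.\ a constant factor above the mean; it tells you nothing about the regime $m\in[\ex X,\ \ex X+C\sqrt{\ex X\log d}]$, since there the bound is merely $\approx C''>1$. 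Hence the step ``summing the geometric series from $m_0=\lceil \ex X+C\sqrt{\log d\,\ex X}\rceil$ yields $\pr(X\ge m_0)\le\exp(-c\,C^2\log d)$'' does not follow: with $\ex X=\Theta(d^2)$ and deviation scale only $\Theta(d\sqrt{\log d})$, a constant-factor ratio bound can never detect deviations of order $\sqrt{\ex X\log d}$. The same objection applies verbatim to your lower-tail argument (``ratio $\le\Theta(m/\ex X)$'') and to the case of $W$.

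To repair this you must carry out the sharp two-sided counts: show $f(P)=(m+1)\,N_f\,(1+O(\varepsilon))$ and $b(P')=N_b\,(1+O(\varepsilon))$ with explicit leading terms whose ratio $\mu:=N_b/N_f$ equals $\ex X$ up to a relative error $o(\sqrt{\log d}/d)$ (in fact $\mu=(d-1)^2/4\,(1+O(d/n))$ suffices, matching Lemma~\ref{lem:covariance}), so that $|\Omega_{m+1}|/|\Omega_m|=\mu(1+O(\varepsilon))/(m+1)$ with $\varepsilon=O(d/n)$, uniformly for $m=O(d^2\log d)$ say. Then the telescoping product over the window of length $t=C\sqrt{\mu\log d}$ gives $\exp\(-\Theta(t^2/\mu)+O(\varepsilon t)\)$, and your ``Error terms'' observation that $\varepsilon t=O(d^2\sqrt{\log d}/n)=o(1)$ correctly disposes of the accumulated error --- but only after the leading constant has been pinned to $1$, which is precisely the bookkeeping your write-up defers to the references rather than performs. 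A secondary, cosmetic point: the displayed inequality mixing per-configuration counts with totals should be stated as $|\Omega_{m+1}|\min_P f(P)\le\sum_{P'\in\Omega_m}b(P')\le|\Omega_m|\max_{P'}b(P')$, from which the ratio bound follows cleanly.
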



\noindent {\em Proof of Theorem~\ref{thm:P-P}.\ } 
We first show that 
\begin{align}
|Y-\ex Y|=O(\sqrt{\log d}/d)\ex Y,\quad \text{with probability at least $1-O(d^{-2})$}.\label{eq:conc}     
\end{align}

Let $C>0$ be a sufficiently large constant.

\begin{align*}
    \pr(|Y-\ex Y|\ge (2C\sqrt{\log d}/d) \ex Y) \le &    \pr(|Y-Y^*|\ge (C\sqrt{\log d}/d) \ex Y) \\
    &+     \pr(|Y^*-\ex Y^*|\ge (C\sqrt{\log d}/d) \ex Y^*). 
\end{align*}
By Lemma~\ref{lem:Ystar}, $$ \pr(|Y-Y^*|\ge (C\sqrt{\log d}/d) \ex Y)=O\left((d^{2}/\log d)\frac{\var (Y-Y^*)}{(\ex Y)^2}\right) =O(d^{-2}/\log d). $$
By Lemma~\ref{lem:concentrationXW} (with sufficiently large $C$), with probability at least $d^{-2}$,
\[
|Y^*-\ex Y^*| =O(a\sqrt{\log d\cdot \ex X}+b\sqrt{\log d \cdot\ex W}) = O(\sqrt{\log d}/d) \ex Y.
\]
We have established~\eqref{eq:conc}.

Let $\Gamma$ be the set of perfect matchings on $[n]$, and let $D$ be a bipartite graph on $S\cup T$ where $S=\Omega(\P_*(n,d))\times \Gamma$ and $T=\Omega(\P_*(n,d+1))$, and $(P,M)\in S$ is adjacent to $P'\in T$ if $P+M=P'$. Let $\eps=C\sqrt{\log d}/d$ for some large $C>0$. Define $S_{\text{good}}$ and $T_{\text{good}}$ as in 
Corollary~\ref{cor:coupling2}.
Then, every element in $S$ has degree one, and thus $|S_{\text{good}}|=|S|$. On the other hand, for each $P'\in T$, $\text{deg}_D(P')=Y(P')$. The desired coupling now follows by~\eqref{eq:conc} and Corollary~\ref{cor:coupling2}. \qed


    

\section{Proof of Theorem~\ref{thm:P-G}}




The proof strategy is similar to that of Theorem~\ref{thm:P-P}. But instead of considering the number of perfect matchings in $\P_*(n,d)$, we  condition on the number of double edges, and considering perfect matchings that ``cover'' all the double edges.

Let $\P^{(i)}(n,d)$ denote the set of pairings in $\P_*(n,d)$ containing exactly $i$ pairwise vertex-disjoint double edges, and no other types of multiple edges. 

Given a pairing $P\in \P^{(i)}(n,d)$, a perfect matching in $P$ covering all double edges of $P$ is a set $M$ of pairs such that $M$ corresponds to a perfect matching in $G[P]$, and for every double edge $uv$ in $G[P]$, there is a pair in $M$ that joins a point in $u$ and a point in $v$.

Let $Y=Y_n^{(i)}$ be the random variable denoting the number of perfect matchings of $P\sim\P^{(i)}(n,d)$ which covers every double edge of $P$.

\subsection{Concentration of $Y_n^{(i)}$ in $\P^{(i)}(n,d)$}

\begin{lemma}\label{lem:YPairing} Let $i=O(d^2)$. Then,
\begin{equation}
\ex Y^2=\left(1+\frac{1}{6d^3}+O\left(d^{-4}+\frac{d^3}{n}+\sqrt{\frac{d}{n}}\log^3 n\right)\right)(\ex Y)^2.\label{eq:conc1}
\end{equation}

\end{lemma}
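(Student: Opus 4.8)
\textbf{Proof plan for Lemma~\ref{lem:YPairing}.}

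The plan is to mirror the second-moment computation carried out for the plain loopless configuration model, tracking how conditioning on the event $\P^{(i)}(n,d)$ — exactly $i$ disjoint double edges and no other multiplicity — perturbs the counts, and then showing the perturbation is negligible at the level of precision $O(d^{-4}+d^3/n+\sqrt{d/n}\log^3 n)$. First I would set up the bipartite encoding exactly as before: $U=[n]$, $V=\{e_1,\dots,e_{dn/2}\}$, $\bfd=(d,\dots,d)$, $\bfs=(2,\dots,2)$, so that a pairing corresponds to $\B(\bfd,\bfs)$. A pairing in $\P^{(i)}(n,d)$ is then a bipartite graph in which exactly $i$ of the ``pair-vertices'' in $V$ have their two neighbours coinciding as a double edge (in vertex-disjoint fashion) and no other coincidences occur; a perfect matching covering all double edges is a subgraph $H$ selecting, for each such pair-vertex of the $i$ double edges, one of its two pairs, and selecting a system of $n/2-i$ further pair-vertices with distinct endpoints covering the rest of $[n]$. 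I would compute $\ex Y$ and $\ex Y^2$ by: (1) counting the combinatorial number of ways to place the $i$ double edges and the covering matching $H$, (2) using Theorem~\ref{thm:bipartiteenum} (McKay's bipartite enumeration) to compute the probability that the resulting fixed subgraph sits inside $\B(\bfd,\bfs)$ conditioned on the $\P^{(i)}$ structure — which, since conditioning on $i=O(d^2)$ disjoint double edges only changes $M(\bfd)$ and the degree-square sums by $O(d^2)$ relative quantities of order $O(n)$ in the exponent, contributes only to the $O(d^3/n)$-type error. The key observation is that the $i$ forced double edges interact with the random covering matching only through lower-order terms, so the leading exponential factors are identical to the unconditioned case.

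The cleanest route is comparison rather than raw computation. Let $\hat Y$ denote the number of perfect matchings in $\G(n,d)$ (as in equation~\eqref{eq:compare} of the excerpt) and recall $\ex \hat Y^2 = (1+\tfrac{1}{6d^3}+O(d^{-4}+d^3/n+\sqrt{d/n}\log^3 n))(\ex\hat Y)^2$ from~\cite[Theorem 10]{gao2023number}. I would argue that conditioning on $\P^{(i)}(n,d)$ with $i=O(d^2)$ produces, for both $\ex Y$ and $\sqrt{\ex Y^2}$, a multiplicative factor that is the same up to $\exp(O(d^3/n+\sqrt{d/n}\log^3 n))$ as the factor relating $\G(n,d)$-matching counts to the corresponding quantities — because the double edges are vertex-disjoint and $O(d^2)$ in number, their removal/contraction changes all the relevant degree sums and pair counts by $O(d^2 \cdot \mathrm{poly}(d))$, hence changes each exponent term by $O(d^4/n) \subseteq O(d^3/n)$ under the hypothesis $d=o(n^{1/7}/\log n)$ inherited from the ambient setting. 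In particular the ratio $\ex Y^2/(\ex Y)^2$ equals $\ex \hat Y^2/(\ex\hat Y)^2$ up to a factor $\exp(O(d^3/n+\sqrt{d/n}\log^3 n))$, and since $\ex\hat Y^2/(\ex\hat Y)^2 = 1+\tfrac{1}{6d^3}+O(d^{-4}+\cdots)$, this factor is $1+O(d^{-3})$, so multiplying by $\exp(O(d^3/n+\cdots)) = 1+O(d^3/n+\cdots)$ preserves the stated expansion. Concretely I would isolate, as in the proof of the analogous lemma in Section~4, a ``pair-overlap'' parameter $\alpha=k/(n/2)$ where $k=|H_1\cap H_2|$ for two covering matchings $H_1,H_2$, re-derive the analogue of Claim~\ref{claim:pairs} (the generating-function bookkeeping is unchanged except that $n/2$ becomes $n/2-i$, shifting things by $O(i/n)=O(d^2/n)$), and re-derive the analogue of $\phi(d,\alpha)$, checking that it differs from the $\G(n,d)$ version only by terms that vanish faster than $d^{-3}$.

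The main obstacle is controlling the \emph{interaction} between the $i$ forced double edges and the two covering matchings in the second-moment sum: a covering matching must use one pair from each forced double-edge vertex, so $H_1$ and $H_2$ are not free to overlap arbitrarily on those $i$ vertices, and one must check that summing over the (at most $2^i$, but really $O(1)$ per double edge up to symmetry) local configurations does not produce a factor comparable to $1+\Theta(d^{-3})$. I expect this to be fine because each forced double edge is a rank-one local constraint affecting $O(1)$ vertices and the overlap contribution per double edge is $1+O(1/d)$ times the ``expected'' $1/(n/2)$-type term, so the aggregate over $i=O(d^2)$ double edges is $\exp(O(d^2/n))$ in the exponent — safely inside the error term — but making this rigorous requires carefully separating, in the bipartite enumeration exponent $-\sum s_i(s_i-1)\sum t_j(t_j-1)/(2M^2)$, the contribution of the modified degrees at the $2i$ affected pair-vertices and the affected endpoints in $[n]$, and verifying cancellation between the forward count (choosing the covering matching) and the McKay probability. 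Once that bookkeeping is in place, the expansion~\eqref{eq:conc1} follows by the same Taylor expansion around $\bar\alpha=1/d$ used in Section~4, and the $\tfrac{1}{6d^3}$ coefficient is inherited unchanged from $\ex\hat Y^2/(\ex\hat Y)^2$ because the double-edge corrections enter only at order $d^{-4}$ and below.
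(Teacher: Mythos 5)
Your overall strategy (a gao2023number-style second-moment computation, Laplace summation over the overlap parameter around $\bar\alpha=1/d$) is the right one, but the pivot of your plan is a comparison claim that you assert rather than prove, and the heuristic you offer for it is demonstrably unreliable in exactly this setting. You claim that conditioning on $\P^{(i)}(n,d)$ and restricting to covering matchings changes $\ex Y$ and $\sqrt{\ex Y^2}$ by the \emph{same} multiplicative factor up to $\exp(O(d^3/n+\sqrt{d/n}\log^3 n))$, so that the ratio $\ex Y^2/(\ex Y)^2$ is inherited from $\G(n,d)$, on the grounds that the double edges are a local, $O(d^2)$-sized perturbation whose effect on degree sums and pair counts is $O(d^4/n)$. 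But the same locality reasoning, applied to compare $\P_*(n,d)$ with $\G(n,d)$, would predict that the unconditioned pairing model also has ratio $1+\tfrac{1}{6d^3}+\cdots$, whereas the paper's own computation (equation~\eqref{eq:conc2} and the remark following Lemma~\ref{lem:YPairing}) shows that ratio is $1+\tfrac{1}{4d^2}+\tfrac{2}{3d^3}+\cdots$: local, $O(1)$-vertex pair-coincidence effects shift the ratio at order $d^{-2}$, far above the $O(d^{-4})$ precision you need. So the statement that the interaction between the $i$ forced double edges, the covering constraint, and the overlap structure of $H_1,H_2$ contributes only $\exp(O(d^2/n))$ is precisely what has to be established by explicit computation; you flag it as the main obstacle but then only ``expect it to be fine,'' which leaves the $\tfrac{1}{6d^3}$ coefficient unproved. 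The paper resolves it by actually carrying out the conditioned computation: it fixes the double edges as $\{\{2j-1,2j\}\}_{j\le i}$ by symmetry, writes $Y=2^iZ$ with $Z$ the number of matchings on $S=[n]\setminus[2i]$, computes $\ex Z$ and $\ex Z^2$ exactly, and checks that the $i$-dependent factors cancel in the ratio while the exponent evaluated at $\bar\alpha=1/d$ reproduces $\tfrac{1}{6d^3}$.

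A second, more technical issue: you propose to run the computation in the bipartite encoding via Theorem~\ref{thm:bipartiteenum} ``conditioned on the $\P^{(i)}$ structure,'' but that theorem enumerates bipartite graphs with given degrees and gives no handle on the event ``exactly $i$ vertex-disjoint double edges and no other multiplicities,'' which is not a subgraph-containment event. The paper's device is to pass to the quotient: pairings in the conditioned space correspond (uniformly) to simple graphs on $[n]$ with degree sequence $(d-2,\ldots,d-2,d,\ldots,d)$ avoiding the edge set $X$ of the forced double edges, so that the nonbipartite estimate with forbidden edges (Theorem~\ref{thm:subgraphenum}) applies directly to $N(\bfg,X\cup H)/N(\bfg',X)$. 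Without this (or an equivalent reduction), your step (2) does not go through with the tools available in the paper. In short: the skeleton of your plan is compatible with the paper's proof, but the central cancellation is the whole content of the lemma and is missing from your argument.
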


{\bf Remark}. Comparing~\eqref{eq:conc2} with~\eqref{eq:conc1}, we see that the variance in~\eqref{eq:conc2} is greater. This is caused by the variance of the number of double edges in $\P_*(n,d)$. This difference in variance directly causes the difference in the probability bounds in Theorem~\ref{thm:P-P} and in Theorem~\ref{thm:P-G}.
\smallskip

\noindent {\em Proof of Lemma~\ref{lem:YPairing}.\ } Consider the probability space $\P'$ of $\P^{(i)}(n,d)$ conditioned to the event that the $i$ vertex-disjoint double edges are precisely $\{\{2j-1,2j\}, 1\le j\le i\}$. By symmetry, it suffices to establish the assertion of the lemma in this probability space. Moreover, letting $S=[n]\setminus [2i]$, each perfect matching on $S$ corresponds to exactly $2^i$ perfect matchings in $P\in\P'$ that cover all the double edges in $P$. Hence, it is convenient to define $\Z$ to be the set of matchings in $P$ that saturates all and only the vertices in $S$, and it follows that $Y=2^i Z$ where $Z=|\Z|$. 

Let $X=\{\{2j-1,2j\}, 1\le j\le i\}$.
Let $\G$ be the set of simple graphs on $[n]$ with degree sequence $(\underbrace{d-2,\ldots, d-2}_{2i}, \underbrace{d,\ldots, d}_{n-2i})$ avoiding all the edges in $X$. Then $\G$ corresponds precisely to the set of $d$-regular multi-graphs on $[n]$ in $\P'$. 

As in Theorem~\ref{thm:subgraphenum}, given any simple graph $W$ on $[n]$, let $N(\bfg,W)$ be the number of simple graphs $G$ on $[n]$ whose degree sequence is $\bfg$ and $G\cap W=\emptyset$. 

Let $H$ be a fixed perfect matching on $S$. Then,
\[
\pr(H\subseteq \P')=\frac{N(\bfg, X\cup H)}{N(\bfg',X)},
\]
where
\[
\bfg=(\underbrace{d-2,\ldots,d-2}_{2i},\underbrace{d-1,\ldots,d-1}_{n-2i}),\quad \bfg'=(\underbrace{d-2,\ldots,d-2}_{2i},\underbrace{d,\ldots,d}_{n-2i})
\]
We apply Theorem~\ref{thm:subgraphenum} to estimate $N(\bfg, X\cup H)$ and $N(\bfg',X)$. It is immediate that
\begin{align*}
    m(\bfg)=\frac{n(d-1)}{2}-i, \quad m(\bfg')=\frac{nd}{2}-2i,
\end{align*}
and by the assumption that $i=O(d^2)$,
\begin{align*}
    \lambda(\bfg)=\frac{\sum_{j=1}^n g_j(g_j-1)}{4m(\bfg)}=\frac{1}{2}(d-2)(1+O(d/n)), \quad \lambda(\bfg')=\frac{1}{2}(d-1)(1+O(d/n)),
\end{align*}
and
\begin{align*}
    \mu(\bfg,X\cup H)=\frac{\sum_{jk\in X\cup H} g_jg_k}{2m(\bfg)}=\frac{1}{2}(d-1)(1+O(d/n)), \quad \mu(\bfg',X)=O(d^3/n).
\end{align*}
It follows that
\begin{align*}
 N(\bfg, X\cup H) &= \frac{(n(d-1)-2i)!\exp\Big(-\lambda(\bfg)-\lambda(\bfg)^2-\mu(\bfg,X\cup H)+O(d^3/n)\Big)}{(n(d-1)/2-i)!2^{(n(d-1)/2-i)}(d-2)!^{2i}(d-1)!^{n-2i}}, \\
 N(\bfg', X) &= \frac{(nd-4i)!\exp\Big(-\lambda(\bfg')-\lambda(\bfg')^2-\mu(\bfg',X)+O(d^3/n)\Big)}{(nd/2-2i)!2^{(nd/2-2i)}(d-2)!^{2i}d!^{n-2i}}.
\end{align*}
There are $\frac{(n-2i)!}{(n/2-i)!2^{n/2-i}}$ choices for $H$ as a perfect matching over $S$. Thus,
\begin{align*}
\ex Z=&\frac{(n-2i)!}{(n/2-i)!2^{n/2-i}} \frac{(n(d-1)-2i)!(nd/2-2i)!2^{(nd/2-2i)}(d-2)!^{2i}d!^{n-2i}}{(n(d-1)/2-i)!2^{(n(d-1)/2-i)}(d-2)!^{2i}(d-1)!^{n-2i}(nd-4i)!}\\
&\times \exp\Big(-\lambda(\bfg)-\lambda(\bfg)^2-\mu(\bfg,X\cup H)+\lambda(\bfg')+\lambda(\bfg')^2+\mu(\bfg',X)+O(d^3/n)\Big)\\
=&\frac{(n-2i)!(n(d-1)-2i)!(nd/2-2i)!d^{n-2i}}{(n/2-i)!(n(d-1)/2-i)!(nd-4i)!} \exp(1/4+O(d^3/n)).
\end{align*}
By Stirling formula we know that
\begin{equation}
\frac{(2N)!}{N!}=(1+O(N^{-1}))\sqrt{2}\left(\frac{4N}{e}\right)^N, \label{eq:Stirling}
\end{equation}
which yields that 
\begin{align*}
\ex Z &=\exp(1/4+O(d^3/n))\sqrt{2}\left(\frac{2n-4i}{e}\right)^{n/2-i} \left(\frac{2n(d-1)-4i}{e}\right)^{n(d-1)/2-i} \left(\frac{e}{2nd-8i }\right)^{nd/2-2i} d^{n-2i} \\
&= \exp(1/4+O(d^3/n))\sqrt{2} \frac{(n-2i)^{n/2-i}(n(d-1)-2i)^{n(d-1)/2-i}}{(nd-4i)^{nd/2-2i}}d^{n-2i}.
\end{align*}
Using that $(n-2i)^{n/2-i} = n^{n/2-i} \exp(-i+O(d^4/n))$, $(n(d-1)-2i)^{n(d-1)/2-i}=(n(d-1))^{n(d-1)/2-i}\exp(-i+O(d^4/n))$ and $(nd-4i)^{nd/2-2i}=(nd)^{nd/2-2i}\exp(-2i+O(d^4/n))$, we further simplify the above to obtain that
\[
\ex Z =\exp(1/4+O(d^4/n))\sqrt{2} (d-1)^{n(d-1)/2-i} d^{-n(d-2)/2}.
\]

The computation for $\ex Z^2$ is similar to~\cite{gao2023number}. Fix two perfect matchings $H_1$ and $H_2$ over $S$ such that $|H_1\cap H_2|=k$. By~\cite[Lemma 17]{gao2023number}, the number of choices for $(H_1,H_2)$ is
\[
(1+O(n-2i-2k)^{-1})\frac{(n-2i)!}{2^k k!\sqrt{e\pi(n-2i-2k)/2}}.
\]

Moreover,
\[
\pr(H_1\cup H_2\subseteq \P')=\frac{N(\bfg, X\cup H_1\cup H_2)}{N(\bfg',X)},
\]
where, without loss of generality,
\[
\bfg=(\underbrace{d-2,\ldots,d-2}_{2i}, \underbrace{d-1,\ldots,d-1}_{2k}, \underbrace{d-2,\ldots,d-2}_{n-2i-2k}),\quad \bfg'=(\underbrace{d-2,\ldots,d-2}_{2i},\underbrace{d,\ldots,d}_{n-2i}).
\]
It is immediate that
\begin{align*}
    m(\bfg)=\frac{n(d-2)+2k}{2},
\end{align*}
and assuming that $i=O(d^2)$ and letting $\alpha=\alpha(k):=2k/n$,
\begin{align*}
    \lambda(\bfg)=\frac{(d-2)(d-3)(1-\alpha)+(d-1)(d-2)\alpha}{2(d-2)+2\alpha},
\end{align*}
and
\begin{align*}
    \mu(\bfg,X\cup H_1\cup H_2)=\frac{(1-\alpha)(d-2)^2+(d-1)^2\alpha/2}{d-2+\alpha}+O(d^3/n).
\end{align*}
Hence,
\[
N(\bfg,X\cup H_1\cup H_2) = \frac{(n(d-2)+2k)!\exp\Big(-\lambda(\bfg)-\lambda(\bfg)^2-\mu(\bfg,X\cup H_1\cup H_2)\Big)}{(n(d-2)/2+k)!2^{n(d-2)/2+k}(d-2)!^{n-2k}(d-1)!^{2k}}.
\]
It follows that $\pr(H_1\cup H_2\subseteq \P')=\rho_2(n,d,\alpha(k))$ where
\begin{align*}
\rho_2(n,d,\alpha(k))    =&\frac{(n(d-2)+2k)!(nd/2-2i)!2^{(nd/2-2i)}(d-2)!^{2i}d!^{n-2i}}{(n(d-2)/2+k)!2^{n(d-2)/2+k}(d-2)!^{n-2k}(d-1)!^{2k} (nd-4i)!}\\
    &\times \exp\left(\frac{(-d^2+5)\alpha^2+(-2d^2+14d-20)\alpha+4(d-2)^2}{4(d-2+\alpha)^2}\right)\\
    =&\frac{(n(d-2)+2k)!(nd/2-2i)!}{(n(d-2)/2+k)! (nd-4i)!} 2^{n-2i-k} d^{n-2i}(d-1)^{n-2i-2k} \\
    &\times \exp\left(\frac{(-d^2+5)\alpha^2+(-2d^2+14d-20)\alpha+4(d-2)^2}{4(d-2+\alpha)^2} +O(d^3/n)\right)\\
    =&  \left(\frac{2n(d-2)+4k}{e}\right)^{n(d-2)/2+k} \left(\frac{e}{2nd-8i}\right)^{nd/2-2i} 2^{n-2i-k} d^{n-2i}(d-1)^{n-2i-2k} \\
    &\times \exp\left(\frac{(-d^2+5)\alpha^2+(-2d^2+14d-20)\alpha+4(d-2)^2}{4(d-2+\alpha)^2} +O(d^3/n)\right)
\end{align*}

Therefore,

\begin{align*}
\ex Z^2 &=\ex Z+\sum_{k=0}^{n/2-i-1}(1+O(n-2i-2k)^{-1})\frac{(n-2i)!}{2^k k!\sqrt{e\pi(n-2i-2k)/2}} \rho_2(n,d,\alpha(k))\\
&=\ex Z+\sum_{k=0}^{n/2-i-1}(1+O(n-2i-2k)^{-1}) \sqrt{\frac{2}{e\pi}}(n-2i)!\varphi(k),
\end{align*}
where
\[
\varphi(k)=\frac{\rho_2(n,d,\alpha(k))}{2^k k!\sqrt{n-2i-2k}}.
\]
Following exactly the same approach as in~\cite[Lemma 18]{gao2023number}, we found that
\begin{equation}
\ex Z^2=\left(1+O\left(\sqrt{\frac{d}{n}}\log^3 n+\frac{d^3}{n}\right)\right) \sqrt{\frac{2\pi}{\bar\delta}} \frac{(n-2i)!}{2^{\bar k} \bar k!\sqrt{e\pi(n-2i-2\bar k)/2}} \rho_2(n,d,\alpha(\bar k)),\label{eq:2nd}
\end{equation}
where $\bar \alpha=1/d$, $\bar k=\lfloor\bar\alpha n/2\rfloor$, and $\bar\delta=\frac{2d}{n}\frac{d(d-2)}{(d-1)^2}$. We briefly explain why all the calculations in~\cite[Lemma 18]{gao2023number} are derived here in exactly the same way, with exactly the same $\bar\alpha,\bar k$ and $\bar \delta$. 
The key part of this proof is estimating the ratio $\varphi(k)/\varphi(k-1)$, determining the value of $\bar k$
around which the ratio is approximately 1, and estimating the sum above over $k$ by an integral. By the definition of $\varphi$ above, it is straightforward to see that $\varphi(k)/\varphi(k-1)$ agrees exactly with~\cite[eq.~(20)]{gao2023number}, which then leads to~\eqref{eq:2nd} following exactly the same approach in~\cite{gao2023number}.
Substituting the values of $\bar\delta$ and $\bar k$ we obtain that
\begin{align*}
    \ex Z^2 = & 
    \sqrt{\frac{2(d-1)}{ed(d-2)}} \frac{(n-2i)!}{2^{n/2d} (n/2d)!}\left(\frac{2n(d-2)+2n/d}{e}\right)^{n(d-2)/2+n/2d} \left(\frac{e}{2nd-8i}\right)^{nd/2-2i} \\
    &\times 2^{n-2i-n/2d} d^{n-2i}(d-1)^{n-2i-n/d}\cdot \exp\left(\frac{4d^2-10d+5}{4(d-1)^2} +O(\sqrt{d/n}\log^3 n+ d^3/n)\right).
\end{align*}
Applying Stirling's formula, we obtain
\begin{align*}
    \ex Z^2 = & 
    2\sqrt{\frac{(d-1)}{e(d-2)}} \frac{((n-2i)/e)^{n-2i}}{2^{n/2d} (n/2ed)^{n/2d}}\left(\frac{2n(d-2)+2n/d}{e}\right)^{n(d-2)/2+n/2d} \left(\frac{e}{2nd-8i}\right)^{nd/2-2i}  \\
    &\times 2^{n-2i-n/2d} d^{n-2i}(d-1)^{n-2i-n/d} \cdot\exp\left(\frac{4d^2-10d+5}{4(d-1)^2} +O(\sqrt{d/n}\log^3 n+ d^3/n)\right).
\end{align*}
Again, with $(n-2i)^{n-2i}=n^{n-2i}\exp(-2i+O(d^4/n))$, $(2nd-8i)^{nd/2-2i}=(2nd)^{nd/2-2i}\exp(-2i)$ we can further simplify the above and obtain that
\begin{align*}
    \ex Z^2 = & 2\sqrt{\frac{(d-1)}{e(d-2)}} 2^{-n+n/2d+2i} d^{n/2d+2i-nd/2}(d-2+1/d)^{(n/2)(d-2+1/d)} \\
    &\times 2^{n-2i-n/2d} d^{n-2i}(d-1)^{n-2i-n/d}\cdot\exp\left(\frac{4d^2-10d+5}{4(d-1)^2} +O(\sqrt{d/n}\log^3 n+ d^3/n)\right)\\
    =&  2\sqrt{\frac{(d-1)}{e(d-2)}}  d^{-nd+2n}(d-1)^{nd-n-2i} \\
    &\times   \exp\left(\frac{4d^2-10d+5}{4(d-1)^2} +O(\sqrt{d/n}\log^3 n+ d^3/n)\right).
\end{align*}

Finally, with $\xi=\sqrt{d/n}\log^3 n+ d^4/n$
\begin{align*}
    \frac{\ex Z^2}{(\ex Z)^2} &= (1+O(\xi)) \sqrt{\frac{(d-1)}{e(d-2)}} \exp\left(\frac{4d^2-10d+5}{4(d-1)^2} -1/2 \right)= 1+\frac{1}{6d^3}+O(d^{-4}+\xi),
\end{align*}
which implies the assertion of the lemma by recalling that $Y=2^i Z$. \qed

\subsection{Proof of Theorem~\ref{thm:P-G}}

The following lemma follows by a standard switching argument, which we skip and leave as an easy exercise.

\begin{lemma}\label{lem:TypicalPairing} 
\begin{enumerate}[(a)]
\item For every $i=O(d^2)$,   \[
    \pr_{\P_*(n,d)}\Big(P\in \P^{(i)}(n,d)\mid\ \text{$P$ has exactly $i$ double edges}\Big)=1-O(d^4/n).
    \]
    \item $\pr_{\P_*(n,d)}\Big(\cup_{i=0}^{d^2} \P^{(i)}(n,d)\Big)=1-O(d^4/n)$.    
\end{enumerate}
\end{lemma}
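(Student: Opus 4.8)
The plan is to prove both parts by a standard switching argument, which I would state but not grind through. For part (a), I fix $i = O(d^2)$ and let $\P^{(i),\text{bad}}(n,d)$ denote the set of pairings with exactly $i$ double edges but which either have a triple (or higher-multiplicity) edge, or have a pair of double edges that share a vertex. I want to show this bad set is an $O(d^4/n)$-fraction of all pairings with exactly $i$ double edges. To do so I would split the bad event into the "triple-edge" event $T$ and the "two adjacent double edges" event $A$. For each bad configuration I define a switching that removes the offending local structure: for a triple edge between $u,v$, pick two of the three pairs, say the pairs $\{p_1,q_1\}$ and $\{p_2,q_2\}$ with $p_1,p_2\in u$ and $q_1,q_2\in v$, and swap with a random pair $\{x,y\}$ (with $x,y$ in generic bins distinct from $u,v$ and from each other's neighbours) to obtain $\{p_1,x\},\{q_1,y\}$ while keeping the remaining pair of the triple; similarly for the adjacent-double-edge case one switches one pair of each double edge out to generic locations. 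Counting forward switchings: from a bad pairing there are $\Theta(1)$ choices of which structure to destroy (bounded since $i=O(d^2)$ and triple edges are rare, but more carefully $O(i)=O(d^2)$ choices of which double edge is involved) times $\Theta((dn)^2)$ choices of target pair $\{x,y\}$, minus $O(dn\cdot d)$ forbidden choices; counting backward switchings: from a good pairing one must choose the $u,v$ (two vertices that already share a double edge, of which there are exactly $i$) and then two pairs incident to the new edges, giving $O(i\cdot d^2)=O(d^4)$ choices. The ratio of bad to good is therefore $O(d^4/(dn)^2)\cdot(\text{number of ways to choose things})$; assembling carefully gives $|\P^{(i),\text{bad}}|/|\{\text{exactly }i\text{ double edges}\}| = O(d^4/n)$, which is the claim.

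For part (b), I would combine part (a) with a tail bound on the number of double edges. By Lemma~\ref{lem:covariance} we have $\ex X = \tfrac{(d-1)^2}{4}(1+O(d^3/n))$ and $\var X = O(d^2)$, so by Chebyshev (or the stronger Lemma~\ref{lem:concentrationXW}(a), which gives an even better bound $\pr(|X - \ex X| > C\sqrt{\log d\, \ex X}) < 1/d^2$), the number $X$ of double edges satisfies $X \le d^2$ with probability $1 - O(d^4/n)$ (in fact with probability $1 - o(1)$ for $d$ large, since $\ex X \le d^2/4$ and the fluctuations are only of order $d\sqrt{\log d}$). Then I write
\[
\pr_{\P_*(n,d)}\Big(\textstyle\bigcup_{i=0}^{d^2}\P^{(i)}(n,d)\Big) \ge \sum_{i=0}^{d^2} \pr(P\text{ has exactly }i\text{ double edges})\cdot\big(1 - O(d^4/n)\big) - \pr(X > d^2),
\]
and since $\sum_{i=0}^{d^2}\pr(X=i) = 1 - \pr(X>d^2) = 1 - O(d^4/n)$, the whole expression is $1 - O(d^4/n)$ as claimed.

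The main obstacle I anticipate is getting the switching count in part (a) genuinely uniform in $i$ over the whole range $i = O(d^2)$: one must make sure that the "generic position" conditions excluded in the forward and backward switching counts (avoiding the $2i$ special vertices, avoiding creating new multiple edges, avoiding loops) contribute only lower-order corrections, which is where the hypothesis $i = O(d^2)$ together with $d = o(n^{1/7})$ (hence $d^4 = o(n)$, wait — we only need $d^4/n \to$ something, the statement keeps it as an error term) is used. Since the lemma is stated with the error term $O(d^4/n)$ rather than $o(1)$, this is comfortable: the switching ratios are allowed to be as large as $O(d^4/n)$, so only the leading-order switching counts matter and all the delicate "avoidance" corrections are absorbed. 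This is why the paper is content to "skip and leave as an easy exercise."
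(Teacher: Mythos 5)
The paper gives no proof here (it is explicitly left as ``a standard switching argument''), and your overall plan --- switchings for (a), then (a) plus a tail bound on the number $X$ of double edges for (b) --- is indeed the intended route. However, there are two genuine gaps. First, in part (a) your switchings do not stay inside the conditioning class. Turning a triple edge into a double, or breaking one pair out of each of two adjacent double edges, changes the number of double edges from $i$ to $i+1$ or $i-2$, so the forward/backward counts compare the bad pairings with exactly $i$ double edges to pairings with a \emph{different} number of double edges; to close the argument you must either design switchings that preserve the count $X=i$ (e.g.\ reduce a triple all the way to a single edge using two external pairs), or additionally bound the ratios $\lvert\{X=i\pm1\}\rvert/\lvert\{X=i\}\rvert$ (which are $O(1)$ resp.\ $O(d^2/i)$ for $i=O(d^2)$, by a further switching), and this step is nowhere in your sketch. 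Relatedly, the operation you describe (three old pairs replaced by two new ones) leaves points unmatched, and your counts are mutually inconsistent: a forward count of $\Theta((dn)^2)$ (two external pairs) must be set against a backward count of order $nd^6$, not $O(i\,d^2)=O(d^4)$; pairing $\Theta((dn)^2)$ with $O(d^4)$ would give the impossible bound $O(d^2/n^2)$. A consistent accounting (one external pair, forward $\Theta(dn)$, backward $O(i d^3)$, times the class-size ratio $O(i/d^2)$) does give $O(i^2/n)=O(d^4/n)$, dominated by the adjacent-double-edge event, so the conclusion is right but the bookkeeping as written is not.

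Second, and more seriously, part (b) does not follow from the tools you invoke. You need $\pr(X>d^2)=O(d^4/n)$, but Chebyshev with $\E X\sim d^2/4$ and $\Var X=O(d^2)$ only gives $\pr(X>d^2)=O(d^{-2})$, and Lemma~\ref{lem:concentrationXW}(a) likewise only gives error $1/d^2$; in the relevant regime $d=o(n^{1/7})$ one has $d^{-2}\gg d^4/n$, so these bounds are far too weak, and ``$1-o(1)$'' is not the same as ``$1-O(d^4/n)$''. What is actually needed is a Poisson-type upper-tail estimate for $X$, e.g.\ the switching inequality $\pr(X=k+1)/\pr(X=k)=O(d^2/k)$ summed over $k\ge d^2$, which yields $\pr(X>d^2)=e^{-\Omega(d^2)}$; note that even this is $O(d^4/n)$ only when $d^2\gtrsim\log n$, a restriction worth flagging (for slowly growing $d$ the tail term should simply be carried along, and it is harmless in the application since $e^{-\Omega(d^2)}\ll\sqrt{\log d}/d^{1.5}$). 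As written, the justification of the tail bound is the one step of the lemma that is genuinely missing rather than merely sketched.
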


\begin{proof}[Proof of Theorem~\ref{thm:P-G}]

We apply~\cite[Corollary 2.2]{isaev2025sprinkling}. Let $D$ be the bipartite graph on $(\G,\M)$ where $\G$ is the set of graphs in the probability  space of $\G(n,d)$ and $\M$ is the set of $(d+1)$-regular multigraphs on $[n]$ containing exactly $i$ vertex-disjoint double edges and no any other types of multiple edges. A graph $G\in \G$ is adjacent to a multigraph $M\in \M$ in $D$ if $M\setminus G$ is a perfect matching on $[n]$ that covers every double edge of $M$. Let $\eps=C d^4/n +1/d$ for some sufficiently large constant $C>0$. Let $S_{\text{good}}$ and $T_{\text{good}}$ be defined as in Corollary~\ref{cor:coupling2}. 

By Lemmas~\ref{lem:YPairing} and~\ref{lem:TypicalPairing} and the same treatment as in Lemmas~\ref{lem:covariance}--\ref{lem:concentrationXW} and in the proof of Theorem~\ref{thm:P-P}, 
\[
|S_{\text{good}}|\ge (1-\delta)|S|,\quad |T_{\text{good}}|\ge (1-\delta)|T|
\]
    with $\delta=c\sqrt{\log d}/d^{1.5}$ for some constant $c>0$. 
By Corollary~\ref{cor:coupling2}, there is a coupling $\pi_i$ of $(G,M)$ where $G$ is uniform in $\G$ and $M$ is uniform in $\M$ and $\pr(G\subseteq M)=1-O(\sqrt{\log d}/d^{1.5}+d^4/n)$.

Finally, construct a coupling $(G,M)$ where $G\sim \G(n,d)$ and $M=G[P]$ where $P\in \P_*(n,d+1)$ as follows. First, sample $P\in \P_*(n,d+1)$. If $P\notin \cup_{i=0}^{(d+1)^2} \P^{(i)}(n,d+1)$, sample $G~\sim \G(n,d)$ independently. Otherwise, construct $G$ according to $\pi_i$ given $M=G[P]$ where $i$ denotes the number of double edges in $M$. Obviously, $G$ and $M$ have their desired marginal distribution. Moreover, the probability that $G\nsubseteq M$ is $O(\sqrt{\log d}/d^{1.5}+d^4/n)$.
\end{proof}

\section{Proofs of Theorems~\ref{thm:P-P2} and~\ref{thm:P-M} and Corollary~\ref{thm:G-M}}

By the remark after Lemma~\ref{lem:YPairing}, in order to improve the probability bound in Theorem~\ref{thm:P-P}, it is necessary to condition on the number of double edges in $\P_*(n,d)$. Thus, we consider $\P^{(i)}(n,d)$ again. In this section, we show concentration of the number of perfect matchings, and the number of perfect matchings that cover a specific number of double edges.

\subsection{Concentration of the number of perfect matchings}

Using basically the same but more delicate arguments as in the proof of Lemma~\ref{lem:YPairing}, we may show the following.

\begin{lemma}\label{lem2:YPairing} Let $i=O(d^2)$, $0\le a\le i$. Let $Y$ be the number of perfect matchings of $\P^{(i)}(n,d)$, and let Let $Y_a$ be the number of perfect matchings $M$ of $P\sim\P^{(i)}(n,d)$ such that $P-M$ has $i-a$ double edges.
Then,
\[
\ex Y^2=\left(1+\frac{1}{6d^3}+O\left(d^{-4}+\frac{d^3}{n}+\sqrt{\frac{d}{n}}\log^3 n\right)\right)(\ex Y)^2,
\]
and
\[
\ex Y_a^2=\left(1+\frac{1}{6d^3}+O\left(d^{-4}+\frac{d^3}{n}+\sqrt{\frac{d}{n}}\log^3 n\right)\right)(\ex Y_a)^2,
\]

\end{lemma}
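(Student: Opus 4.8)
The plan is to reduce both claims to the already-proved computation of $\ex Z^2$ in Lemma~\ref{lem:YPairing} (where $Z$ counts matchings on $S=[n]\setminus[2i]$ in the symmetric conditioned space $\P'$). For the first statement, note that $Y$ here is exactly the $Y=2^iZ$ of Lemma~\ref{lem:YPairing}: the perfect matchings of $P\in\P^{(i)}(n,d)$ are precisely the $2^i$ lifts of matchings on $S$ together with choices of which point in each double-edge bin is used; hence $\ex Y^2/(\ex Y)^2=\ex Z^2/(\ex Z)^2$ and the estimate is literally~\eqref{eq:conc1}. So only the second statement, about $Y_a$, requires new (though parallel) work.

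For $Y_a$, I would again pass to the symmetric space $\P'$ with double edges fixed at $\{\{2j-1,2j\}:1\le j\le i\}$ and write $Y_a=\sum_{A}Z_A$ where $A$ ranges over the $\binom{i}{a}$ size-$a$ subsets of these double edges, and $Z_A$ counts matchings $M$ of $P$ that ``destroy'' exactly the double edges indexed by $A$ (i.e.\ $M$ uses exactly one point from each bin in $A\cup S$, saturating $S$ together with, for each $\{2j-1,2j\}\in A$, exactly one of its two vertices, and leaving the other a loop-free singleton to be matched within its own bin — more precisely $M$ covers the double edge on $A^c$ and splits the one on $A$). By symmetry $\ex Z_A$ depends only on $|A|=a$, and the same for the second moment structure, so $\ex Y_a=\binom{i}{a}\ex Z_{[a]}$ and $\ex Y_a^2=\sum_{A,A'}\ex(Z_AZ_{A'})$. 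The first-moment and second-moment calculations for $Z_A$ are of exactly the same type as those for $Z$ in Lemma~\ref{lem:YPairing}: one counts, for a fixed target matching (or pair of matchings with $|H_1\cap H_2|=k$) on the relevant vertex set, the number of realizations as a subgraph of the associated bipartite configuration and multiplies by $N(\bfg,X\cup H)/N(\bfg',X)$ estimated via Theorem~\ref{thm:subgraphenum}, the only change being that $2a$ of the degree-$(d-2)$ coordinates are replaced by degree-$(d-1)$ coordinates (a vertex of a split double edge loses one pair-slot), which affects $\lambda(\bfg),\mu(\bfg,\cdot)$ and the Stirling bookkeeping only through lower-order $O(a\,d/n)=O(d^3/n)$ terms since $a\le i=O(d^2)$. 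One then runs the Laplace/ratio analysis of~\cite[Lemma 18]{gao2023number} exactly as in Lemma~\ref{lem:YPairing}; crucially the dominant exponential factors, the location $\bar k=\lfloor n/(2d)\rfloor$ of the peak, and $\bar\delta$ are unchanged because the $a$-dependent corrections are absorbed in the error term, so $\ex Z_A^2/(\ex Z_A)^2=1+\tfrac{1}{6d^3}+O(d^{-4}+d^3/n+\sqrt{d/n}\log^3n)$ for each fixed $A$.

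It remains to control the cross terms $\ex(Z_AZ_{A'})$ for $A\ne A'$ and to assemble $\ex Y_a^2$. Writing $Y_a^2=\sum_A Z_A^2+\sum_{A\ne A'}Z_AZ_{A'}$, the diagonal contributes $\binom{i}{a}\ex Z_{[a]}^2=(1+O(\cdots))\binom{i}{a}(\ex Z_{[a]})^2$; for the off-diagonal I would show $\ex(Z_AZ_{A'})=(1+O(d^{-4}+d^3/n+\sqrt{d/n}\log^3n))(\ex Z_A)(\ex Z_{A'})$ by the same joint-moment computation (now the two matchings live on slightly different vertex sets determined by $A$ and $A'$, but the overlap structure and the degree-sequence perturbations are again $O(d^3/n)$-order), so that $\ex Y_a^2=(1+O(\cdots))\big(\binom{i}{a}\ex Z_{[a]}\big)^2=(1+O(\cdots))(\ex Y_a)^2$ after noting $\binom{i}{a}^2\ge\sum_A\sum_{A'}1$ trivially and the diagonal is a $1/\binom{i}{a}$ fraction, which is harmless inside the stated error. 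The main obstacle I anticipate is purely bookkeeping: verifying that every place where $a$ (equivalently, the number of degree-$(d-1)$ versus degree-$(d-2)$ coordinates) enters $\lambda,\mu$, the Stirling ratios, and the saddle-point constants, the resulting change is genuinely $O(d^3/n)$ or smaller uniformly in $0\le a\le i=O(d^2)$ — i.e.\ that no $a$-dependence survives at order $1/d^3$. This is plausible because splitting a double edge is a local, degree-$\pm1$ perturbation at $O(d^2)$ vertices out of $n$, exactly the regime the error terms in Theorem~\ref{thm:subgraphenum} and in~\cite[Lemma 18]{gao2023number} are designed to absorb, but it must be checked line by line rather than quoted.
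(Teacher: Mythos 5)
Your reduction of the first statement fails at the very first step: in this lemma $Y$ counts \emph{all} perfect matchings of $P\sim\P^{(i)}(n,d)$, not only those covering every double edge, so $Y$ is not the $2^iZ$ of Lemma~\ref{lem:YPairing}. A perfect matching may saturate the endpoints of a double edge through non-double-edge pairs, and indeed the paper's proof writes $Y=\sum_{j=0}^i Z_j$ (with $Z_j$ the matchings using exactly $j$ pairs from double edges), obtaining $\ex Y=(1+O(d^4/n))\,g(n,d)\bigl(\tfrac{d^2-2d+2}{(d-1)^2}\bigr)^i$ as in \eqref{eq:FM}, which differs from $2^i\ex Z$ already at first order in the exponent of $i$. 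Consequently the ratio $\ex Y^2/(\ex Y)^2$ is \emph{not} ``literally \eqref{eq:conc1}''; it must be recomputed over pairs of matchings classified by how each interacts with the double edges (the paper's conditions (P1)--(P5), with the sum over $j,\ell,h,k,\eta$ and a Laplace analysis, followed by pinning the constant through the $i=0$ case of \cite[Theorem 10]{gao2023number}). Your proposal supplies none of this for the first claim, so that part is unproven. (Relatedly, your description of $Z_A$ — ``leaving the other a loop-free singleton to be matched within its own bin'' — conflates perfect matchings of the pairing with the partial matchings on $S$ used in Lemma~\ref{lem:YPairing}; in a perfect matching both endpoints of an undestroyed double edge are still saturated, via other pairs.)

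For $Y_a$ the decomposition $Y_a=\sum_{|A|=a}Z_A$ is fine and close in spirit to the paper (which instead fixes $\ell=h=a-j$ in (P1)--(P5)), but your second-moment assembly is inconsistent with the target. The $1+\tfrac{1}{6d^3}$ correction is an intrinsic pairwise correlation of two perfect matchings inside the same random pairing (it is present already at $i=0$), so it appears in \emph{every} term $\ex(Z_AZ_{A'})$, off-diagonal included; your claim $\ex(Z_AZ_{A'})=(1+O(d^{-4}+d^3/n+\sqrt{d/n}\log^3 n))\ex Z_A\,\ex Z_{A'}$ for $A\neq A'$ is false at exactly the order that matters, and if it were true the lemma's stated $\tfrac{1}{6d^3}$ term would itself be wrong whenever $\binom{i}{a}$ is large. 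As written, your bookkeeping yields $\ex Y_a^2/(\ex Y_a)^2=1+\binom{i}{a}^{-1}\tfrac{1}{6d^3}+O(\cdots)$, not the asserted estimate. What is needed (and what the paper does) is the joint computation over pairs $(H_1,H_2)$ with the appropriate overlap parameters, showing the $A$- and $a$-dependence factors out exactly — e.g.\ $\ex Y_a^2=(1+O(\xi))f_i(n,d)\binom{i}{a}^2\bigl(\tfrac{2(d-1)}{(d-2)^2}\bigr)^{2a}$ — and then identifying $f_i(n,d)=(1+\tfrac1{6d^3}+O(d^{-4}+\xi))\,g_i(n,d)^2$ by comparison with the already-known case $a=i$ (Lemma~\ref{lem:YPairing}); this constant-pinning step is absent from your plan and cannot be replaced by the diagonal/off-diagonal argument you sketch.
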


\proof Consider the probability space $\P'$ as in the proof of Lemma~\ref{lem:YPairing}. Let $0\le j\le i$, and let $Z_j$ be the number of perfect matchings that uses exactly $j$ pairs contained in the $i$ double edges. There are $\binom{i}{j} 2^j$ ways to fix these $j$ pairs. Let $S$ be the set of remaining $n-2j$ vertices. 
\begin{claim}
    The number of perfect matchings on $S$ avoiding any pairs contained in a double edge is
    \[
    (1+O(d^2/n)) \frac{(n-2j)!}{(n/2-j)! 2^{n/2-j}}.
    \]
\end{claim}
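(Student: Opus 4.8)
The statement to prove is a counting claim: the number of perfect matchings on a set $S$ of $n-2j$ vertices that avoid all pairs contained in any of the $i$ double edges is $(1+O(d^2/n))\frac{(n-2j)!}{(n/2-j)!2^{n/2-j}}$. Note that the total number of perfect matchings on $S$ (with no restriction) is exactly $\frac{(n-2j)!}{(n/2-j)!2^{n/2-j}}$, so the claim is really asserting that forbidding a bounded ($O(d^2)$) number of specific edges changes the count by only a $(1+O(d^2/n))$ factor. So the plan is an inclusion--exclusion / direct estimate on how many perfect matchings contain at least one forbidden edge.

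\textbf{Approach.} Let $F$ be the set of forbidden edges --- these are the (at most) $i-j$ edges of the form $\{2\ell-1,2\ell\}$ on vertices still in $S$ --- so $|F|\le i = O(d^2)$ and the edges of $F$ are pairwise vertex-disjoint. Let $P_m$ denote the number of perfect matchings of $S$. For a subset $T\subseteq F$ of $t$ pairwise-disjoint edges, the number of perfect matchings of $S$ containing all of $T$ is exactly $\frac{(n-2j-2t)!}{(n/2-j-t)!\,2^{n/2-j-t}}$, since we fix those $t$ edges and match the rest freely. By inclusion--exclusion, the number of perfect matchings avoiding all edges in $F$ is
\[
N = \sum_{t=0}^{|F|} (-1)^t \binom{|F|}{t} \frac{(n-2j-2t)!}{(n/2-j-t)!\,2^{n/2-j-t}}.
\]
First I would factor out the $t=0$ term $P_m = \frac{(n-2j)!}{(n/2-j)!2^{n/2-j}}$ and show that the ratio $\frac{(n-2j-2t)!/((n/2-j-t)!2^{n/2-j-t})}{(n-2j)!/((n/2-j)!2^{n/2-j})}$ equals $\prod_{\ell=0}^{t-1}\frac{1}{n-2j-2\ell-1} = \left(1+O(t/n)\right)(n-2j)^{-t}$, using that $j\le i = O(d^2) = o(n)$ so all these factors are $(1+o(1))/n$. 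Hence $N = P_m \sum_{t=0}^{|F|}(-1)^t\binom{|F|}{t}(1+O(t/n))^t (n-2j-1)^{-t}\cdots$, and since $|F| = O(d^2)$ the $t=0$ term dominates: the $t=1$ term contributes $O(|F|/n) = O(d^2/n)$, and the higher-order terms are even smaller (bounded by $\binom{|F|}{t}n^{-t}\le (|F|/n)^t$, summing to $O(d^2/n)$ as a geometric-type tail). This gives $N = (1+O(d^2/n))P_m$, which is exactly the claim.

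\textbf{Key steps in order.} (1) Identify the forbidden edge set $F$ and note $|F|\le i=O(d^2)$ and disjointness. (2) Write the exact inclusion--exclusion formula for $N$ in terms of the known closed form for the number of perfect matchings containing a fixed disjoint set of edges. (3) Compute the ratio of the $t$-th term to the $0$-th term as an explicit product and bound it by $(1+O(t/n))(n-2j)^{-t}$, using $j = o(n)$. (4) Bound the alternating sum: the $t\ge 1$ terms together contribute $O(|F|/n) = O(d^2/n)$ relative to the main term, e.g. by crudely bounding $\binom{|F|}{t}(n-2j)^{-t}\le (|F|/(n-2j))^t$ and summing the geometric series (valid since $|F| = O(d^2) = o(n)$). (5) Conclude $N = (1+O(d^2/n))\frac{(n-2j)!}{(n/2-j)!2^{n/2-j}}$.

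\textbf{Main obstacle.} This is essentially routine; the only thing requiring a little care is keeping the error terms honest when $t$ ranges up to $|F| = O(d^2)$ --- one must make sure that $(|F|/n)^t$ really is summable and dominated by the $t=1$ term, which needs $|F| = o(n)$, guaranteed here by $i = O(d^2)$ together with the standing assumption $d = o(n^{1/7}/\log n)$ (so $d^2 = o(n)$ comfortably). An alternative to inclusion--exclusion, if one prefers to avoid the alternating sum, is a union bound: the fraction of perfect matchings of $S$ using a \emph{given} edge $e\in F$ is exactly $1/(n-2j-1)$, so by the union bound the fraction using \emph{some} edge of $F$ is at most $|F|/(n-2j-1) = O(d^2/n)$, which immediately yields the lower bound $N\ge (1-O(d^2/n))P_m$; the matching upper bound $N\le P_m$ is trivial. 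Either route works; I would present the union-bound version for brevity.
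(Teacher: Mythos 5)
Your proposal is correct. The paper states this claim without proof, treating it as a routine counting estimate, and your argument supplies exactly the verification intended: the forbidden edges are the at most $i-j\le i=O(d^2)$ remaining (pairwise vertex-disjoint) double-edge pairs inside $S$, the proportion of perfect matchings of $S$ containing any one fixed edge is exactly $1/(n-2j-1)=O(1/n)$, so the union bound gives the lower bound $(1-O(d^2/n))\frac{(n-2j)!}{(n/2-j)!\,2^{n/2-j}}$, and the trivial upper bound finishes the claim; your inclusion--exclusion variant is equally valid since $d^2=o(n)$ makes the alternating tail dominated by the $t=1$ term.
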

Hence, the total number of perfect matchings on $[n]$ that uses exactly $j$ pairs in the double edges is
\begin{equation}
(1+O(d^2/n)) \frac{(n-2j)!}{(n/2-j)! 2^{n/2-j}}\binom{i}{j} 2^j. \label{eq:Hnumber}
\end{equation}
Fix such a perfect matching $H$. Without loss of generality, we may assume that the $j$ double edges covered by $H$ are $\{\{2h-1,2h\},1\le h\le j\}$ and thus $S=[n]\setminus [2j]$. 

As before, let $X=\{\{2j-1,2j\}, 1\le j\le i\}$.
Let $\G$ be the set of simple graphs on $[n]$ with degree sequence $(\underbrace{d-2,\ldots, d-2}_{2i}, \underbrace{d,\ldots, d}_{n-2i})$ avoiding all the edges in $X$. Then $\G$ corresponds precisely to the set of $d$-regular multi-graphs on $[n]$ in $\P'$. 

As in Theorem~\ref{thm:subgraphenum}, given any simple graph $W$ on $[n]$, let $N(\bfg,W)$ be the number of simple graphs $G$ on $[n]$ whose degree sequence is $\bfg$ and $G\cap W=\emptyset$. 
 Then,
\[
\pr(H\subseteq \P')=\frac{N(\bfg, X\cup H)}{N(\bfg',X)},
\]
where
\[
\bfg=(\underbrace{d-2,\ldots,d-2}_{2j},\underbrace{d-3,\ldots,d-3}_{2i-2j},\underbrace{d-1,\ldots,d-1}_{n-2i}),\quad \bfg'=(\underbrace{d-2,\ldots,d-2}_{2i},\underbrace{d,\ldots,d}_{n-2i})
\]
We apply Theorem~\ref{thm:subgraphenum} to estimate $N(\bfg, X\cup H)$ and $N(\bfg',X)$. It is immediate that
\begin{align*}
    m(\bfg)=\frac{n(d-1)}{2}-2i+j, \quad m(\bfg')=\frac{nd}{2}-2i,
\end{align*}
and by the assumption that $i=O(d^2)$,
\begin{align*}
    \lambda(\bfg)=\frac{\sum_{j=1}^n g_j(g_j-1)}{4m(\bfg)}=\frac{1}{2}(d-2)(1+O(d/n)), \quad \lambda(\bfg')=\frac{1}{2}(d-1)(1+O(d/n)),
\end{align*}
and
\begin{align*}
    \mu(\bfg,X\cup H)=\frac{\sum_{jk\in X\cup H} g_jg_k}{2m(\bfg)}=\frac{1}{2}(d-1)(1+O(d/n)), \quad \mu(\bfg',X)=O(d^3/n).
\end{align*}
It follows that
\begin{align*}
 N(\bfg, X\cup H) &= \frac{(n(d-1)-4i+2j)!\exp\Big(-\lambda(\bfg)-\lambda(\bfg)^2-\mu(\bfg,X\cup H)+O(d^3/n)\Big)}{(n(d-1)/2-2i+j)!2^{(n(d-1)/2-2i+j)}(d-2)!^{2j}(d-3)!^{2i-2j}(d-1)!^{n-2i}}, \\
 N(\bfg', X) &= \frac{(nd-4i)!\exp\Big(-\lambda(\bfg')-\lambda(\bfg')^2-\mu(\bfg',X)+O(d^3/n)\Big)}{(nd/2-2i)!2^{(nd/2-2i)}(d-2)!^{2i}d!^{n-2i}}.
\end{align*}
Combining~\eqn{eq:Hnumber} we obtain
\begin{align*}
\ex Z_j=&  \frac{(n-2j)!}{(n/2-j)! 2^{n/2-j}} \binom{i}{j} 2^j \\
&\times\frac{(n(d-1)-4i+2j)!(nd/2-2i)!2^{(nd/2-2i)}(d-2)!^{2i}d!^{n-2i}}{(n(d-1)/2-2i+j)!2^{(n(d-1)/2-2i+j)}(d-2)!^{2j}(d-3)!^{2i-2j}(d-1)!^{n-2i}(nd-4i)!}\\
&\times \exp\Big(-\lambda(\bfg)-\lambda(\bfg)^2-\mu(\bfg,X\cup H)+\lambda(\bfg')+\lambda(\bfg')^2+\mu(\bfg',X)+O(d^3/n)\Big)\\
=&\binom{i}{j}\frac{(n-2j)!(n(d-1)-4i+2j)!(nd/2-2i)!d^{n-2i}(d-2)^{2i-2j} 2^j}{(n/2-j)!(n(d-1)/2-2i+j)!(nd-4i)!} \exp(1/4+O(d^3/n)).
\end{align*}
By Stirling formula we know that
\begin{equation}
\frac{(2N)!}{N!}=(1+O(N^{-1}))\sqrt{2}\left(\frac{4N}{e}\right)^N, \label{eq:Stirling}
\end{equation}
which yields that 
\begin{align*}
\ex Z_j &=\exp(1/4+O(d^3/n))\sqrt{2}\left(\frac{2n-4j}{e}\right)^{n/2-j} \left(\frac{2n(d-1)-8i+4j}{e}\right)^{n(d-1)/2-2i+j} \left(\frac{e}{2nd-8i }\right)^{nd/2-2i}\\
&\hspace{1cm}\times \binom{i}{j} d^{n-2i} (d-2)^{2i-2j} 2^j \\
&= \exp(1/4+O(d^3/n))\sqrt{2} \frac{(n-2j)^{n/2-j}(n(d-1)-4i+2j)^{n(d-1)/2-2i+j}}{(nd-4i)^{nd/2-2i}} \binom{i}{j} d^{n-2i} (d-2)^{2i-2j} 2^j.
\end{align*}
Using that $(n-2j)^{n/2-j} = n^{n/2-j} \exp(-j+O(d^4/n))$, $(n(d-1)-4i+2j)^{n(d-1)/2-2i+j}=(n(d-1))^{n(d-1)/2-2i+j}\exp(-2i+j+O(d^4/n))$ and $(nd-4i)^{nd/2-2i}=(nd)^{nd/2-2i}\exp(-2i+O(d^4/n))$, we further simplify the above to obtain that
\[
\ex Z_j =\exp(1/4+O(d^4/n))\sqrt{2} \binom{i}{j} 2^j (d-1)^{n(d-1)/2-2i+j} d^{-n(d-2)/2} (d-2)^{2i-2j}.
\]
Summing over $0\le j\le i$ we obtain that for some function $g(n,d)$ which is independent of $i$,
\begin{eqnarray}
\ex Y &= & \sum_{0\le j\le i} \ex Z_j = \exp(1/4+O(d^4/n))\sqrt{2} (d-1)^{n(d-1)/2-2i}  d^{-n(d-2)/2} (d-2)^{2i} \left(1+\frac{2(d-1)}{(d-2)^2}\right)^i \nonumber\\
&=& (1+O(d^4/n)) g(n,d) \left(\frac{d^2-2d+2}{(d-1)^2}\right)^i \label{eq:FM}
\end{eqnarray}

The computation for $\ex Y^2$ is similar to~\cite{gao2023number}. 

Let $j,k,h,k,\eta$ be nonnegative integers such that $j+\ell+h +\eta/2\le i$. Fix two perfect matchings $H_1$ and $H_2$ over $[n]$ such that 
\begin{enumerate}[(P1)]
    \item there are exactly $j$ double edges covered by both $H_1$ and $H_2$;
    \item there are exactly $\ell$ double edges covered by $H_1$ but not $H_2$;
    \item there are exactly $h$ double edges covered by $H_2$ but not $H_1$;
    \item there are exactly $k$ edges in $H_1\cup H_2$ that are not part of any double edge;
    \item exactly $\eta$ out of the $k$ edges in (P4) are incident with an end vertex of a double edge.
\end{enumerate}

\begin{claim}\label{claim:pairs}
    The number of pairs $(H_1,H_2)$ satisfying (P1)--(P5) is
    \begin{align*}
    (e^{-1/2}+O(\xi))& 
    \binom{2i-2t}{\eta}(n-2i)_{\eta}\binom{n-2i-\eta}{2k-2\eta}\frac{(2k-2\eta)!}{2^{k-\eta}(k-\eta)!}\binom{i}{j,\ell,h} 4^j 2^{\ell+h} \\
    &\times\frac{(n-2j-2\ell-2k)!(n-2j-2h-2k)!}{(n/2-j-\ell-k)!(n/2-j-h-k)!2^{n-2j-\ell-h-2k}} ,
    \end{align*}
    where $\xi=d^2/n+(n-2i-2k)^{-1}$.
\end{claim}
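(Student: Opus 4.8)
The plan is to prove Claim~\ref{claim:pairs} by a direct constructive count, assembling each valid pair $(H_1,H_2)$ from building blocks classified by how the matching edges interact with the $i$ fixed double edges, and then extracting the asymptotic constant via the same generating-function computation used for~\cite[Lemma~17]{gao2023number} and in Claim~\ref{claim:pairs} earlier in this paper (the $-\tfrac12\log(1-z^2)$ function). Write $t=j+\ell+h$ for the number of double edges covered by at least one of $H_1,H_2$; the edges of $H_1$ and $H_2$ fall into (i) pairs inside the $j$ doubly-covered double edges, (ii) pairs inside the $\ell$ (resp.\ $h$) singly-covered double edges, (iii) the $k$ edges not lying in any double edge, of which $\eta$ touch an endpoint of a double edge and $k-\eta$ avoid all double-edge vertices entirely.

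First I would count the contributions of the double-edge-incident data. Choosing which double edges are covered and with which role contributes $\binom{i}{j,\ell,h}$; for each of the $j$ doubly-covered ones there are $2$ ways for $H_1$ to pick a pair and $2$ for $H_2$, giving $4^j$, and for each singly-covered one there are $2$ choices, giving $2^{\ell+h}$. For the $\eta$ matching edges (counted with multiplicity across $H_1\cup H_2$) that are incident to an endpoint of an \emph{uncovered} double edge: there are $2i-2t$ such available endpoints, and I would select an ordered-then-symmetrized list, accounting for the $\binom{2i-2t}{\eta}$ choice of endpoints and the $(n-2i)_\eta$ choice of their non-double-edge partners, and then the $\binom{n-2i-\eta}{2k-2\eta}\frac{(2k-2\eta)!}{2^{k-\eta}(k-\eta)!}$ factor counts the remaining $k-\eta$ shared edges among fully-generic vertices. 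The bookkeeping here is fiddly because an edge of $H_1\cup H_2$ incident to a double-edge endpoint may belong to $H_1$ only, $H_2$ only, or both, and (P5) counts edges, not (vertex,matching) incidences; I would set up the count so these multiplicities are absorbed cleanly, matching the exact combinatorial factors in the claim.

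Next comes the generic part: after removing all vertices used above, $H_1$ and $H_2$ restrict to two perfect matchings on disjoint-from-double-edges vertex sets of sizes $n-2j-2\ell-2k$ and $n-2j-2h-2k$ respectively, overlapping in no edges (all their common edges have already been accounted for), so these contribute exactly the product $\dfrac{(n-2j-2\ell-2k)!}{(n/2-j-\ell-k)!2^{n/2-j-\ell-k}}\cdot\dfrac{(n-2j-2h-2k)!}{(n/2-j-h-k)!2^{n/2-j-h-k}}$ of perfect-matching counts; combining the two powers of $2$ gives the stated $2^{-(n-2j-\ell-h-2k)}$. The final step is the asymptotic constant $e^{-1/2}+O(\xi)$: exactly as in~\cite[Lemma~17]{gao2023number}, the leading term overcounts configurations containing short cycles formed by alternating $H_1,H_2$ edges among the generic vertices, and a union-over-$n$ / generating-function argument with $F(z)=-\tfrac12\log(1-z^2)$ shows the correction factor is $e^{F(1)\text{-type constant}}=e^{-1/2}$, with error $O((n-2i-2k)^{-1})$ from truncating and $O(d^2/n)$ from the $i=O(d^2)$ approximations made in passing between the various factorial ratios (as in the earlier lemmas of this section).

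The main obstacle I expect is the $\eta$-bookkeeping in the second paragraph: getting the interaction between condition (P5), the possibility that a single geometric edge is shared by $H_1$ and $H_2$, and the ordered-vs-unordered choices of partners to collapse to precisely $\binom{2i-2t}{\eta}(n-2i)_\eta\binom{n-2i-\eta}{2k-2\eta}\frac{(2k-2\eta)!}{2^{k-\eta}(k-\eta)!}$ without an extra factor of $2$ or a missing binomial. Once that is pinned down, the rest is a routine repackaging of Stirling-type simplifications and the short-cycle correction already developed in~\cite{gao2023number} and reused in the proof of Lemma~\ref{lem:YPairing}.
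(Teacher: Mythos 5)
The paper states this claim with no proof at all, so there is nothing to compare against verbatim; your constructive skeleton (choose the roles of the $i$ double edges via $\binom{i}{j,\ell,h}$, the parallel-pair choices $4^j2^{\ell+h}$, the $\eta$ ``mixed'' edges and the $k-\eta$ generic ones, then two residual perfect matchings, then a constant correction) is indeed the natural and presumably intended argument, and your main-term factors match the claim. However, two points in your write-up are genuinely wrong or left unresolved. First, the constant: you invoke ``the $-\tfrac12\log(1-z^2)$ generating function'' from the paper's Section~4 claim, but that is precisely the function that yields \emph{no} $e^{-1/2}$ factor --- it allows alternating $2$-cycles, i.e.\ $H_1,H_2$ using distinct parallel pairs on the same vertex pair, which is possible in $\P_*(n,d)$ but impossible here, since in $\P^{(i)}(n,d)$ the pairing is simple outside the $i$ fixed double edges. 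The correct function is the one of \cite[Lemma 17]{gao2023number} with the extra $-z^2/2$ term; equivalently, the two residual matchings must share no edge, and the probability of that is $e^{-1/2}(1+O(\xi))$. This is not cosmetic: it is exactly the distinction the paper draws in its Section~4 proof, and it is what makes $\ex Y^2/(\ex Y)^2=1+\frac{1}{6d^3}+\cdots$ here rather than the larger $\P_*$-value $1+\frac{1}{4d^2}+\cdots$; taken literally, your cited function produces the constant $1$, not $e^{-1/2}$.

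Second, the $\eta$-bookkeeping you flag as the main obstacle does need to be resolved, and your hedge ``counted with multiplicity across $H_1\cup H_2$'' points the wrong way: despite the wording of (P4), the $k$ edges must be the \emph{common} edges of $H_1\cap H_2$ lying outside the double edges. This is forced both by the residual factor $\frac{(n-2j-2\ell-2k)!(n-2j-2h-2k)!}{(n/2-j-\ell-k)!(n/2-j-h-k)!2^{n-2j-\ell-h-2k}}$ (which is exactly the product of matching counts on the parts of $H_1$ and $H_2$ left after removing the covered double edges and the $2k$ vertices of common edges) and by the degree sequence $\bfg$ used right after the claim, where $\eta$ double-edge endpoints drop to $d-3$ and $2k-\eta$ generic vertices sit at $d-1$. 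Each of the $\eta$ edges joins an endpoint of an \emph{uncovered} double edge to a vertex outside $[2i]$; common edges joining two double-edge endpoints are simply absent from the main term and absorbed into the $O(d^2/n)$ error. Relatedly, your residual vertex sets are not ``disjoint from double edges'': they contain the $2(i-t)-\eta$ uncovered endpoints and the endpoints of the other matching's singly-covered double edges, so each residual matching must also avoid those forbidden pairs (another $1+O(d^2/n)$ factor). Consequently your sentence that the residual matchings ``overlap in no edges, so these contribute exactly the product of perfect-matching counts'' is self-contradictory: the product counts overlapping pairs as well, and removing them is precisely where the $e^{-1/2}$ comes from.
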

Fix a pair $(H_1,H_2)$ satisfying (P1)--(P5).  Then,
\[
\pr(H_1\cup H_2\subseteq \P')=\frac{N(\bfg, X\cup H_1\cup H_2)}{N(\bfg',X)},
\]
where, without loss of generality, with $t=j+\ell+h$,
\[
\bfg=(\underbrace{d-2,\ldots,d-2}_{2j}, \underbrace{d-3,\ldots,d-3}_{2\ell+2h},\underbrace{d-4,\ldots,d-4}_{2i-2t-\eta}, \underbrace{d-3,\ldots,d-3}_{\eta}, \underbrace{d-1,\ldots,d-1}_{2k-\eta},   \underbrace{d-2,\ldots,d-2}_{n-2i-2k+\eta}),
\]
\[
\bfg'=(\underbrace{d-2,\ldots,d-2}_{2i},\underbrace{d,\ldots,d}_{n-2i}).
\]
It is immediate that
\begin{align*}
    m(\bfg)=\frac{n(d-2)}{2}-2i+t+j+k,\quad (\text{which is independent of $\eta$}) 
\end{align*}
and assuming that $i=O(d^2)$ and letting $\alpha=\alpha(k):=2k/n$,
\begin{align*}
    \lambda(\bfg)=\frac{(d-2)(d-3)(1-\alpha)+(d-1)(d-2)\alpha}{2(d-2)+2\alpha}+O(d^3/n),
\end{align*}
and
\begin{align*}
    \mu(\bfg,X\cup H_1\cup H_2)=\frac{(1-\alpha)(d-2)^2+(d-1)^2\alpha/2}{d-2+\alpha}+O(d^3/n).
\end{align*}
Hence,
\begin{align*}
N(\bfg,X\cup H_1\cup H_2) = &\frac{(n(d-2)-4i+2t+2j+2k)!\exp\Big(-\lambda(\bfg)-\lambda(\bfg)^2-\mu(\bfg,X\cup H_1\cup H_2)\Big)}{(n(d-2)/2-2i+t+j+k)!2^{n(d-2)/2-2i+t+j+k}}\\
&\times \frac{1}{(d-2)!^{n-2i-2k+2j}(d-1)!^{2k}(d-4)!^{2i-2t})(d-3)!^{2\ell+2h}} \left(\frac{d-1}{d-3}\right)^{\eta}.
\end{align*}
It follows that $\pr(H_1\cup H_2\subseteq \P')=\rho_2(n,d,\alpha(k))$ where
\begin{align*}
\rho_2(n,d,\alpha(k))    =&\frac{(n(d-2)-4i+2t+2j+2k)!(nd/2-2i)!2^{n-j-t-k}}{(n(d-2)/2-2i+t+j+k)!(nd-4i)!}d^{n-2i} (d-1)^{n-2i-2k}(d-2)^{2i-2j}\\
    &\times (d-3)^{2i-2t}\left(\frac{d-1}{d-3}\right)^{\eta} \exp\left(\frac{(-d^2+5)\alpha^2+(-2d^2+14d-20)\alpha+4(d-2)^2}{4(d-2+\alpha)^2}+O(d^3/n)\right)\\
    =&  \left(\frac{2n(d-2)-8i+4t+4j+4k}{e}\right)^{n(d-2)/2-2i+t+j+k} \left(\frac{e}{2nd-8i}\right)^{nd/2-2i} \\
    &\times 2^{n-t-j-k} d^{n-2i} (d-1)^{n-2i-2k}(d-2)^{2i-2j}(d-3)^{2i-2t} \left(\frac{d-1}{d-3}\right)^{\eta}\\
    &\times\exp\left(\frac{(-d^2+5)\alpha^2+(-2d^2+14d-20)\alpha+4(d-2)^2}{4(d-2+\alpha)^2} +O(d^3/n)\right)\\
    =&  \left(\frac{2n(d-2)+4k}{e}\right)^{n(d-2)/2+k} (2n(d-2)+4k)^{-2i+t+j} \left(\frac{e}{2nd}\right)^{nd/2-2i}e^{2i} \\
    &\times 2^{n-t-j-k} d^{n-2i} (d-1)^{n-2i-2k}(d-2)^{2i-2j}(d-3)^{2i-2t} \left(\frac{d-1}{d-3}\right)^{\eta}\\
    &\times\exp\left(\frac{(-d^2+5)\alpha^2+(-2d^2+14d-20)\alpha+4(d-2)^2}{4(d-2+\alpha)^2} +O(d^3/n)\right).
\end{align*}


Following exactly the same approach as in~\cite[Lemma 18]{gao2023number}, 
and combining Claim~\ref{claim:pairs},
we found that
\begin{equation}
\ex Y^2=\left(1+O\left(\xi\right)\right) \sqrt{\frac{2\pi}{\bar\delta}} \cdot \Sigma, \label{eq:2nd}
\end{equation}
where $\bar \alpha=1/d$, $\bar k=\lfloor\bar\alpha n/2\rfloor$, and $\bar\delta=\frac{2d}{n}\frac{d(d-2)}{(d-1)^2}$, $\xi=\sqrt{\frac{d}{n}}\log^3 n+\frac{d^4}{n}$,
and
\begin{align*}
\Sigma = & \sum_{j,\ell,h,\eta} e^{-1/2} 
    \binom{2i-2t}{\eta}(n-2i)_{\eta}\binom{n-2i-\eta}{2\bar k-2\eta}\frac{(2\bar k-2\eta)!}{2^{\bar k-\eta}(\bar k-\eta)!}\binom{i}{j,\ell,h} 4^j 2^{\ell+h} \\
    &\times\frac{(n-2j-2\ell-2\bar k)!(n-2j-2h-2\bar k)!}{(n/2-j-\ell-\bar k)!(n/2-j-h-\bar k)!2^{n-2j-\ell-h-2\bar k}}  \rho_2(n,d,\alpha(\bar k))\\
    =& \sum_{j,\ell,h,\eta} e^{-1/2} 
    \binom{2i-2t}{\eta}(n-2i)_{\eta}\binom{n-2i-\eta}{2\bar k-2\eta}\frac{(2\bar k-2\eta)!}{2^{\bar k-\eta}(\bar k-\eta)!}\binom{i}{j,\ell,h} 4^j 2^{\ell+h} \\
    &\times 2 \left(\frac{n-2\bar k}{e}\right)^{n-2\bar k -t-j} e^{-t-j}  \rho_2(n,d,\alpha(\bar k)) (1+O(\xi)).
\end{align*}

First, considering the
summation over $\eta$, we obtain
\begin{align*}
\sum_{\eta} &\binom{2i-2t}{\eta}(n-2i)_{\eta}\binom{n-2i-\eta}{2\bar k-2\eta}\frac{(2\bar k-2\eta)!}{2^{\bar k-\eta}(\bar k-\eta)!}\left(\frac{d-1}{d-3}\right)^{\eta}\\
&= (1+O(\xi)) \frac{(n-2i)_{2\bar k}}{2^{\bar k} \bar k!} \left(1+\frac{\bar \alpha(d-1)}{(1-\bar \alpha)(d-3)}\right)^{2i-2t} \\
&=(1+O(\xi)) \frac{(n-2i)!}{2^{\bar k} \bar k!\sqrt{2\pi(n-2i-2\bar k)}}\left(\frac{e}{n-2i-2\bar k}\right)^{n-2i-2\bar k} \left(\frac{d-2}{d-3}\right)^{2i-2t}\\
&=(1+O(\xi)) \frac{n!}{2^{\bar k } \bar k! n^{2i} \sqrt{2\pi(n-2\bar k)}}\left(\frac{e}{n-2\bar k}\right)^{n-2i-2\bar k} e^{2i} \left(\frac{d-2}{d-3}\right)^{2i-2t}
\end{align*}
Next, by summing over $j,\ell,h$, we obtain that for some function $f(n,d)$ which is independent of $n$,
\begin{align*}
\ex Y^2 = & (1+O(\xi)) f(n,d)  n^{-2i}  \left(\frac{e}{n-2\bar k}\right)^{-2i} e^{2i}  \left(\frac{e}{2nd}\right)^{-2i} e^{2i}  d^{-2i} (d-1)^{-2i} \cdot {\sum}^1,
\end{align*}
where, by noting that $2n(d-2)+4\bar k=nd(d-1)^2/d$, and $n-2\bar k=n(d-1)/d$,
\begin{align*}
    {\sum}^1 = &\sum_{j,\ell,h}  \binom{i}{j,\ell,h} 4^j 2^{\ell+h} (2n(d-1)^2/d)^{-2i+t+j}  2^{-t-j} (d-2)^{2i-2j}(d-3)^{2i-2t}\left(\frac{n(d-1)/d}{e}\right)^{-t-j} e^{-t-j} \\
&\times \left(\frac{d-2}{d-3}\right)^{2i-2t}\\
=&\left(1+g_j+g_{\ell}+g_h\right)^i (2n(d-1)^2/d)^{-2i}(d-2)^{4i},
\end{align*}
where
\begin{align*}
    g_j&=\frac{4 (2n(d-1)^2/d)^2}{(d-2)^2(d-3)^2(n(d-1)/d)^2\cdot 4 ((d-2)/(d-3))^2}=\frac{4(d-1)^2}{(d-2)^4}\\
    g_{\ell}=g_h& =\frac{2(2n(d-1)^2/d)}{(d-3)^2(n(d-1)/d)\cdot 2 ((d-2)/(d-3))^2}=\frac{2(d-1)}{(d-2)^2}.
\end{align*}
Hence, $${\sum}^1=\left(1+\frac{4(d-1)^2}{(d-2)^4}+\frac{4(d-1)}{(d-2)^2}\right)^i \left(\frac{d(d-2)^2}{2n(d-1)^2}\right)^{2i} = \left(\frac{d(d^2-2d+2)}{2n(d-1)^2}\right)^{2i} .$$
Thus,
\begin{align*}
    \ex Y^2=(1+O(\xi)) f(n,d) \left(\frac{d^2-2d+2}{(d-1)^2}\right)^{2i}.
\end{align*}
By~\cite[Theorem 10]{gao2023number} (corrresponding to the case that $i=0$) and~\eqref{eq:FM}, $f(n,d)=(1+1/6d^3+O(d^{-4}+\xi))g(n,d)^2$. 
Hence, $\ex Y^2=(1+1/6d^3+O(d^{-4}+\xi)) (\ex Y)^2$ as desired.

The treatment for $Y_a$ is very similar, and we briefly sketch the argument. Let $\xi$ and $Z_a$ be defined as before.
\begin{align*}
    \ex Y_a = & \ex Z_{a} =\exp(1/4+O(d^4/n))\sqrt{2} \binom{i}{a} 2^{a} (d-1)^{n(d-1)/2-2i+a} d^{-n(d-2)/2} (d-2)^{2i-2a} \\
    =& (1+O(\xi)) g_i(n,d) \binom{i}{a} \left(\frac{2(d-1)}{(d-2)^2}\right)^a.
\end{align*}
where $g_i$ is a function of $n,d,i$ that is independent of $a$.

For the second moment of $Y_a$, consider pairs $(H_1,H_2)$ satisfying (P1)--(P5) as before, but necessarily with $\ell=h=a-j$. Hence,
\begin{align*}
    \ex Y_a^2=(1+O(\xi)) f_i(n,d) \cdot {\sum}^2,
\end{align*}
where
\begin{align*}
{\sum}^2= & \sum_{j}  \binom{i}{j,a-j,a-j} 4^j 2^{2a-2j} (2n(d-1)^2/d)^{2a}  2^{-2a} (d-2)^{-2j}(d-3)^{-2(2a-j)}\left(\frac{n(d-1)/d}{e}\right)^{-2a}\\
&\times e^{-2a}  \left(\frac{d-2}{d-3}\right)^{-2(2a-j)}=\left(\frac{2(d-1)}{(d-2)^2}\right)^{2a} \sum_{j}  \binom{i}{j,a-j,a-j} =\binom{i}{a}^2\left(\frac{2(d-1)}{(d-2)^2}\right)^{2a}.
\end{align*}
By Lemma~\ref{lem:YPairing} (which corresponds to the case that $a=i$), $f_i(n,d)=(1+1/6d^3+O(d^{-4}+\xi))g_i(n,d)^2$. Consequently, $\ex Y_a^2=(1+1/6d^3+O(d^{-4}+\xi))(\ex Y_a)^2$, as desired.
 \qed

The lemma immediately yields the following corollary.

\begin{corollary}
\label{cor2:YPairing} Let $i,j=O(d^2)$, $j\le i$. Let $Y_j$ be the number of perfect matchings $M$ of $P\sim\P^{(i)}(n,d)$ such that $P-M$ has exactly $j$ double edges.
Then, with probability $1-O(\sqrt{\log d}/d^{1.5})$,
\[
Y_j=(1+O(\sqrt{\log d}/d^{-1.5}))\ex Y.
\]
\end{corollary}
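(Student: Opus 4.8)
The plan is to read the corollary off from the second moment estimate of Lemma~\ref{lem2:YPairing} via Chebyshev's inequality, and then, when the sharp exceptional probability is wanted, to sharpen it by the projection-and-switching device already used in the proof of Theorem~\ref{thm:P-P}. (Recall that $Y_j$ here is the variable $Y_a$ of Lemma~\ref{lem2:YPairing} with $a=i-j$.) That lemma gives $\Var Y_j=\big(\tfrac{1}{6d^3}+O(d^{-4}+\xi)\big)(\ex Y_j)^2$ with $\xi=d^3/n+\sqrt{d/n}\log^3 n$; the point is that in $\P^{(i)}(n,d)$ the double-edge count $X$ is frozen at $i$, so it contributes no fluctuation, and the variance is only of order $(\ex Y_j)^2/d^3$ rather than the $(\ex Y)^2/d^2$ order seen in $\P_*(n,d)$ (the latter being exactly the contribution of the fluctuating number of double edges). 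Chebyshev then already yields $Y_j=(1+O(d^{-1}))\ex Y_j$ with probability $1-O(d^{-1})$. To recast this in the printed form I would combine it with the identity $\ex Y_j=(1+O(\xi))\binom{i}{i-j}q^{\,i-j}(1+q)^{-i}\ex Y$, where $q=\tfrac{2(d-1)}{(d-2)^2}$, read off directly from the formulas for $\ex Y_a$ and $\ex Y=\sum_a\ex Y_a$ derived inside the proof of Lemma~\ref{lem2:YPairing}.

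To obtain the exceptional probability $O(\sqrt{\log d}/d^{1.5})$, I would imitate the proof of Theorem~\ref{thm:P-P} almost verbatim, now with the triangle count $W$ --- and, in order to absorb the $O(d^{-4})$ contribution, also the number $C_4$ of $4$-cycles --- playing the role of the pair $(X,W)$ there. Put $Y_j^*=aW+a'C_4+c$ with $a=\Cov(W,Y_j)/\Var W$, $a'=\Cov(C_4,Y_j)/\Var C_4$ and $c$ chosen so that $\ex Y_j^*=\ex Y_j$. The covariance estimates of Lemma~\ref{lem:covariance}, carried out inside $\P^{(i)}(n,d)$ (where conditioning on the $i$ fixed double edges shifts the relevant moments only by $O(d^3/n)$-type terms), should give the residual bound $\Var(Y_j-Y_j^*)=O(d^{-5}+\xi)(\ex Y_j)^2$, so that Markov's inequality controls $|Y_j-Y_j^*|$ off an event of probability $O(\sqrt{\log d}/d^{1.5})$. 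Since $a\asymp\ex Y_j/d^3$ and $\ex W\asymp d^3$, the switching concentration bound of Lemma~\ref{lem:concentrationXW}(b), together with its $4$-cycle analogue (which follows from the same switching argument), gives $|Y_j^*-\ex Y_j^*|=O(\sqrt{\log d}/d^{1.5})\ex Y_j$ off an event of probability $O(d^{-2})$; combining the two estimates completes the proof.

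The step I expect to be the actual work is the bookkeeping. One must verify that passing from $\P_*(n,d)$ to $\P^{(i)}(n,d)$ perturbs $\ex W$, $\Var W$, $\Cov(W,Y_j)$ and the corresponding $4$-cycle quantities only by amounts already hidden in $\xi$; here $\Cov(W,Y_j)$ and $\Cov(C_4,Y_j)$ are not literally contained in Lemma~\ref{lem:covariance}, which treats only $\Cov(W,Y)$, so these must be recomputed. One must also decide precisely how many cycle statistics have to be projected out so that the residual variance is small enough for Markov's inequality to deliver $O(\sqrt{\log d}/d^{1.5})$, rather than the cruder $O(1/(d\log d))$ that results from projecting out triangles alone. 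Once these estimates are assembled, the argument is a routine repackaging of the proof of Theorem~\ref{thm:P-P}.
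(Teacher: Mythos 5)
Your overall route is the one the paper intends: the statement is the $Y_a$ of Lemma~\ref{lem2:YPairing} with $a=i-j$, and the concentration is meant to come from that second-moment estimate combined with the projection-onto-subgraph-counts and switching device of Lemmas~\ref{lem:covariance}--\ref{lem:concentrationXW} and the proof of Theorem~\ref{thm:P-P} (the paper itself gives nothing beyond ``the lemma immediately yields the corollary''). Your observations that the double-edge count is frozen in $\P^{(i)}(n,d)$, that the leading $\tfrac{1}{6d^3}$ term is exactly the triangle contribution, and that plain Chebyshev only gives $Y_j=(1+O(1/d))\ex Y_j$ off an event of probability $O(1/d)$, are all correct. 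One caveat on the restatement: the identity you cite, expressing $\ex Y_j$ as $\ex Y$ times a binomial-type factor, does not ``recast'' the result in the printed form, because that factor is generally far from $1$; the printed ``$\ex Y$'' (like the exponent ``$d^{-1.5}$'') has to be read as a typo for $\ex Y_j$, which is what you actually concentrate around and what the downstream couplings use.

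Where your proposal goes beyond the paper is the sharpening step, and that is also where the genuine gap sits. Projecting out triangles alone (the treatment the paper actually provides) leaves residual relative variance $O(d^{-4}+\xi)$, hence accuracy $O(\sqrt{\log d}/d^{1.5})$ but exceptional probability only $O(1/(d\log d))$; you correctly identify this and propose to also project out $4$-cycles to reach the stated $O(\sqrt{\log d}/d^{1.5})$ failure probability. But the estimate this hinges on, $\Var(Y_j-Y_j^*)=O(d^{-5}+\xi)(\ex Y_j)^2$, cannot be extracted from Lemma~\ref{lem2:YPairing}: its expansion stops at $1+\tfrac{1}{6d^3}$ with an error term $O(d^{-4})$, which is exactly the order you would need to identify and cancel. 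Making your plan rigorous requires redoing the second-moment computation to one further order, together with new estimates of $\Cov(C_4,Y_j)$, $\Var C_4$ and $\Cov(W,C_4)$ inside $\P^{(i)}(n,d)$ and a switching concentration bound for the $4$-cycle count --- none of which are in the paper and none of which you carry out, so the ``bookkeeping'' you defer is really a new computation rather than a repackaging of Theorem~\ref{thm:P-P}. As written, your argument proves the corollary only with exceptional probability $O(1/(d\log d))$ (or $O(1/d)$ from Chebyshev alone). To be fair, the paper's own one-line derivation supplies no more than this either, so your diagnosis of what the stated exponent actually requires is a useful observation --- but the proposal is not yet a proof of the bound as stated.
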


This yields the following theorem.
\begin{theorem}\label{thm:pairing-couple}
    Let $i=O(d^2)$. There exist a coupling between $P$ and $P'$ where $P$ is distributed as $\P_*(n,d)+\M_1^+$ conditional on having exactly $i$ double edges, and $P'\sim \P^{(i)}(n,d+1)$, and
    $
    \pr(P=P')=1-O(\sqrt{\log d}/d^{1.5}).
    $
\end{theorem}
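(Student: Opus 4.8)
The plan is to run the argument for Theorem~\ref{thm:P-P} inside the conditioned models. Set $T=\Omega(\P^{(i)}(n,d+1))$, let $S$ be the set of pairs $(\Pi,M)$ with $\Pi\in T$ and $M$ a perfect matching contained in the pairing $\Pi$ (a set of $n/2$ pairs of $\Pi$ saturating every vertex exactly once), and form the bipartite graph $D$ on $S\cup T$ by joining $(\Pi,M)$ to $\Pi'\in T$ precisely when $\Pi=\Pi'$. As in the proof of Theorem~\ref{thm:P-P}, the uniform distribution on $T$ is the law of $P'$, and the first-coordinate marginal of the uniform distribution on $S$ is the law of $P$: conditioning $\P_*(n,d)+\M_1^\plus$ on having exactly $i$ double edges puts on each $\Pi\in T$ a mass proportional to the number of ways of peeling a perfect matching off $\Pi$ (the residual pairing being then an arbitrary loopless pairing, i.e.\ an arbitrary element of $\Omega(\P_*(n,d))$), that is, proportional to $Y(\Pi)$, where $Y(\Pi)$ is the number of perfect matchings contained in $\Pi$ --- exactly the quantity $Y$ of Lemma~\ref{lem2:YPairing} with $d$ there replaced by $d+1$. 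The point-labelling bookkeeping needed to make this identification precise is the same as in Theorem~\ref{thm:P-P}, so I will not repeat it. Now every vertex of $S$ has $D$-degree $1$, hence $S_{\text{good}}=S$ for every $\eps$; on the other side $\deg_D(\Pi)=Y(\Pi)$ and $\abs D/\abs T=\E Y$, the expectation taken with respect to $\P^{(i)}(n,d+1)$.

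It follows by Corollary~\ref{cor:coupling2} that the theorem reduces to the one-sided concentration bound
\[
\pr_{\Pi\sim\P^{(i)}(n,d+1)}\bigl(Y(\Pi)<(1-\eps)\,\E Y\bigr)=O(\eps),\qquad \eps:=C\sqrt{\log d}\,/\,d^{1.5},
\]
for a suitably large constant $C$. Indeed, this bound gives $T_{\text{good}}=\{\Pi\in T:Y(\Pi)\ge(1-\eps)\E Y\}$ with $\abs{T\setminus T_{\text{good}}}\le\delta\abs T$ for some $\delta=O(\eps)$, and then Corollary~\ref{cor:coupling2} produces a coupling with $\pr(\Pi\ne\Pi')\le 2\delta+\eps/(1-\eps)=O(\sqrt{\log d}/d^{1.5})$; since $\{\Pi=\Pi'\}$ is exactly the event $\{P=P'\}$, we are done. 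The concentration bound is precisely what Corollary~\ref{cor2:YPairing} provides once one sums its estimates over the range of the auxiliary index (in the coupling the residual pairing may have any number of double edges, so the degree $\deg_D(\Pi)$ is the full count $Y(\Pi)=\sum_jY_j(\Pi)$, not a single $Y_j$); and it rests on Lemma~\ref{lem2:YPairing}. Concretely, one projects $Y$ in $L^2(\P^{(i)}(n,d+1))$ onto the span of $1$ and the counts of short cycles $C_3,C_4,\dots$; the second-moment estimate $\E Y^2=(1+\tfrac{1}{6d^3}+O(d^{-4}+\xi))(\E Y)^2$ of Lemma~\ref{lem2:YPairing}, combined with covariance estimates computed from Theorem~\ref{thm:bipartiteEdgeProb} just as in Lemma~\ref{lem:covariance}, shows that the residual $Y-Y^*$ has variance negligible on the scale $(\E Y)^2/d^3$, and that the dominant projection coefficient is the one against the triangle count, of order $d^{-3}\E Y$. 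Since $\E C_3\asymp d^3$ and each short-cycle count concentrates to within $O(\sqrt{\log d\cdot\E C_k})$ of its mean with sub-polynomial failure probability by a switching argument (cf.\ Lemma~\ref{lem:concentrationXW}), this yields $\abs{Y-\E Y}\le\eps\,\E Y$ outside an event of probability $o(\eps)$. The reason the relative error here is $\sqrt{\log d}/d^{1.5}$ rather than the $\sqrt{\log d}/d$ of Theorem~\ref{thm:P-P} is that in $\P^{(i)}$ the number of double edges is frozen at $i$ instead of being a fluctuating quantity of standard deviation $\asymp d$, so it no longer contributes to the fluctuation of $Y$.

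Thus the real work has already been done in Lemma~\ref{lem2:YPairing}, whose enumeration must be sharp enough to expose the $\tfrac{1}{6d^3}$ correction to $\E Y^2/(\E Y)^2$; given that lemma, the proof of Theorem~\ref{thm:pairing-couple} is the bipartite-graph set-up above together with an appeal to Corollary~\ref{cor:coupling2}, the only genuinely new observation being that $\deg_D(\Pi)$ is the total matching count $Y(\Pi)$. The main obstacle is to make the failure probability in the concentration estimate $o(\eps)$ and not merely $o(1)$: this needs both the sub-polynomial (rather than inverse-polynomial) tails of the switching-based concentration of the short-cycle counts, and a projection against enough short-cycle statistics that the remaining residual variance is pushed well below $(\log d)/d^3$ relative to $(\E Y)^2$.
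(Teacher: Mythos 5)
Your proposal is correct and is essentially the paper's own argument: the paper likewise builds the bipartite graph on pairs $(P,M)$ versus pairings in $\P^{(i)}(n,d+1)$ (adjacent when $P+M=P'$), notes every pair has degree one while the degree of $P'$ is its number of contained perfect matchings, and concludes via Corollary~\ref{cor:coupling2} together with the concentration of that matching count coming from Lemma~\ref{lem2:YPairing} (stated there as Corollary~\ref{cor2:YPairing}). Your observation that the relevant degree is the total count $Y(\Pi)=\sum_j Y_j(\Pi)$, and your sketch of the projection-onto-short-cycle-counts step needed to turn the second moment into the stated $O(\sqrt{\log d}/d^{1.5})$ bound, match (and slightly sharpen) what the paper leaves implicit.
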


\proof Construct bipartite grah $B$ on $(A,B)$ where vertices in $A$ are $(P,M)$ where $P\in \P_*(n,d)$, $M$ is a perfect matching on $[n]$ and $P+M$ has exactly $i$ double edges. The vertices in $B$ are pairings in $\P^{(i)}(n,d+1)$. $(P,M)$ and $P'$ are adjacent if $P+M=P'$. It turns out that every vertex in $A$ had degree equal to one, and by Corollary~\ref{cor2:YPairing}, a uniform random $P'\in B$ has degree $(1+O(\sqrt{\log d}/d^{1.5}))\ex Y$ with probability $1-O(\sqrt{\log d}/d^{1.5})$. The existence of the asserted coupling now follows by Corollary~\ref{cor:coupling2}.\qed   

Applying Theorem~\ref{thm:pairing-couple} twice, we obtain the following corollary.
\begin{corollary}
  \label{cor:2matchings}  
    \begin{enumerate}[(i)]
        \item  Let $i=O(d^2)$. There exist a coupling between $P$ and $P'$ where $P$ is distributed as $\P_*(n,d)+\M_2^+$ having exactly $i$ double edges, and $P'\sim \P^{(i)}(n,d+2)$, and
    $
    \pr(P=P')=1-O(\sqrt{\log d}/d^{1.5}).
    $ 
\item For every $i=O(d^2)$, 
$
\pr(\P_*(n,d)+M_1+M_2\ \text{has at most $i$ double edges})$ is at most  $\pr(\P_*(n,d+1)\ \text{has at most $i$ double edges}) +O(\sqrt{\log d}/d^{1.5})$.
    \end{enumerate}
\end{corollary}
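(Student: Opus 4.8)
The plan is to obtain part~(i) by composing two applications of Theorem~\ref{thm:pairing-couple} in a mildly refined form, and to deduce part~(ii) from the same setup together with a monotonicity property of the number of double edges. First I would record the refinement that the proof of Theorem~\ref{thm:pairing-couple} actually delivers: for $0\le \ell\le i=O(d^2)$ there is a coupling of $\P^{(\ell)}(n,d)+\M_1^\plus$ conditioned on the resulting pairing lying in $\P^{(i)}(n,d+1)$ with a uniform sample from $\P^{(i)}(n,d+1)$, under which the two agree with probability $1-O(\sqrt{\log d}/d^{1.5})$. This is the identical bipartite-graph argument: on the $\P^{(i)}(n,d+1)$ side the degree of a pairing $P'$ equals exactly the quantity $Y_\ell(P')$ of Corollary~\ref{cor2:YPairing} (the number of perfect sub-matchings $M$ of $P'$ with $P'-M$ having exactly $\ell$ double edges; note $P'-M$ is automatically loopless and, since $P'$ has no edge of multiplicity $\ge 3$, cannot acquire one either), so Corollary~\ref{cor2:YPairing} supplies the required concentration and Corollary~\ref{cor:coupling2} the coupling; summing over $\ell$ recovers the stated form of Theorem~\ref{thm:pairing-couple} with base $\P_*(n,d)$ restricted to its typical part $\bigsqcup_{\ell}\P^{(\ell)}(n,d)$ (of probability $1-O(d^4/n)$ by Lemma~\ref{lem:TypicalPairing}).

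For part~(i) I would generate $P\sim\P_*(n,d)+\M_2^\plus$ as $P=P_0+M_1+M_2$ with $P_0\sim\P_*(n,d)$ and $M_1,M_2$ independent uniform perfect matchings, and build the coupling in two stages. In the first stage I apply the refinement to $(P_0,M_1)$: on the event that $P_0+M_1$ has exactly $j$ double edges and nothing worse, replace $P_0+M_1$ by a sample $Q_1$ uniform on $\P^{(j)}(n,d+1)$, with $Q_1=P_0+M_1$ except with probability $O(\sqrt{\log d}/d^{1.5})$. Since $M_2$ is fresh, $Q_1$ is independent of $M_2$ and, conditioned on having exactly $j$ double edges, is genuinely uniform on $\P^{(j)}(n,d+1)$, so in the second stage I apply the refinement again (with $d$ replaced by $d+1$) to the pair $(Q_1,M_2)$: on the event that $Q_1+M_2$ has exactly $i$ double edges and nothing worse, replace it by a sample $Q_2$ uniform on $\P^{(i)}(n,d+2)$, with $Q_2=Q_1+M_2$ except with probability $O(\sqrt{\log d}/d^{1.5})$. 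Chaining the two stages and conditioning on $D(P)=i$ gives the asserted coupling: on the good event $Q_1=P_0+M_1$ and hence $Q_2=Q_1+M_2=P$, so $\pr(P\ne Q_2)$ is bounded by the two coupling errors plus the exceptional events, namely $O(\sqrt{\log d}/d^{1.5}+d^4/n)=O(\sqrt{\log d}/d^{1.5})$ in the range $d=o(n^{1/7}/\log n)$. The two facts that make the bookkeeping go through are: (a) deleting a perfect matching from a pairing with no edge of multiplicity $\ge 3$ cannot increase any edge-multiplicity, so conditioning on $D(P)=i$ and on $P$ having no edge of multiplicity $\ge 3$ (which costs only $O(d^4/n)$) automatically forces $D(P_0)\le D(P_0+M_1)\le i=O(d^2)$, so both invocations of the refinement lie in its admissible range and the intermediate ``nothing worse than a double edge'' requirements are implied by the final one; and (b) conditioned this way, $P$ is distributed as $\P_*(n,d)+\M_2^\plus$ conditioned on $D=i$, up to the same $O(d^4/n)$.

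Part~(ii) I would deduce as follows. Deleting $M_2$ turns $\P_*(n,d)+M_1+M_2$ into $\P_*(n,d)+M_1=\P_*(n,d)+\M_1^\plus$, and by the multiplicity-monotonicity above this deletion cannot increase the number of double edges unless $\P_*(n,d)+M_1+M_2$ has an edge of multiplicity $\ge 3$; hence $\pr(\P_*(n,d)+M_1+M_2\text{ has }\le i\text{ double edges})\le\pr(\P_*(n,d)+\M_1^\plus\text{ has }\le i\text{ double edges})+O(d^4/n)$. It remains to compare the double-edge count of $\P_*(n,d)+\M_1^\plus$ with that of $\P_*(n,d+1)$. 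For $P'\in\P^{(i)}(n,d+1)$ one has $\pr_{\P_*(n,d)+\M_1^\plus}(P')\propto Y(P')$, the number of perfect sub-matchings of $P'$ (any $P_0$ with $P_0+M=P'$ being automatically a valid $\P_*(n,d)$ pairing), so $\pr_{\P_*(n,d)+\M_1^\plus}(D=i)$ is proportional to $|\P^{(i)}(n,d+1)|\cdot\ex_{\P^{(i)}(n,d+1)}[Y]$, whereas $\pr_{\P_*(n,d+1)}(D=i)$ is proportional to $|\P^{(i)}(n,d+1)|$. By \eqref{eq:FM} with $d$ replaced by $d+1$, $\ex_{\P^{(i)}(n,d+1)}[Y]=(1+O(d^4/n))\,h(n,d)(1+1/d^2)^{i}$ for a function $h$ independent of $i$, which is genuinely increasing in $i$ once $d^4/n=o(1/d^2)$, i.e. throughout our range of $d$; therefore the law of the number of double edges of $\P_*(n,d)+\M_1^\plus$ stochastically dominates that of $\P_*(n,d+1)$, and $\pr(\P_*(n,d)+\M_1^\plus\text{ has }\le i\text{ double edges})\le\pr(\P_*(n,d+1)\text{ has }\le i\text{ double edges})$. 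Combining with the previous display and using $O(d^4/n)=O(\sqrt{\log d}/d^{1.5})$ gives part~(ii).

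The step I expect to be most delicate is the bookkeeping in part~(i): making the two-stage composition rigorous requires carrying the conditioning on the number of double edges through each superposition, verifying that the relevant indices stay $O(d^2)$ so that Corollary~\ref{cor2:YPairing} applies at both stages, and absorbing the ``multiplicity $\ge 3$'' exceptional events into the $O(\sqrt{\log d}/d^{1.5})$ error; everything else is routine given the tools already assembled.
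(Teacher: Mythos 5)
Your proposal follows the paper's own route: the paper proves this corollary by nothing more than a two-fold application of Theorem~\ref{thm:pairing-couple}, and your part (i) is exactly that, carried out with more care. Your classwise refinement---running the coupling with the base conditioned to a fixed class $\P^{(\ell)}$, so that the second application only needs the stage-one output to be uniform on each class $\P^{(j)}(n,d+1)$ rather than on all of $\P_*(n,d+1)$---addresses a point the paper's one-line proof silently skips, and it is justified exactly as you say by Corollary~\ref{cor2:YPairing} together with Corollary~\ref{cor:coupling2}. For part (ii) the paper gives no separate argument at all; your deduction via monotonicity of the double-edge count under deletion of $M_2$ (up to the $O(d^4/n)$ probability of an edge of multiplicity at least $3$), combined with the likelihood ratio $\propto(1+1/d^2)^i$ supplied by~\eqref{eq:FM} and hence stochastic domination of the double-edge count of $\P_*(n,d)+\M_1^\plus$ over that of $\P_*(n,d+1)$, is a sound and essentially self-contained route, and in fact yields the stronger additive error $O(d^4/n)$. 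One caveat, which you flag yourself: the statement of part (i) concerns $P$ conditioned on having exactly $i$ double edges, whereas your stage-one disagreement probability is controlled only conditionally on the intermediate classes $(\ell,j)$; pushing it through the conditioning on the final class requires either concentration (over a uniform class-$j$ pairing) of the number of matchings whose addition produces exactly $i$ double edges, or a restriction to typical values of $i$. The paper's one-sentence proof glosses exactly the same point, and it is harmless for the way the corollary is used in the proof of Theorem~\ref{thm:P-P2}.
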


\subsection{Proof of Theorem~\ref{thm:P-P2}}
By standard switching argument, the limiting distribution of the number of double edges is $\po((d-1)^2/4)$. Let $i$ and $i'$ denote the number of double edges in $\P_*(n,d+1)$ and $\P_*(n,d+2)$ respectively. Then, there is a coupling such that $i\le i'$ with probability $1-O(\xi)$ where $\xi$. Next, we can couple $\P^{(i)}(n,d+1)$ and $\P^{(i')}(n,d+2)$ so that with probability at least $1-O(\sqrt{\log d}/d^{1.5})$, $\P^{(i)}(n,d+1)\subseteq \P^{(i')}(n,d+2)$ by Theorem~\ref{thm:pairing-couple}. 

By Corollary~\ref{cor:2matchings}, there is a coupling such that with probability at least $1-O(\sqrt{\log d}/d^{1.5})$,  $\P_*(n,d)+\M_2^+=\P_*(n,d+2)$. Combining the two couplings we obtain the required coupling in the theorem.\qed

\subsection{Proof of Theorem~\ref{thm:P-M}} 
Let $\eps>0$ be fixed. By Theorem~\ref{thm:P-P2} there exists $t=t(\eps)>0$ and a coupling such that 
\[
\P_*(n,d)\subseteq \P_*(n,t) + \M_{2(d-t)}^\plus
\]
with probability at least $1-\sum_{j=t}^{d-t}O(\sqrt{t}/t^{1.5}) >1-\eps$, provided that $t(\eps)$ is sufficiently large.

Let $\tau=\tau_n$ be chosen so that $\tau=\omega(1)$ and $\tau=o(d)$. By Theorem~\ref{thm:smalld}(a), there is a coupling such that a.a.s.\
\[
\P_*(n,t)\subseteq \M_{\tau}^\plus.
\]
Combining these two couplings we obtain a new coupling such that with probability at least $1-\eps-o(1)$,
\[
\P_*(n,d)\subseteq \M_{2(d-t)+\tau}^\plus \subseteq \M_{\tau(d)}^\plus
\]
for some $\tau(d)=(2+o(1))d$. \qed

\subsection{Proof of Corollary~\ref{thm:G-M}}

This follows immediately by Theorems~\ref{thm:P-G} and~\ref{thm:P-M}.\qed

\section{Proof of Theorem~\ref{thm:Gnp-M}}

Let $\vO(n,d)$ be the random digraph on vertex set $[n]$ where each vertex $v\in[n]$ picks a random set $N_v$ of $d$ vertices in $[n]\setminus\{v\}$ independently from other choices and adds an arc $vw$ for every $w\in N_v$. Let $\O(n,d)$ be the underlying undirected graph after removing edge orientations in $\vO(n,d)$ and replacing double edges by simple edges. This model is often called the $d$-out random graph in the literature. Let $\G(n,p)$ denote the usual binomial random graph and let $\vG(n,p)$ denote its directed counterpart. That is, $\vG(n,p)$ is a directed graph on vertex set $[n]$ with each possible directed edge $uv$ included with probability $p$ independently from all other choices.

\begin{proposition}\label{prop:out}
There is a continuous and strictly increasing function $f:[1,\infty)\to[0,\infty)$ with $f(1)=0$ and $\lim_{x\to\infty}f(x)=1/2$ such that the following holds for every $\eps>0$.
Let $p=x\log n/n$ with $x=x(n)\ge(1+\eps)$ and $d \le f(x) pn$. Then there is a coupling in which $\O(n,d)\subseteq\G(n,p)$ a.a.s.
\end{proposition}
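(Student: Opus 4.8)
The plan is to build the coupling edge by edge over the $d$ out-choices made by each vertex, comparing against a suitably large slice of $\G(n,p)$ reserved for that vertex, and to use Corollary~\ref{cor:coupling2} at each step. Concretely, split the $p$-probability of each potential edge into $d$ independent ``layers'' via $1-p = (1-q)^{d}$, so $q = 1-(1-p)^{1/d} = (1+o(1))\,p/d = (1+o(1))x\log n/(dn)$; write $\G(n,p)$ as the union $G_1 \cup \dots \cup G_d$ of independent copies of $\G(n,q)$. The goal is to embed, for each vertex $v$, its $j$-th out-neighbor choice into $G_j$. Expose the $d$-out graph $\vO(n,d)$ one layer at a time: at layer $j$, every vertex $v$ picks one uniformly random vertex from $[n]\setminus\{v\}$ (for simplicity allow repeats across layers and at the end delete orientations and multiplicities — this only shrinks $\O(n,d)$, so a coupling for the with-repetition version implies one for $\O(n,d)$). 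The coupling is then constructed layer by layer and, within a layer, vertex by vertex.

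The key step is the single-vertex single-layer coupling. Fix $j$ and fix a partial exposure of $G_j$ restricted to edges already used. For vertex $v$, let $T_v = N_{G_j}(v)\setminus(\{v\}\cup\{\text{vertices already matched to }v\text{ in earlier layers}\})$ be the set of $q$-neighbors of $v$ in $G_j$ not yet ``spent.'' We want to couple the uniform choice $X_v \in [n]\setminus\{v\}$ with the random set $T_v$ so that $X_v \in T_v$ whp, and then declare the edge $vX_v$ to belong to $\O(n,d)$-via-$G_j$. By a Chernoff bound, $|N_{G_j}(v)| \sim \Bin(n-1,q)$ is concentrated around $qn = (1+o(1))x\log n/d$; subtracting the at most $d$ vertices used in earlier layers still leaves $|T_v| = (1+o(1))x\log n/d$ whp, uniformly over $v$ and over the conditioning, as long as $d = O(\log n)$-ish — but more carefully we only need $|T_v|/(n-1)$ to dominate in the Strassen sense. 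Applying Corollary~\ref{cor:coupling2} with $S = \{v\}$-side being the uniform choice and $T$-side being conditioned on $G_j$, or more directly using that for a single uniform pick $X_v$ one has $\pr(X_v\notin T_v)\le \ex(1-|T_v|/(n-1)) + \pr(|T_v|\text{ small})$, the failure probability at vertex $v$ in layer $j$ is $1 - |T_v|/(n-1)$ plus a tail term. The crucial calculation is the union bound over all $nd$ vertex–layer pairs: we need $nd \cdot \bigl(1 - qn/n\bigr) = nd\cdot(1-q) \approx nd$ to be controlled — this does NOT go to zero, so a naive per-edge union bound fails, and one must instead argue that for a fixed $v$, the $d$ attempts in layers $1,\dots,d$ are (conditionally) independent enough that $v$ fails to pick a fresh $G_j$-neighbor in \emph{all} $d$ layers with probability roughly $\prod_{j=1}^d (1 - (1+o(1))x\log n/(dn))^{?}$...

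Here is where the function $f$ and the hypothesis $d \le f(x)pn$ enter, and this is the \textbf{main obstacle}. The point is that we should not insist that $v$'s $j$-th out-edge lands in $G_j$; rather, we have $d$ out-edges to place and $d$ independent graphs $G_1,\dots,G_d$, and we only need a \emph{system of distinct representatives}-type matching: a bijection between $v$'s out-edge slots and the layers such that slot $\sigma(j)$ lands in a fresh neighbor from $G_j$. Equivalently, $v$'s neighborhood $N_{\O(n,d)}(v)$, a uniform $d$-subset of $[n]\setminus\{v\}$ (after dedup), must be covered by $\bigcup_j N_{G_j}(v)$ with a rainbow matching. Since $|\bigcup_j N_{G_j}(v)| = (1+o(1))pn = (1+o(1))x\log n$ whp and we need to fit $d$ distinct targets, feasibility requires $d$ to be a bounded fraction of $x\log n = pn$, which is exactly $d \le f(x)pn$ for an appropriate $f$; by a Hall's-theorem / defect-version argument (Theorem~\ref{T:couplingNew} applied at the level of the whole vertex $v$, with $\Omega_X$ = uniform $d$-subsets and $\Omega_Y$ = the $d$-tuple $(N_{G_1}(v),\dots,N_{G_d}(v))$) the rainbow matching exists with probability $1 - o(1/n)$ precisely when $d/(pn)$ is below the threshold where, writing $x$ for the density parameter, a Poisson-occupancy computation gives the survival function; calling that threshold $f(x)$ yields $f(1)=0$ (at $p = \log n/n$ the neighborhoods are barely nonempty), $f$ continuous and strictly increasing, and $\lim_{x\to\infty}f(x)=1/2$ (as $x\to\infty$ the $d$ independent neighborhoods each look like $\sim pn$ uniform points, and a rainbow matching covering a uniform $d$-set persists up to $d \approx pn/2$ by a second-moment / Hall count on the bipartite incidence between slots and layers). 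Once the per-vertex failure probability is $o(1/n)$, a union bound over the $n$ vertices closes the argument. The routine pieces I would not belabor: the Chernoff concentration of all the $N_{G_j}(v)$; verifying the exact form of $f$ via the occupancy computation; and checking that conditioning on earlier layers/vertices, which only removes $O(d) = O(pn) = o(n)$ points, does not disturb any of the estimates. The delicate point to get right is making the layers genuinely independent — which forces the $1-p=(1-q)^d$ splitting and the bookkeeping that each $G_j$ is only queried at $v$'s $j$-th slot — and then phrasing the rainbow-matching feasibility as a single application of Strassen's theorem with deficiency so that the non-summable per-edge error is replaced by the summable per-vertex error.
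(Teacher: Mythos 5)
There is a genuine gap, and it sits exactly where the paper's proof does its main work. Your construction never passes to an \emph{oriented} version of $\G(n,p)$, and without that step the scheme does not reproduce the $\O(n,d)$ marginal. In $\vO(n,d)$ the choices of distinct vertices are independent; but if every vertex draws its $d$ targets from its neighbourhood in the same undirected graph (whether one $\G(n,p)$ or your $d$ layers $G_j\sim\G(n,q)$), the events ``$u$ picks $v$'' and ``$v$ picks $u$'' both hinge on the single undirected edge $uv$ and become positively correlated: the mutual-pick probability is of order $d^2/(pn^2)$ instead of $d^2/n^2$, so the expected number of coincident picks is a constant fraction of $dn$ rather than $O(d^2)$ --- a macroscopic distributional error, not a technicality. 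The paper's cure is to write $1-p=(1-\vec p)^2$ and work inside $\vG(n,\vec p)$ with $\vec p\sim p/2$, where out-neighbourhoods of distinct vertices \emph{are} independent; conditioning on out-degrees, a uniform $d$-subset of each out-neighbourhood is exactly a uniform $d$-subset of $[n]\setminus\{v\}$. This halving is also the true source of $\lim_{x\to\infty}f(x)=1/2$ (via the minimum out-degree function $f_1(x/2)\to1$), not a rainbow-matching second-moment threshold; indeed, once one conditions on out-degrees there is no need for the layer decomposition $1-p=(1-q)^d$ or the SDR/rainbow machinery at all, and your per-edge/per-layer discussion (demanding an unconditioned uniform pick to land in a sparse layer-neighbourhood) is the wrong coupling viewpoint. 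A smaller but real slip: ``allow repeats and delete multiplicities, this only shrinks $\O(n,d)$'' goes the wrong way --- the dedup of i.i.d.\ picks is stochastically smaller, so embedding it does not embed $\O(n,d)$; the correct fix (used in the paper) is to sample $d+1$ targets with replacement, note that a.a.s.\ at least $d$ are distinct, and subsample exactly $d$.

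The second gap is the regime $x$ close to $1$, which your plan of ``per-vertex failure $o(1/n)$ plus a union bound'' cannot reach and which occupies most of the paper's proof. When $p=(1+\eps)\log n/n$ with $\eps$ small, the half-density directed graph $\vG(n,\vec p)$ a.a.s.\ contains $n^{\Theta(\eps)}$ vertices of out-degree below $d$; for these vertices the natural per-vertex coupling fails with probability $1-o(1)$, so no choice of $f$ near $x=1$ rescues a per-vertex union bound, and the asserted ``Poisson-occupancy'' threshold computation does not address this. The paper instead splits vertices into good and bad according to $\vE_1$, gives bad vertices access to the \emph{full} density $p_2$ by orienting every $E_2$-edge with exactly one bad endpoint out of that endpoint, handles edges joining two bad vertices through a third graph of density $\sim p_2/2$, and reconciles the resulting density mismatch by sampling $d+1$ out-edges with replacement together with a binomial domination coupling. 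Some mechanism of this kind (or a substitute for it) is indispensable, and it is absent from your proposal.
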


\proof
The easiest case is when $x=pn/\log n \ge 2+\eps$ for $\eps>0$. Assume $x\ge 2.1$ for simplicity. Let $\vec G$ be distributed as $\vG(n,\vec p)$ with $\vec p = 1 - (1-p)^{1/2} \sim p/2$. Then, the undirected graph $G$ obtained from $\vec G$ by ignoring the orientation of the edges (and replacing double edges by simple edges) is distributed as $\G(n,p)$.
Since $\vec p \sim (x/2)\log n/n$ and $x/2\ge 1.05$, a standard first moment calculation (see~e.g.~Lemma~3.2 in~\cite{PPSS15}) shows that the minimum out-degree $\delta^+=\delta^+(\vec G)$ of $\vec G$ is a.a.s.\ at least $(1+o(1))f_1(x/2)(x/2)\log n$, where
\[
f_1(x) = - \frac{(x-1)/x}{W(-(x-1)/(xe))}
\]
and $W$ is the lower branch of the Lambert $W$ function.
Let $\mathcal F$ be such event.
Note that $f_1(x)$ is strictly increasing in $[1,\infty)$, $f_1(1)=0$ and $\lim_{x\to\infty}f(x)=1$.

Conditional on $\mathcal F$, by picking $d \le (1-\eps)f_1(x/2)(x/2)\log n \le \delta^+$ random out-neighbours of each vertex $v$ in $\vec G$ independently, we obtain a copy of $\vO(n,d)$, so $\vO(n,d) \subseteq \vG(n,\vec p)$ and, ignoring orientations, $\O(n,d) \subseteq \G(n,p)$.

Now, we consider the case $p=(1+\eps)\log n/n$, where $0<\eps\le\eps_0$ and $\eps_0$ is assumed to be sufficiently small for some inequalities to hold. Let $d\le \eps^2\log n$. We will and find a coupling in which a.a.s.\ $\O(n,d) \subseteq \G(n,p)$.
Let
\[
f_2(x) = \begin{cases}
\frac{\eps_0^2}{1.1^2}\frac{(x-1)^2}{x} & 1\le x\le 2.1
\\
f_1(x)/2 & x> 2.1.
\end{cases}
\]
Note $f_2(x)pn \le (x-1)^2\log n = \eps^2\log n$ for $x=1+\eps\le 1+\eps_0$, and $f_2(x)pn \le \eps_0^2\log n$ for $1+\eps_0 \le x\le 2.1$. Let $f:[1,\infty)\to[0,\infty)$ be any function with $f(1)=0$ and $f(x)<f_2(x)$ in $(1,\infty)$.
Then, by monotonicity and by the easy case above, the coupling holds for all $d\le f(x)pn$ and $p\ge (1+\eps)\log n/n$.
Since $\lim_{x\to\infty}f_2(x)=1/2$ and $f_2$ is continuous and strictly increasing in each of the two pieces of the domain, one can trivially build function $f$ to be continuous and strictly increasing on $[1,\infty)$ and satisfy $\lim_{x\to\infty}f(x)=1/2$. This function satisfies the conditions of the statement.

It only remains to find the promised coupling for $p=(1+\eps)\log n/n$ and $d\le \eps^2\log n$, where $0<\eps\le \eps_0$.
We build three random graphs independently: $\vE_1$,  $E_2$ and  $\vE_3$. Note that $\vE_1$ and $\vE_3$ are directed, while $E_2$ is undirected. $\vE_1$ is distributed as $\vG(n,\vec p_1)$ with $\vec p_1=1-(1-p_1)^{1/2}\sim p_1/2$ and $p_1=(\eps/2)\log n/n$.
$E_2$ is distributed as $\G(n,p_2)$ with $p_2=(1+\eps/2)\log n/n$.
$\vE_3$ is distributed as $\vG(n,\vec p_3)$ with $\vec p_3=1-(1-p_2)^{1/2}\sim p_2/2$.
Note that if $E_i$ ($i\in\{1,3\}$) is obtained from $\vE_i$ by forgetting the orientation of the edges, then it is distributed as $\G(n,p_i)$ with $p_1$ defined above and $p_3=p_2$. 

Given a vertex $v$ in an oriented hypergraph, its out-edges are the edges that have $v$ as a tail. Out-degrees, out-neighbourhoods and other related notions are defined in
the obvious way. We say $v\in[n]$ is good if its out-degree is at least $d$ in $\vE_1$.
Otherwise we call vertex $v$ bad.

\begin{lemma}\label{lem:lowdegree}
Let $d\sim\beta\log n$ and $p\sim\alpha\log n/n$.
The number of vertices in $\G(n,p)$ of degree at most $d$ is a.a.s.\ at most $n^{\theta+o(1)}$, where $\theta = 1 - \alpha + \beta\log(e\alpha/\beta)$. The same holds true for the number of vertices in $\vG(n,p)$ of out-degree at most $d$.
\end{lemma}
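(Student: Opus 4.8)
The plan is a first-moment (union-bound) argument resting on a precise estimate of a binomial lower tail. Fix a vertex $v$. In $\G(n,p)$ the degree of $v$ is distributed as $\Bin(n-1,p)$, and in $\vG(n,p)$ the out-degree of $v$ is \emph{also} $\Bin(n-1,p)$, since the $n-1$ potential out-arcs at $v$ are present independently. So in both cases it suffices to show that, with $q:=\Pr\big[\Bin(n-1,p)\le d\big]$, one has $q=n^{\theta-1+o(1)}$: then by linearity of expectation the expected number of vertices of degree (resp.\ out-degree) at most $d$ equals $nq=n^{\theta+o(1)}$, and Markov's inequality gives that for every fixed $\eps>0$ this count exceeds $n^{\theta+\eps}$ with probability $n^{\theta+o(1)-\theta-\eps}=o(1)$; letting $\eps\to0$ along a suitable sequence yields the claimed a.a.s.\ bound (and if $\theta<0$ the count, a nonnegative integer with vanishing mean, is $0$ a.a.s.).

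It remains to estimate $q$. I would first reduce to the regime $\beta<\alpha$, which is the only one used and the only one in which $\{\Bin(n-1,p)\le d\}$ is a genuine lower-tail event. The ratio $\Pr[\Bin(n-1,p)=k]/\Pr[\Bin(n-1,p)=k-1]=\tfrac{(n-k)p}{k(1-p)}$ is decreasing in $k$ and equals $(1+o(1))\alpha/\beta>1$ at $k=d$, so $\Pr[\Bin(n-1,p)=k]\le\Pr[\Bin(n-1,p)=d]$ for all $k\le d$; hence $\Pr[\Bin(n-1,p)=d]\le q\le (d+1)\Pr[\Bin(n-1,p)=d]=n^{o(1)}\Pr[\Bin(n-1,p)=d]$, and it is enough to evaluate the single term $\binom{n-1}{d}p^d(1-p)^{n-1-d}$.

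For this I would use Stirling together with $d\sim\beta\log n$, $p\sim\alpha\log n/n$, and $d^2/n=O(\log^2 n/n)\to0$: these give $\binom{n-1}{d}=(1+o(1))\,n^d/d!$, $d!=(d/e)^d n^{o(1)}$, $p^d=(\alpha\log n/n)^d n^{o(1)}$, and $(1-p)^{n-1-d}=\exp\big(-(1+o(1))np\big)=n^{-\alpha+o(1)}$. Multiplying,
\[
\Pr\big[\Bin(n-1,p)=d\big]=\Big(\frac{e\alpha\log n}{d}\Big)^{\!d}\,n^{-\alpha+o(1)}=\Big(\frac{e\alpha}{\beta}\Big)^{\!(1+o(1))\beta\log n}\,n^{-\alpha+o(1)}=n^{\beta\log(e\alpha/\beta)-\alpha+o(1)}=n^{\theta-1+o(1)},
\]
so $q=n^{\theta-1+o(1)}$, completing the argument; the directed case needs nothing extra.

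No step here poses a real difficulty. The only care required is the bookkeeping of the many $(1+o(1))^{d}=n^{o(1)}$ error factors, and the observation that the whole lower tail $\{\le d\}$ is controlled by its single largest term $\Pr[\Bin(n-1,p)=d]$ — valid precisely because $d$ lies below the mean $(1+o(1))\alpha\log n$, which is where the reduction to $\beta<\alpha$ is used.
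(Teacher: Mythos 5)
Your proposal is correct and follows essentially the same route as the paper: a first-moment bound on the expected number of low-degree (or low-out-degree) vertices, $n\Pr[\Bin(n-1,p)\le d]\le n\,(enp/d)^d e^{-pn}=n^{\theta+o(1)}$, followed by Markov's inequality. The only difference is cosmetic — you derive the binomial lower-tail estimate by hand (largest-term reduction plus Stirling, implicitly in the regime $\beta<\alpha$ where the lemma is applied), whereas the paper simply invokes the standard tail bound.
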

\proof
In either case, the number of such vertices is $\text{Bin}(n-1,p)$, so its expectation is at most
\[
n \sum_{k=0}^{\lfloor d\rfloor} \binom{n-1}{k}p^k(1-p)^{n-1-k} \le
(1+o(1)) n \left(\frac{enp}{d}\right)^de^{-pn} \sim
n^{\theta+o(1)}.
\]
The result follows immediately from Markov's inequality.
\qed

Note that with $\alpha=\eps/4$ and $\beta=\eps^2$ in Lemma~\ref{lem:lowdegree}, we get that $\theta = 1 - \eps/4 + \eps^2\log(e/(4\eps)) < 1-\eps/8$ (for sufficiently small $\eps_0$), so a.a.s\ there are at most $n^{1-\eps/8}$ bad vertices. We call this event $\mathcal F_1$.

Let $V_G$ be the set of good vertices and $V_B=[n]\setminus V_G$ the set of bad ones. Let $E_2'=\{e\in E_2 : |e\cap V_B|\le 1\}$, that is, the set of edges in $E_2$ that are incident with at most one bad vertex. We can orient each edge in $E_2'$ with exactly one bad vertex by making that bad vertex the tail, and edges with no bad vertices are oriented following any arbitrary deterministic rule. The resulting oriented hypergraph is denoted $\vec E_2'$.
Let $\mathcal F_2$ be the event that every bad vertex has degree at least $d+1$ in $E_2'$ (or equivalently out-degree at least $d+1$ in $\vec E_2'$). The reason why we ask degree $d+1$ instead of $d$ is technical and will become apparent later. Again by Lemma~\ref{lem:lowdegree} with $\alpha=1+\eps/2$ and $\beta=\eps^2$, we have that $\theta=-\eps/2+\eps^2\log(e(1+\eps/2))/\eps^2) < -\eps/4 <0$ (for sufficiently small $\eps_0$), so $\mathcal F_2$ holds a.a.s.

Then define $\vec E_3' = \{e\in \vec E_3 : |e\cap V_B| > 1\}$ and let $E_3'$ be the undirected version of $\vec E_3'$ resulting from ignoring edge orientations. Edges in $E_2'$ appear with probability $p_2$ among those potential edges containing at most one bad vertex. Edges in $E_3'$ appear with probability $1 - (1 - p_3)^2 = p_2$ among the other potential edges that are not elegible for $E_2'$. In other words, even though each one of $E_2'$ and $E_3'$ depend on the set of good vertices (and thus on $E_1$), $E_2'\cup E_3'$ is distributed as $\G(n,p_2)$ and is independent of $E_1$. Then $E_1 \cup E_2' \cup E_3'$ is distributed as $\G(n,p_1+p_2-p_1p_2)$ which can be trivially included inside of $\G(n,p)$ since $p_1 + p_2 - p_1p_2 \le p_1 + p_2 = p$.
Hence, all it remains to show is that in the event that $\mathcal F_1\cap \mathcal F_2$ holds, we can find a copy of $\vO(n,d)$ contained in $\vec E_1\cup\vec E_2'\cup\vec E_3'$, which after forgetting orientations we just showed is contained in $\G(n,p)$. We denote such copy of $\vO(n,d)$ by $\vO$. If $\mathcal F_1\cap \mathcal F_2$ fails, just pick $\vO$ independently from everything else so the coupling fails, but this occurs with probability $o(1)$.

Expose the out-degrees of $\vec E_1$ and the degrees of $E_2'$ (but nothing else), and suppose that $\mathcal F_1\cap \mathcal F_2$ holds. In particular, we know which vertices are good or bad. For every vertex $v\in[n]$, we want to pick a set of $d$ out-edges of $v$ uniformly at random and independently of all other choices. These will be the out-edges of $v$ in $\vO$. If $v$ is good, this is straightforward, since $v$ has out-degree $d_v\ge d$ in $E_1$ and all sets of $d_v$ out-neighbours are equally likely and independent of those of other vertices, so we can just pick a random subset of size $d$ among those.

If $v$ is bad, picking its $d$ out-edges in $\vO$ is trickier. We will be using edges in $\vec E_2'\cup\vec E_3'$ with tail $v$. In particular, we will only use edges in $\vec E_2'$ with exactly one bad vertex, and ignore those with no bad vertices. Recall that edges of type 2 were originally unoriented, but we oriented them in $\vec E_2'$ so that their only bad vertex (if they have one) is the tail. We will also use edges in $\vec E_3'$ that have tail $v$. Most bad vertices will not pick vertices from $\vec E_3'$, but some will. Note that the edge densities of $\vec E_2'$ and $\vec E_3'$ are different since each out-edge of $v$ with only one bad vertex appears in $\vec E_2'$ with probability $p_2$ (before conditioning on degrees of $E_2$) while each out-edge of $v$ with more bad vertices appears in $\vec E_3'$ with probability $p_3\sim p_2/2$. This is a delicate issue that complicates things and is possibly the crux of the argument. We cannot make the density of $\vec E_2'$ smaller a priori since we want to make sure that bad vertices have enough out-edges and we cannot make the density of $\vec E_3'$ larger since otherwise the coupling with $\G(n,p)$ would not work.

To make the argument slightly simpler, instead of having each bad vertex $v$ pick a set of $d$ distinct out-edges, we will have it sample $d+1$ out-edges with replacement from all possible edges from $v$ to $[n]\setminus \{v\}$. Of course, the same out-edge could be picked multiple times but with probability $1-O(\log^4/n)$ every bad edge picks at most one repeated pair of out-edges. Call this event $\mathcal F_3$. In other words, if $\mathcal F_3$ holds, then every bad edge will pick a set of either $d$ or $d+1$ different out-edges. 

Let us proceed to do this carefully. Let $v$ be a bad vertex. Let $X = X_v$ be the set of all possible out-edges of $v$ in $\vec E_2'$, i.e. edges with tail at $v$ and a good vertex as head. Note $x = |X| = |V_G| \sim n$. Let $Y = Y_v$ be the set of all possible out-edges of $v$ in $\vec E_3'$, i.e.~edges with tail at $v$ and head at another bad vertex. Clearly, $y = |Y| = n-1-x \le n^{-\eps/8}x$ by $\mathcal F_1$. Since we are also assuming that $\mathcal F_2$ holds, the out-degree of $v$ in $\vec E_2'$ is $s_v \ge d+1$, so the set $S = S_v$ of out-edges of $v$ in $\vec E_2'$ is a random subset of $X$ of size $s_v \ge d+1$. Let $S'$ be a random subset of $S$ of size exactly $d+1$ (which is still uniformly chosen among all subsets of $X$ of that size). Also, let $T = T_v$ be the set of out-edges of $v$ in $\vec E_3'$. Now $t = |T|$ is distributed as $\text{Bin}(y, \vec p_3)$. Our goal is to find a coupling $U\subseteq S' \cup T$ where $U = \{u_1,\ldots, u_{d+1}\}$ is obtained by uniformly and independently sampling $d+1$ random elements of $X\cup Y$ (where recall $X\cup Y$ is the set of all possible out-edges of $v$) with replacement (i.e.~$|U|\le d+1$ and could be strictly smaller if the same edge is sampled twice).

To do this, for each $i = 1,\ldots,d+1$, let $a_i$ be an integer in $[x+y] = \left[n-1\right]$ chosen uniformly at random (and independently of other choices). We can think of the edges in $X\cup Y$ as having distinct labels in $[x + y]$ so we pick $u_i$ by first picking its label $a_i$, and later revealing what edge corresponds to that label. However, all we know so far is that edges in $Y$ have labels in $[y]$ and edges in $X$ have labels in $[x + y] \setminus [y]$. The label of each individual edge is still undecided. The reason to do this is to see: 1) how many different edges are in $U$ after possible edge repetitions, and 2) how many of these fall in $X$ or $Y$ . If $a_i \le y$, that means that $u_i$ will land in $Y$ (which has probability $y/(x+y) \le n^{-\eps/8}$). Otherwise, it lands in $X$. Let $A = \{a_1,\ldots,a_{d+1}\}$, $A_X = A\setminus[y]$ and $A_Y = A\cap[y]$. Then $u = |U| = |A|$, $u_X = |U \cap X| = |A_X|$ and $u_Y = |U \cap Y| = |A_Y|$. Let $\ell$ be the number of indices $i = 1,\ldots,d+1$ such that $a_i\in[y]$ (or equivalently $u_i\in Y$). Clearly, $\ell$ has distribution $\text{Bin}(d+1, y/(x + y))$ and moreover $u_Y \le \ell$ by construction. If $u_Y \le t$, then we can assign the $u_Y$ labels in $A_Y$ to a random set of edges in $T$. Similarly, we can always assign the $u_X \le d+1 = |S'|$ labels in $A_X$ to a random set of edges in $S'$. Doing so, the $u_1,\ldots,u_{d+1}$ are i.i.d.\ uniformly chosen from $X\cup Y$ as desired. For all of this to work, it is enough to find a coupling in which $\ell\le t$, or in other words $\text{Bin}(d+1, y/(x + y)) \le \text{Bin}(y, \vec p_3)$. Ignoring $(1 + o(1))$ factors in the parameters, all we need is a coupling
\begin{equation}\label{eq:coupling}
\text{Bin}( \eps^2\log n, 1/n^{\eps/8} ) \le \text{Bin} \left( n^{1-\eps/8}, \frac{1 + \eps/2}{2} \frac{\log n}{n} \right). 
\end{equation}
The coupling in~\eqref{eq:coupling} exists (see~e.g.~Lemma~6 in~\cite{FP25} with $N = d+1 \sim \eps^2\log n$, $P = 1/n^{-\eps/8}$, $K = \frac{2}{1+\eps/2}\frac{n}{n^{\eps/8}\log n}$ and $L = \frac{1+\eps/2}{2\eps^2}$).

Summarizing, for each bad vertex $v$ we were able to sample $u_1,\ldots,u_{d+1}$ uniformly at random from the set of all $n-1$ out-edges from $v$ and indepdently from all other vertices in a way that $u_1,\ldots,u_{d+1}$ are also in $\vec E_2'\cup \vec E_3'$ (assuming $\mathcal F_1$ and $\mathcal F_2$). Also, conditional on $\mathcal F_3$, we have $|\{u_1,\ldots,u_{d+1}\}|\in\{d,d+1\}$ and thus we can sample a subset of $d$ out-edges from $v$ uniformly at random with the same properties. All these edges (after ignoring orientations) are in $E_2'\cup E_3'$ and thus in $\G(n,p)$.
This gives the desired coupling and completes the proof of the proposition.\qed

We now proceed to the proof of Theorem~\ref{thm:Gnp-M}. Let $p=x\log n/n$ and $d\le f(x)\log n$ for $x=x(n)\ge 1+\eps$ and function $f$ as in the statement of Proposition~\ref{prop:out}. Then there is a coupling in which $\O(n,d) \subseteq \G(n,p)$ a.a.s.\ In turn, we can always couple $\lfloor d/2\rfloor$ independent copies $\O_1,\ldots,\O_k$ of $\O(n,2)$ so that $\O_1\cup\cdots\cup\O_k \subseteq \O(n,d)$. It was shown by Frieze~\cite{Frieze86} that a.a.s.\ $\O(n,2)$ contains a perfect matching (for even $n$). As a result, it follows from Markov's inequality that a.a.s.\ all but a $o(1)$ fraction of the $\O_i$ contain a perfect matching. Pick one random perfect matching $M_i$ from each $\O_i$ that contains some. These perfect matchings are uniform and i.i.d.\ by construction. Hence, a.a.s.\ $\G(n,p)$ contains the union of $(1+o(1))d/2$ uniform independent perfect matchings. This finishes the proof of the theorem with $g(x)=f(x)/2$.\qed

\section{Proof of Theorem~\ref{thm:KM}}

By taking the graph complement, part (a) follows immediately from parts (b,c,d).
To prove parts (b,c,d) we consider two cases in terms of the pairity of $n$.

\subsection{Even $n$}
Suppose that $n$ is even. If $d=O(1)$, then let $\omega(1)=\tau=o(\log n)$ and  by Theorem~\ref{thm:smalld}(b), there is a coupling such that a.a.s.\ $\G(n,d)\subseteq \M_{\tau}^+$, which implies that $\G(n,d)\subseteq \M_{\tau}^\cup$. By Theorem~\ref{thm:Gnp-M}, there is a coupling such that a.a.s.\ $\M_{\tau}^\cup \subseteq \G(n,(1+\eps)\log n/n)$.

Suppose $\omega(1)=d=o(\log n)$. By Corollary~\ref{thm:G-M}, there exists a coupling such that a.a.s.\ $\G(n,d)\subseteq \M_{\tau(d)}^\cup$ for some $\tau(d)=(2+o(1))d$. By Theorem~\ref{thm:Gnp-M}, there is a coupling such that a.a.s.\ $\M_{\tau(d)}^\cup \subseteq \G(n,(1+\eps)\log n/n)$. Now part (c) follows by combining the above two cases.

For parts (b), the embedding of $\G(n,p_*)$ into $\G(n,d)$ was already shown in previous works, e.g.~\cite{kim2004sandwiching}. To embedd $\G(n,d)$ into $\G(n,p^*)$, we observe that there is a coupling such that $\G(n,d)\subseteq \M_{\tau(d)}^\cup$ for some $\tau(d)=(2+o(1))d$. Then, there is a coupling such that $\M_{\tau(d)}^\cup \subseteq \G(n,p^*)$ by Theorem~\ref{thm:Gnp-M}. 

Finally for part (d), the embedding of $\G(n,d)$ into $\G(n,p^*)$ is exactly the same as in part (b).

\subsection{Odd $n$}
Suppose that $n$ is odd. In this case, $d$ is necessarily even. Couple $\G(n,d)$ and $\G(n,p)$ as follows.

\begin{enumerate}[(i)]
    \item Couple $(G_1, G_2)$ where $G_1 \in \G(n-1, d-1)$, and $G_2 \in \G(n-1,p')$ where $p'=p^*$ in Theorem~\ref{thm:Gnp-M} for even $n-1$. In particular, in case (c), $p^*$ was chosen to satisfy the assertion for $\eps/3$.
    \item generate $\hat G$ by uniformly choosing $d$ vertices $x_1,… x_d$ in $G_1$, join them to the new vertex $n$, and then adding an independent uniform random perfect matching $M$ on the remaining $n-1-d$ vertices disjoint from $G_1$.
    \item Let $p=1-(1-p')(1-(1-\eps/3)\log n/n)$, and $D\sim \Bin(n-1,p)$. Let $z=\max\{0,D-d\}$.
    Construct $G^*$ by joining vertex $n$ with $x_1,…, x_d$, and $z$ other vertices uniformly chosen from $[n-1]\setminus \{x_1,\ldots,x_d\}$, and then taking the union with $G'$ where $G'\sim\G(n-1-d,(1+\eps/3)\log n/n)$ is independent with $D$ and the choices for the neighbours of vertex $n$.
    
\end{enumerate}

Now the existence of the coupling such that a.a.s.\ $\G(n,d)\subseteq \G(n,p)$ is implied by the following lemma, and the observation that $p\sim p'+(1+\eps/3)\log n/n$.
\begin{lemma}
\begin{enumerate}[(a)]
    \item There is a coupling $(\hat G, G_d)$ such that $G_d\sim \G(n,d)$ and $\pr(\hat G=G_d)=1-o(1)$.
    \item $G^* \sim \G(n,p')$.
    \item $(M,G’)$ can be coupled so that a.a.s. $M\subseteq G’$.
\end{enumerate}    
\end{lemma}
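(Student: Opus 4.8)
The plan is to verify the three marginal/containment statements that, together with the observation $p\sim p'+(1+\eps/3)\log n/n$, give the coupling $\G(n,d)\subseteq\G(n,p)$: combine the recursively obtained coupling of step~(i) ($G_1\subseteq G_2$ a.a.s.) with part~(c) ($M\subseteq G'$ a.a.s.) and the shared star at vertex $n$ to get $\hat G\subseteq G^*$ a.a.s., and then use part~(a) together with Strassen's theorem (Corollary~\ref{lem:coupling}) to replace $\hat G$ by a genuine copy of $\G(n,d)$.

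\emph{Part (c)} is immediate from Theorem~\ref{thm:Gnp-M}. The graph $M$ of step~(ii) is a uniform perfect matching on the $(n-1-d)$-element set $[n-1]\setminus\{x_1,\dots,x_d\}$, i.e.\ $M\sim\M_1^\cup$ on that set, while $G'\sim\G(n-1-d,(1+\eps/3)\log n/n)$; since $d=n^{o(1)}$ we have $(1+\eps/3)\log n/n\ge(1+\eps/4)\log(n-1-d)/(n-1-d)$ for $n$ large. Apply Theorem~\ref{thm:Gnp-M} with the number of matchings equal to $1$ — admissible since $g(x)>0$ for every $x>1$, so $1\le(g(x)+o(1))pn$ holds trivially — to obtain a coupling of $(M,G')$ with $\pr(M\subseteq G')=1-o(1)$.

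\emph{Part (b)} is a bookkeeping check that the construction of $G^*$ realizes a binomial sprinkling: on $[n-1]$ it is the superposition of an independent $G_2\sim\G(n-1,p')$ and an independent sprinkle at density $(1-\eps/3)\log n/n$, which by the identity $1-p=(1-p')(1-(1-\eps/3)\log n/n)$ is exactly $\G(n-1,p)$; and, conditionally on the degree of $n$ equalling any fixed $m\ge d$, the neighbourhood of $n$ is a uniformly random $m$-subset of $[n-1]$, exactly as in $\G(n,p)$ conditioned on that degree. The sole discrepancy is the truncation $z=\max\{0,D-d\}$, which forces $n$ to have degree exactly $d$ on the event $\{D<d\}$; in every regime of Theorem~\ref{thm:KM}(b,c,d) we have $pn\to\infty$ with $pn\ge(1+\Omega(1))d$ (and $pn\gg d$ in parts (c),(d)), so $\pr(\Bin(n-1,p)<d)$ is exponentially small, $d_{TV}(G^*,\G(n,p))=o(1)$, and Corollary~\ref{lem:coupling} absorbs this into an extra $o(1)$ in the final containment probability.

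\emph{Part (a)} is the heart of the argument. Condition on the common value $S=\{x_1,\dots,x_d\}$ of the neighbourhood of vertex $n$; by symmetry $S$ is a uniform random $d$-subset of $[n-1]$ in both $\hat G$ and $\G(n,d)$, so it suffices to compare the two conditional laws of $\hat H:=\hat G-n$. Under $\G(n,d)\mid N(n)=S$ this is uniform over the family $\G(\bfd_S)$ of simple graphs on $[n-1]$ with degree sequence $\bfd_S$ equal to $d-1$ on $S$ and $d$ off $S$. Under the construction, $\hat H=G_1\cup M$ with $G_1$ uniform over all $(d-1)$-regular graphs on $[n-1]$ and $M$ a uniform perfect matching on $[n-1]\setminus S$ resampled until edge-disjoint from $G_1$ (a conditioning of probability $1-O(d^2/n)$). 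Since every $H\in\G(\bfd_S)$ decomposes as $H=(H\setminus M')\cup M'$ for exactly $m_H$ perfect matchings $M'$ of the subgraph of $H$ induced on $[n-1]\setminus S$, and $H\setminus M'$ is then $(d-1)$-regular, a direct count (using that a uniform perfect matching on $[n-1]\setminus S$ avoids a fixed nearly-$(d-1)$-regular graph with probability concentrated up to a $1+O(d^2/n)$ factor) gives
\[
\pr\big(\hat H=H \mid N(n)=S\big)\;\propto\;m_H\,\big(1+O(d^2/n)\big).
\]
Writing $\bar m=\E_{H\sim\G(\bfd_S)}m_H$, it follows that $d_{TV}(\hat G,\G(n,d))=\tfrac12\,\E_{H\sim\G(\bfd_S)}\big|m_H/\bar m-1\big|+O(d^2/n)$, so the claim reduces to the concentration $m_H=(1+o(1))\bar m$ with probability $1-o(1)$; given this, Corollary~\ref{lem:coupling} furnishes the coupling with $\pr(\hat G=G_d)=1-o(1)$.

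The main obstacle is this concentration of $m_H$, which is also where the hypothesis $d=\omega(1)$ is essential: $m_H$ is, up to a sparse perturbation, the number of perfect matchings of a random $d$-regular graph on $\approx n$ vertices, so for bounded $d$ the ratio $m_H/\bar m$ has a non-degenerate limit governed by the short-cycle counts (the small-subgraph-conditioning phenomenon), whereas for $d\to\infty$ one expects $m_H/\bar m\to1$. I would prove $\E m_H^{2}=(1+o(1))(\E m_H)^{2}$ by the second-moment method, estimating the numbers of graphs in $\G(\bfd_S)$ containing one or two prescribed perfect matchings of the induced subgraph via McKay's enumeration formula (Theorem~\ref{thm:subgraphenum}), essentially reproducing the computation behind~\eqref{eq:conc2} and its simple-graph analogue from~\cite{gao2023number}; the degree sequence $\bfd_S$ differs from the $d$-regular one at only $O(d)$ vertices — and the induced subgraph from the $d$-regular one at a further $O(d^{2})$ vertices — which perturbs every exponent by $O(d^{2}/n)=o(1)$ and leaves the leading asymptotics intact. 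Combined with a switching tail bound of the type of Lemma~\ref{lem:concentrationXW} to exclude an anomalously small value, Chebyshev's inequality then yields the concentration. The only delicate steps are carrying these $O(d^{2}/n)$ corrections through the enumeration estimates without spoiling the $1+o(1)$ factors, and confirming that the regimes of Theorem~\ref{thm:KM} to which this construction is applied indeed have $d=\omega(1)$.
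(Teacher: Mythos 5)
Your parts (a) and (b) are in line with the paper: for (a) you take the same route as the paper's sketch (the conditional law of $\hat G-n$ given the star is proportional to the number $m_H$ of perfect matchings of the subgraph induced on the non-neighbours of $n$, concentration of $m_H$ via a second-moment/McKay computation in the style of Lemma~\ref{lem:YPairing}, plus near-constancy of the number of admissible choices of $M$ given $G_1$, then the coupling corollary), and (b) is the bookkeeping the paper dismisses as obvious.

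The genuine gap is in part (c), and it is tied to a factual error. In the construction, $M$ is a uniform perfect matching on $[n-1]\setminus\{x_1,\dots,x_d\}$ \emph{conditioned to be edge-disjoint from $G_1$} --- you use exactly this conditioning in part (a) (``resampled until edge-disjoint from $G_1$''), and without it $\hat G$ would typically fail to be simple, so (a) would be false. But the probability that an unconditioned uniform matching avoids the $(d-1)$-regular graph $G_1$ is $e^{-(d-1)/2+o(1)}$ (the expected overlap is about $(d-1)/2$), not $1-O(d^2/n)$ as you assert; hence the conditioned matching is not close in total variation to an unconditioned one in any regime with $d\ge 2$, and it is correlated with $G_1$. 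Consequently you cannot identify $M$ with $\M_1^\cup$ and invoke Theorem~\ref{thm:Gnp-M} with one matching: that theorem couples an unconditioned uniform matching into a $\G(n,p)$ that is independent of everything else, whereas here the law of $M$ depends on $G_1$, and the coupling of $(M,G')$ must be constructed conditionally on $G_1$ so that it composes with the coupling $G_1\subseteq G_2$ and the shared star into one joint coupling yielding $\hat G\subseteq G^*$. This is precisely why the paper argues through $G'\setminus G_1$: using $G_1\subseteq\G(n-1,p^*)$ it extracts a copy of $\G(n-1-d,\hat p)$ with $\hat p\ge(1+\eps/4)\log n/n$ inside $G'\setminus G_1$ and couples the $G_1$-avoiding matching there. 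Your part (c) skips the dependence on $G_1$ altogether, and the mistaken $1-O(d^2/n)$ estimate is what makes that shortcut look harmless. (A smaller point you flag but do not resolve: the second-moment concentration behind your part (a) needs $d=\omega(1)$, while the odd-$n$ construction is also used for bounded $d$ in Theorem~\ref{thm:KM}(c,d); there one would need a contiguity/small-subgraph-conditioning argument in the spirit of Theorem~\ref{thm:smalld} instead.)
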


\proof Part (b) is obvious.
Part (a) follows by a similar proof as Theorem~\ref{thm:P-G}, which we briefly sketch as follows. Note that for every realisation $G_1$ and $x_1,\ldots,x_d$, the number of choices of $M$ is highly concentrated. On the other hand, for every $d$-regular $G$ on $[n]$, the number of ways to specify $G_1$ and $x_1,\ldots,x_d$ is exactly the number of perfect matchings on the subgraph of $G$ inducced by $[n]\setminus N_G[\{n\})]$, which is concentrated following the similar proof as Lemma~\ref{lem:YPairing}. 

For part (c) it suffices to show that a.a.s.\ $G'\setminus G_1$ has a perfect matching. We already know that there is a coupling such that $G_1\subseteq \G(n-1,p^*)$. Hence, there is a coupling such that $\G(n-1-d, \hat p)\subseteq G'\setminus G_1$ a.a.s.\ where $\hat p\ge (1+\eps/4)\log n/n$, and thus a.a.s.\ $G'\setminus G_1$ has a perfect matching. \qed


\end{document}